\newtheorem{lemma}{Lemma}[section]
\newtheorem{proposition}[lemma]{Proposition}
\newtheorem{remark}[lemma]{Remark}
\newtheorem{theorem}{Theorem}
\newtheorem{corollary}[lemma]{Corollary}
\newcommand{\g}{{\gamma}}
\newcommand{\eps}{{\varepsilon}}
\newcommand{\C}{{\mathbb C}}
\newcommand{\R}{{\mathbb R}}
\newcommand{\Z}{{\mathbb Z}}
\newcommand{\RP}{{\mathbb {RP}}}
\renewcommand{\P}{\mathcal{P}}
\title{Introducing symplectic billiards}
\author{Peter Albers}
\author{Serge Tabachnikov}
\address{ Peter Albers\\
 Mathematisches Institut\\
 Ruprecht-Karls-Universit\"at Heidelberg\\
 Germany}
\email{palbers@{}mathi.uni-heidelberg.de}
\address{ Serge Tabachnikov\\
 Department of Mathematics\\
 Pennsylvania State University\\
 University Park, PA 16802\\
USA}
\email{tabachni@{}math.psu.edu}
\date{\today}
\begin{document}

\maketitle

\begin{abstract}
In this article we introduce a simple dynamical system called symplectic billiards. As opposed to usual/Birkhoff billiards, where length is the generating function, for symplectic billiards symplectic area is the generating function. We explore basic properties and exhibit several similarities, but also differences of symplectic billiards to Birkhoff billiards.
\end{abstract}

\section{Introduction} \label{intro}

Birkhoff billiard describes the motion of a free particle in a domain: when the particle hits the boundary, it reflects elastically so that the tangential component of its velocity remains the same and the normal component changes the sign. In the plane, this is the familiar law of geometric optics: the angle of incidence equals the angle of reflection, see Figure \ref{refl}.

 \begin{figure}[hbtp] 
\centering
\includegraphics[width=1.75in]{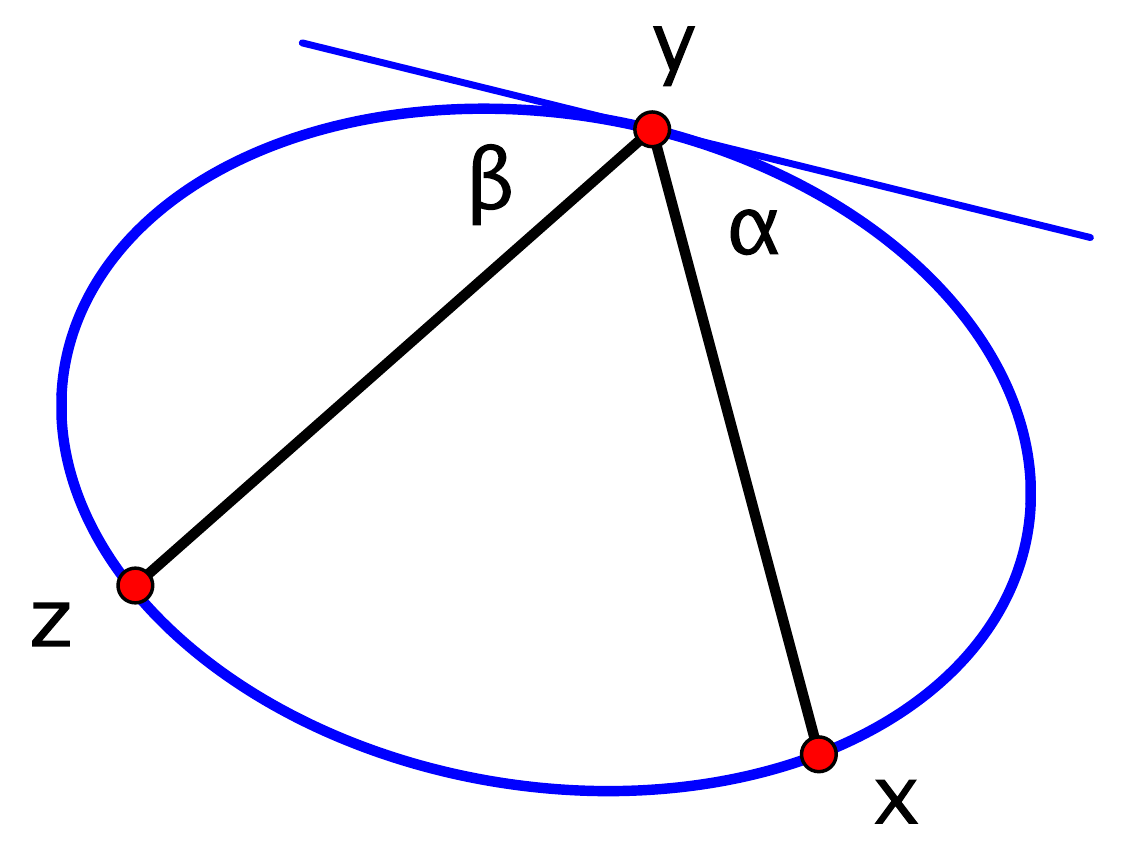}
\includegraphics[width=1.75in]{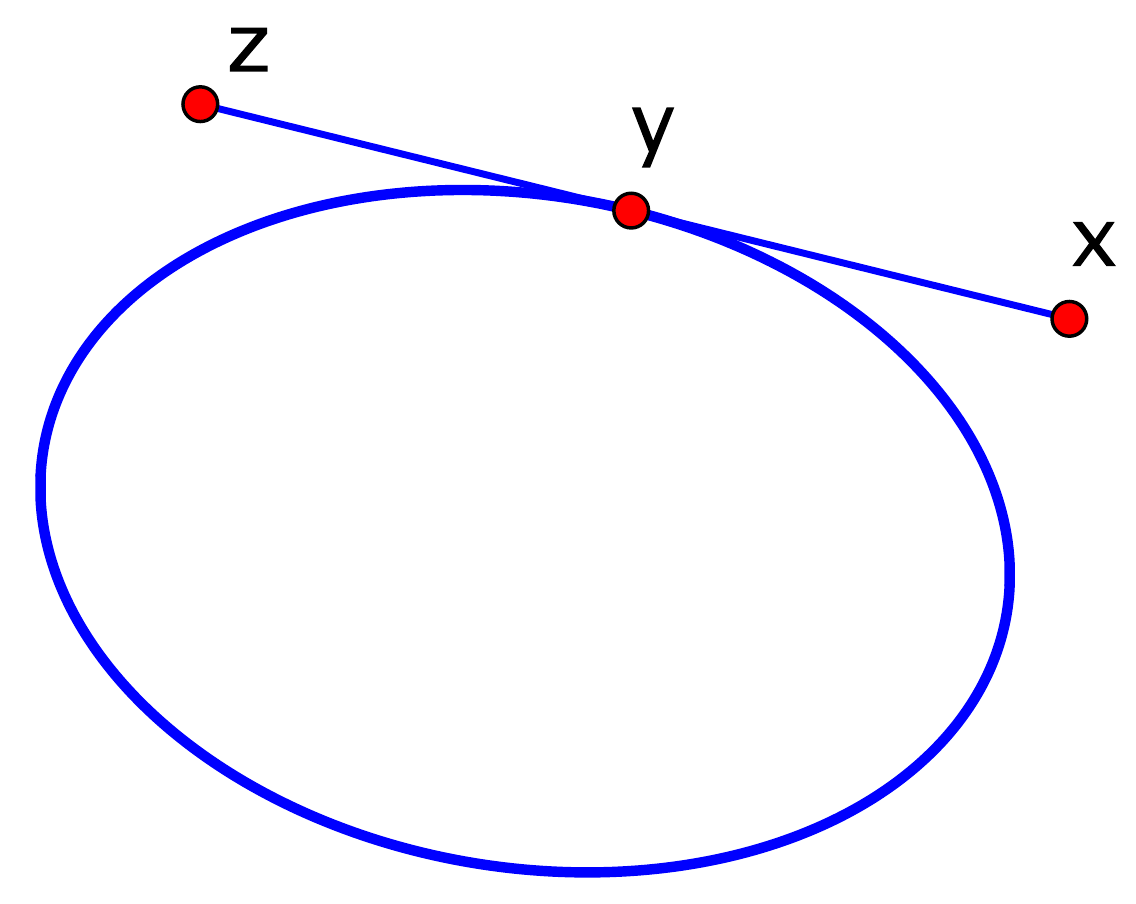}
\includegraphics[width=1.75in]{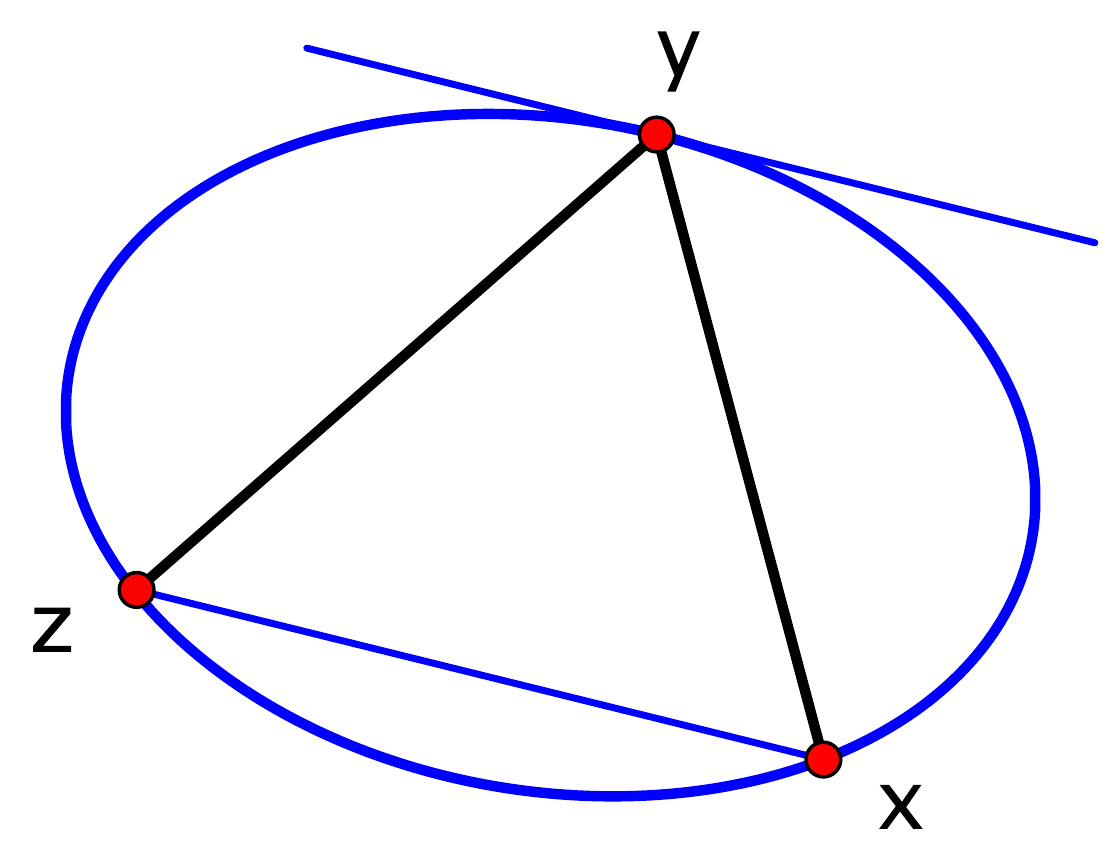}
\caption{Left, the usual billiard reflection: $xy$ reflects to $yz$ if $\alpha = \beta$. Middle, the outer billiard reflection: $x$ reflects to $z$ if $|xy|=|yz|$. Right, the symplectic billiard reflection: 
$xy$ reflects to $yz$ if $xz$ is parallel to the tangent line of the curve at point $y$.}
\label{refl}
\end{figure}

This reflection law has a variational formulation: if the points $x$ and $z$ are fixed, the position of the point $y$ on the billiard curve is determined by the condition that the length $|xy|+|yz|$ is extremal. In particular, periodic billiard trajectories are inscribed polygons of extremal perimeter length.

Another well-studied dynamical system is outer billiard, one such transformation in the exterior of a convex curve also depicted in Figure \ref{refl}. Outer billiard admits a variational formulation as well: periodic outer billiard orbits are circumscribed polygons of extremal area. 

It is natural to consider two other planar billiards: inner with area, and outer with the length as the generating functions. The variational formulation of the reflection depicted in Figure \ref{refl} on the right is as follows: if the points $x$ and $z$ are fixed, the position of the point $y$ on the billiard curve is determined by the condition that the area of the triangle $xyz$ is extremal. It would be natural to call this {\it area billiard} \footnote{A competing name would be {\it affine billiard} since this system commutes with affine transformations of the plane.}; due to higher-dimensional considerations, we prefer the term {\it symplectic billiard}.  

Analytically, the generating function of the symplectic billiard map is $\omega(x,y)$, where $\omega$ is the area form, as opposed to $|xy|$, the generating function of the usual billiard map. 

The aim of this paper is to introduce symplectic billiards and to make the first steps in their study. 
We believe that a system that has such a simple and natural definition should have interesting properties; in particular, one wants  to learn which familiar properties of the usual inner (Birkhoff) and outer billiards are specific to them, and which extend to the symplectic billiards and, perhaps, other similar systems. 

One can also define symplectic billiard in a convex domain with smooth boundary in a linear symplectic space $(\R^{2n},\omega)$. Let $M$ be its boundary. To define a reflection similarly to the planar case, one needs to choose a tangent direction at every point of $M$. This is canonically provided by the symplectic structure: the tangent line at $x\in M$ is the characteristic direction $\ker \omega|_{T_xM\times T_xM}$, that is, the kernel of the restriction of the symplectic form $\omega$ on the tangent hyperplane $T_x M$. (A similar idea is used in the definition of the multi-dimensional outer billiard, see, e.g., the survey \cite{DT}.) The generating function of this map is again $\omega(x,y)$, where $x,y \in M$.

This paper consists of three parts, the two-dimensional smooth and polygonal symplectic billiards, and multi-dimensional ones. Let us describe our main results.

In the planar case, we show that symplectic billiard is a monotone, area preserving twist map. In Theorem \ref{totar} we calculate the area of the phase space: it equals four times the area of the central symmetrization of the billiard table. 

Theorem \ref{Mathth} is a version of the well known theorem by Mather \cite{Ma}: if the billiard curve has a point of zero curvature, then the symplectic billiard possesses no caustics. Theorem \ref{Lazth} is a KAM theory result, an analog of Lazutkin's theorem \cite{Laz}: if the curve is sufficiently smooth and the curvature is everywhere positive, then the symplectic billiard has smooth caustics arbitrary close to the boundary. The coordinates in which the symplectic billiard map is a small perturbation of an integrable map are provided by the affine parameterization of the billiard curve. 

In Section \ref{ratcaust}, we apply the approach via exterior differential systems, introduced and implemented in the study of Birkhoff and outer billiards in \cite{BZ,BKNZ,GT,KZ,La,TZ,Tu}. It makes it possible to prove the existence of billiard tables that possess caustics consisting of periodic points. We remark that, for period 4, the billiard bounded by a Radon curve has this property. Another application of exterior differential systems is to small-period cases of the Ivrii conjecture which states that the set of periodic billiard orbits has zero measure (a weaker version says that this set has empty interior). Theorem \ref{Ivrth} asserts that  for planar symplectic billiards the set of 3-periodic points and that of 4-periodic points has empty interior.

In Section \ref{actspec}, we consider the area spectrum, that is, the set of areas of the polygons corresponding to the periodic orbits of the planar symplectic billiard. Let $A_n$ be the maximal area of an $n$-gon inscribed in the billiard curve. The theory of interpolating Hamiltonians \cite{Me,MM} implies an asymptotic series expansion of $A_n$ in negative even powers of $n$ whose first two terms are equal, up to numerical factors, to the area bounded by the billiard curve and its  affine length. Then the affine isoperimetric inequality implies that ellipses are uniquely determined by their area spectrum -- Theorem \ref{hear} (a similar result for outer billiards was obtained in \cite{Ta1}). 

We also show that symplectic billiards do not possess the finite blocking property (or, in different terminology, are insecure): for every pair of distinct points $x,y$ on the boundary curve $\g$ and every finite set $S$ inside $\gamma$, there exists a symplectic billiard trajectory from $x$ to $y$ that avoid $S$. See \cite{DG,Ta1} for insecurity of Birkhoff billiards.

Section \ref{polygons} concerns polygonal symplectic billiards. Theorems \ref{regpol} and \ref{trapezoid} assert that if the polygon is affine-regular or is a trapezoid, then all orbits are periodic, with explicitly described periods. In Propositions \ref{nbhd1} and \ref{nbhd2}, we describe two types of periodic orbits: those that appear in 2-parameter families (similarly to the periodic orbits of polygonal outer billiards), and the isolated, stable ones that survive small perturbations of the polygon. We do not know whether every convex polygon carries a periodic symplectic billiard orbit (this question is also open for the usual polygonal billiards,  in contrast with outer polygonal billiards where such a result is known).

After giving a careful definition of multi-dimensional symplectic billiard, we describe, in Section \ref{ellip}, the case of ellipsoids. As one may expect, this is a completely integrable case. Namely, in Theorem \ref{hodo}, we relate the symplectic billiard in an ellipsoid to the usual billiard in a different ellipsoid: given a trajectory of the symplectic billiard, construct a new polygonal line by connecting every second consecutive impact points (i.e, always skip one impact point); then a linear transformation (that depends only on the original ellipsoid) takes this polygonal line to a Birkhoff billiard trajectory on another ellipsoid (that again depends only on the original ellipsoid.) This result is analogous to a theorem of Moser and Veselov \cite{MV} that relates the discrete Neumann system with billiards in ellipsoids. We also explicitly describe the symplectic billiard dynamics inside a round sphere (Proposition \ref{superint}).

The last Section \ref{periodic} concerns periodic orbits in multi-dimensional symplectic billiards inside an arbitrary strictly convex closed smooth hypersurface. Theorem \ref{weak} asserts that, for every $k\ge 2$, there exist $k$-periodic trajectory. This is a weak result and, in Theorem \ref{smallper}, we obtain a stronger  one for small periods: 
the number of 3-periodic, and of 4-periodic, symplectic billiard orbits in $2n$-dimensional symplectic space is no less than $2n$ (the dihedral group $D_k$ acts on $k$-periodic trajectories by cyclic permutation of the vertices and reversing their order; what one counts are these $D_k$-orbits). 

We conclude this introduction with a remark concerning the interplay between convexity and symplectic geometry. Even though convexity is \textit{not} a symplectically invariant notion it has long been know that convex domains in symplectic space enjoy special rigidity properties. This culminated in Viterbo's conjecture \cite{Viterbo} which asserts an inequality between symplectic capacities and volume for \textit{convex} domains. It is well known that Viterbo's conjecture fails if the convexity assumption is dropped. And even though it has been proved in special cases in general, it is considered wide open. The fundamental nature of Viterbo's conjecture is demonstrated by the fact that it implies Mahler's conjecture from convex geometry \cite{AKO}. Recently, a renewed interested in this interplay arose, in part due to related results about systolic inequalities, see e.g., \cite{ABHS}.

Of course, the definition of the symplectic billiard map crucially relies on the convexity of the domain. At this point it is not more than idle speculation that this fact is more than a coincidence.

\bigskip 

{\bf Acknowledgments}. We are grateful to V.~Dragovic and B.~Jovanovic for a discussion of periodic  billiard orbits in ellipsoids, to R. Montgomery for a discussion of the Lexell theorem,  and to R. Schwartz for writing a computer program for experiments with polygons and for his insight concerning the symplectic billiards in trapezoids. In the past, the second author had numerous discussions of the topic of four planar billiards with E. Gutkin; his untimely death made it impossible for him to participate in this project. S.T. also acknowledges stimulating discussions of these ideas with S. Troubetzkoy. 

The first author is grateful to the hospitality of the Pennsylvania State University. He was supported by SFB/TRR 191. The second author is grateful to the hospitality of the Heidelberg University. He was supported by the NSF grant  DMS-1510055.

\section{Symplectic billiard in the plane} \label{planar}

\subsection{Symplectic billiard as an area preserving twist map} \label{refllaw}
\subsubsection{A precise look at the definition} \label{precise}
Let $\g$ be a smooth, strictly convex, closed, positively oriented curve, the boundary of our billiard table. 
Since $\g$ is strictly convex, for every point $x\in\g$ there exists a unique point $x^*\in\g$ with the property
\begin{equation*}
T_x\g=T_{x^*}\g\subset \R^2.
\end{equation*}
Clearly we have $x^{**}=x$. 

\begin{figure}[hbtp] 
\centering
\includegraphics[width=2.3in]{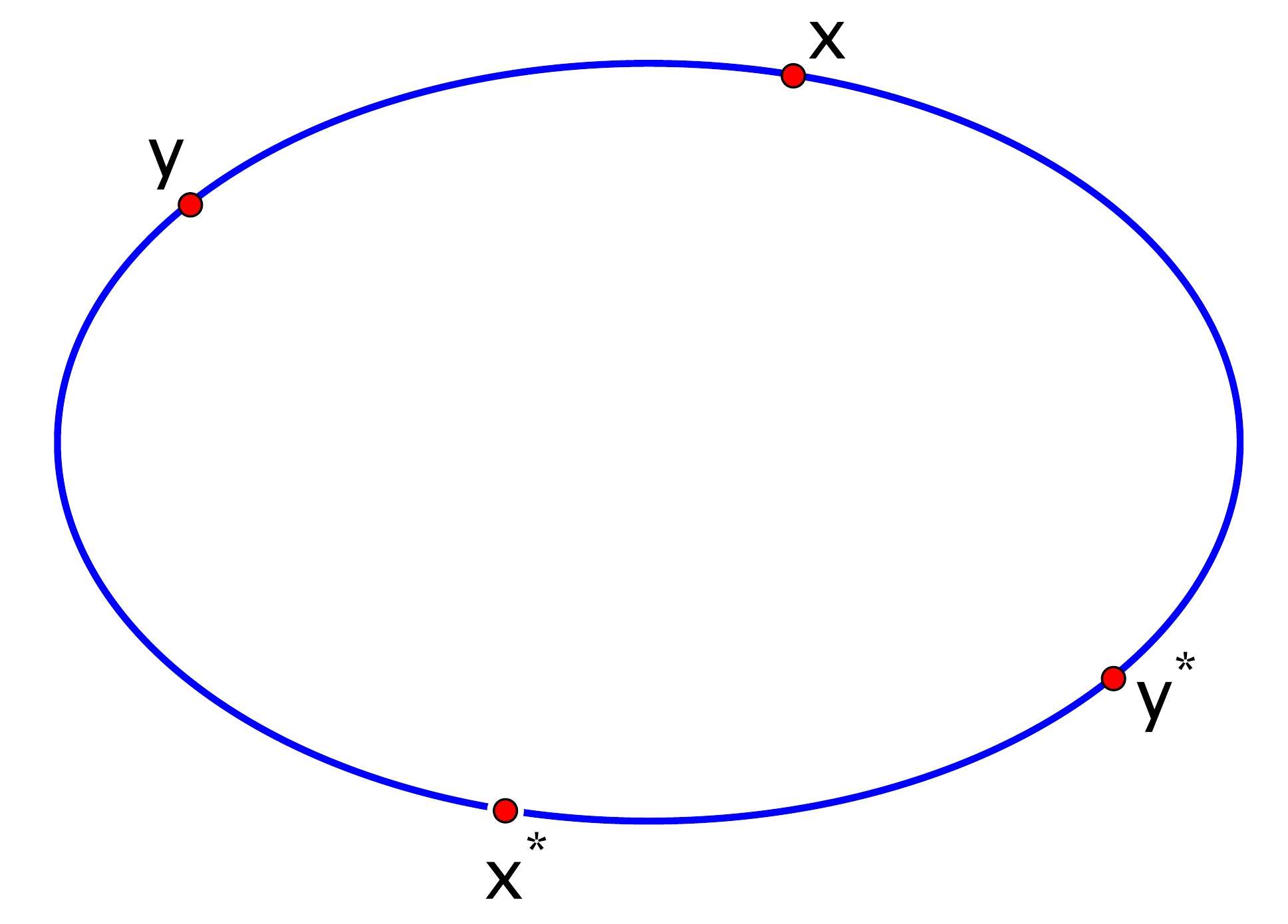}
\caption{Opposite points.}
\label{phase_space_2d}
\end{figure}
Denote by $\nu$ the outer normal of $\g$. Then
\begin{equation}\label{equal_tangent_space_as_om_on_normals}
T_x\g=T_y\g\quad \Longleftrightarrow \quad \omega(\nu_x,\nu_y)=0.
\end{equation}
Using the orientation of $\g$, we define
\begin{equation*}
\P:=\{(x,y)\in\g\times\g\mid x < y < x^*\}.
\end{equation*}
Note that this is actually antisymmetric in $x$ and $y$:
\begin{equation*}
\P=\{(x,y)\in\g\times\g\mid y^* < x <y\},
\end{equation*}
see Figure \ref{phase_space_2d}. Indeed,
\begin{equation*}
\P=\{(x,y)\in\g\times\g\mid \omega(\nu_x,\nu_y)>0\}.
\end{equation*}
We think of $\P$ as the (open, positive part of) phase space. The next lemma formalizes the definition of the symplectic billiard map.

\begin{lemma}\label{definition_of_Phi}
Given $(x,y)\in \P$, there exists a unique point $z\in\g$ with
\begin{equation*}
z-x\in T_y\g.
\end{equation*}
Moreover, the new pair $(y,z)$ lies again in our phase space: $(y,z)\in\P$.
\end{lemma}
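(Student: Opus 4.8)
The plan is to separate the two assertions. First I would observe that a point $w\in\g$ satisfies $w-x\in T_y\g$ exactly when $w$ lies on the line $\ell$ through $x$ in the tangent direction $\tau_y$ of $\g$ at $y$, so the task is to understand $\ell\cap\g$. Since $(x,y)\in\P$ forces $x<y<x^*$, in particular $y\notin\{x,x^*\}$, the tangent lines $T_x\g$ and $T_y\g$ are distinct; hence $\ell$ is transverse to $\g$ at $x$. Therefore $\ell$ crosses from the interior to the exterior of the bounded, strictly convex region enclosed by $\g$, and being bounded this region forces $\ell$ to exit at exactly one further boundary point, while strict convexity (no segments in $\g$) rules out any third intersection. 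Thus $\ell\cap\g=\{x,z\}$ with a unique $z\neq x$; this $z$ is the required point, the trivial solution $w=x$ being excluded.

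For the twist assertion $(y,z)\in\P$, I would analyze chords parallel to $\tau_y$. The only tangent lines of $\g$ parallel to $\tau_y$ touch at $y$ and $y^*$, and these two points cut $\g$ into two arcs. Introduce the linear height $h(w)=\langle w,\nu_y\rangle$; along $\g$ its derivative is $\langle\tau_w,\nu_y\rangle$, which vanishes precisely when $\tau_w$ is parallel to $\tau_y$, i.e. only at $y$ and $y^*$. Hence $h$ is strictly monotone on each arc, attaining its maximum at $y$ and its minimum at $y^*$, so every level strictly between these extremes is realized exactly once on each arc. Consequently each chord of $\g$ parallel to $\tau_y$ has one endpoint on each arc; since $x\neq y,y^*$ the value $h(x)$ is interior, so the chord $xz$ has $x$ on one arc and $z$ on the other.

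It remains, and this is the main point, to check that $(x,y)\in\P$ places $x$ on the arc whose partner $z$ satisfies $y<z<y^*$. I would do this bookkeeping with the Gauss map: parametrising $\g$ by the angle $\theta$ of its outer normal $\nu$, strict convexity and positive orientation make $\theta$ monotone, while $\omega(\nu_x,\nu_y)>0\Leftrightarrow\theta_y-\theta_x\in(0,\pi)$ identifies $\P$ with $\{\theta_x<\theta_y<\theta_x+\pi\}$, and $y^*$ corresponds to $\theta_y+\pi$. The two arcs are then $\{\theta\in(\theta_y,\theta_y+\pi)\}$ and $\{\theta\in(\theta_y-\pi,\theta_y)\}$, and the hypothesis $\theta_x\in(\theta_y-\pi,\theta_y)$ puts $x$ on the second arc, forcing $\theta_z\in(\theta_y,\theta_y+\pi)$, i.e. $y<z<y^*$, which is exactly $(y,z)\in\P$. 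The only genuine obstacle is getting this orientation tracking correct; once the monotonicity of $\theta$ and the identification of $\P$ with the normal-angle condition are in hand, the conclusion is immediate.
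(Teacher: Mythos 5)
Your proof is correct, and while the first half (existence and uniqueness of $z$ via transversality of $x+T_y\g$ at $x$ plus convexity) coincides with the paper's argument, your treatment of the assertion $(y,z)\in\P$ is genuinely different. The paper argues by continuity: the claim holds when $y$ is close to $x$, and if it ever failed one could deform to the degenerate case $\omega(\nu_y,\nu_z)=0$, which forces $T_y\g=T_z\g$ and hence a contradiction with $x\neq z$. You instead give a direct, pointwise verification: the points $y$ and $y^*$ split $\g$ into two arcs on each of which the height $\langle\,\cdot\,,\nu_y\rangle$ is strictly monotone, so every chord parallel to $T_y\g$ has one endpoint on each arc, and the normal-angle bookkeeping ($\omega(\nu_x,\nu_y)>0\Leftrightarrow\theta_y-\theta_x\in(0,\pi)$, with $y^*$ at $\theta_y+\pi$) places $x$ and $z$ on the correct arcs. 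Your version is more explicit and avoids the implicit appeal to continuity of $z$ in $(x,y)$ and connectedness of the locus where the hypothesis holds; it also pinpoints exactly where $z$ lands. The trade-off is that it is intrinsically two-dimensional, whereas the paper's continuity argument is the one that transfers verbatim to the higher-dimensional setting of Lemma \ref{lem:basic_def_sympl_case}.
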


\begin{proof}Since $(x,y)\in \P$, we know that $T_x\g\pitchfork T_y\g$. Using that $\g$ is convex, we obtain
\begin{equation}\label{intersection}
(x+T_y\g)\cap\g=\{x,z\}.
\end{equation}
Moreover, $z\neq x$ since otherwise $T_y\g=T_z\g=T_x\g$, which contradicts $x<y<x^*$. In other words, the line through $x$ parallel to $T_y\g$ intersects $\g$ in a new point $z$, see  Figure \ref{refl}. Equation \eqref{intersection} is equivalent to $z-x\in T_y\g$.

To show that $(y,z)\in \P$, we observe that if $y$ is close to $x$, then so is $z$, and therefore $\omega(\nu_x,\nu_y)>0$ implies $\omega(\nu_y,\nu_z)>0$. 

Now assume that $x<y<x^*$ and $\omega(\nu_y,\nu_z)\leq0$. By continuity and moving $y$ close to $x$, we can arrange $\omega(\nu_y,\nu_z)=0$. But then equation \eqref{equal_tangent_space_as_om_on_normals} implies that $T_y\g=T_z\g$, which implies $y=x$, a contradiction.
\end{proof}
Thus the map
\begin{equation*}
\Phi:\P\to\P,\ \  (x,y)\mapsto(y,z),
\end{equation*} 
with $z\in\g$ being the unique point satisfying $z-x\in T_y\g$, is well-defined.

\begin{remark}
{\rm 
Of course the same dynamics/map is defined on the negative part of phase space $\{(x,y)\in\gamma\times\gamma\mid x^* < y < x\}$ simply by reversing the orientation of $\g$.
}
\end{remark}

We extend $\Phi$ to the closure $\bar\P=\{(x,y)\in\gamma\times\gamma\mid x\leq y\leq x^*\}$ by continuity. The first case is obvious
\begin{equation}
\lim_{y\to x}\Phi(x,y)=(x,x),
\end{equation}
that is, the map extends as the identity. In the other case, we claim
\begin{equation}
\lim_{y\to x^*}\Phi(x,y)=(x^*,x).
\end{equation}
This follows from the observation that, due to convexity, $y\mapsto z(y)$ is monotone, and $\lim_{y\to x^*}T_y\g=T_{x^*}\g$. 

\begin{lemma}
The continuous extension $\Phi(x,x^*)=(x^*,x)$ is characterized by the 2-periodicity. That is,  $\Phi(x,y)=(y,x)$ is equivalent to $y\in\{x,x^*\}$.
\end{lemma}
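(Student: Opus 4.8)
The plan is to prove the two implications separately, the reverse direction being immediate from the continuous extension and the forward direction resting on a tangency argument enabled by strict convexity.

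For the implication $y\in\{x,x^*\}\Rightarrow\Phi(x,y)=(y,x)$, I would simply invoke the two continuous-extension formulas already established above: the relation $\lim_{y\to x}\Phi(x,y)=(x,x)$ handles the case $y=x$, and $\lim_{y\to x^*}\Phi(x,y)=(x^*,x)$ handles $y=x^*$. In both cases the second coordinate of $\Phi(x,y)$ equals $x$, which is precisely the assertion $\Phi(x,y)=(y,x)$.

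For the converse, suppose $\Phi(x,y)=(y,x)$. By the definition of $\Phi$, this says that the point $z$ determined by $z-x\in T_y\g$ equals $x$; equivalently, the line $\ell:=x+T_y\g$ through $x$ with direction $T_y\g$ meets $\g$ only at $x$, i.e.\ the two points appearing in \eqref{intersection} coincide. Since $\g$ is strictly convex, a line through a point of $\g$ meets $\g$ in exactly one further point unless it is the tangent line there. Hence $\ell$ meeting $\g$ only in $x$ forces $\ell=T_x\g$, that is $T_y\g=T_x\g$. By \eqref{equal_tangent_space_as_om_on_normals} this is equivalent to $\omega(\nu_x,\nu_y)=0$, and by the defining property of $x^*$ as the unique point, besides $x$ itself, with parallel tangent line, we conclude $y\in\{x,x^*\}$.

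The only point requiring care is the degenerate-versus-generic dichotomy for the line $\ell$. I expect the main, though modest, obstacle to be making precise that for a strictly convex curve the coincidence of the two points in \eqref{intersection} is equivalent to tangency at $x$: one must observe that $\ell$ cannot be disjoint from $\g$ (it passes through $x\in\g$) and use strict convexity to guarantee that a genuine secant produces a \emph{distinct} second intersection. This is exactly the content already exploited in Lemma \ref{definition_of_Phi} to show $z\neq x$ on the open phase space $\P$, so the present argument is a boundary companion to that lemma rather than anything genuinely new.
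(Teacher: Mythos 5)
Your proof is correct and follows essentially the same route as the paper: the forward implication is read off from the continuous extension, and the converse uses strict convexity together with the intersection characterization $(x+T_y\g)\cap\g=\{x,z\}$ from Lemma \ref{definition_of_Phi} to show that $z=x$ forces $T_y\g=T_x\g$ and hence $y\in\{x,x^*\}$. The only cosmetic difference is that you conclude positively via the tangency dichotomy, whereas the paper phrases the same step as a contradiction with $(x,y)\in\P$; the content is identical.
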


\begin{proof}One direction is exactly the continuous extension. Now assume that $\Phi(x,y)=(y,x)$. If $(x,y)\in\P$ then, by Lemma \ref{definition_of_Phi},
\begin{equation*}
(x+T_y\g)\cap\g=\{x,y\}
\end{equation*}
with $x\neq y$. This is clearly a contradiction since $T_y\g\cap\g=\{y\}$. Therefore, either $(x,y)=(x,x)$ or $(x,y)=(x,x^*)$.
\end{proof}
\begin{remark} \label{css}
{\rm The envelope of the 1-parameter family of chords $x x^*$ is a caustic of our billiard. This envelope is called the centre symmetry set of $\g$, and it was studied in the framework of singularity theory \cite{GH}.
}
\end{remark}

Now we identify a generating function for the symplectic billiard map $\Phi$.

\begin{lemma}
The function 
\begin{equation*}
S:\P\to\R,\ \  (x,y)\mapsto S(x,y):=\omega(x,y)
\end{equation*} 
is a generating function for $\Phi$, that is
\begin{equation*}
\Phi(x,y)=(y,z) \quad \Longleftrightarrow \quad \frac{d}{dy}\Big[S(x,y)+S(y,z)\Big]=0.
\end{equation*}
\end{lemma}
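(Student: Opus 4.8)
The plan is a direct computation that uses only the bilinearity and antisymmetry of $\omega$ together with its non-degeneracy. First I would clarify what $\frac{d}{dy}$ means: we hold the endpoints $x$ and $z$ fixed and differentiate along the curve $\g$. So I would fix a regular parameterization of $\g$ near $y$ and write $\dot y$ for its velocity vector, which is nonzero and spans the tangent line $T_y\g$.

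Expanding the bracket gives $S(x,y)+S(y,z)=\omega(x,y)+\omega(y,z)$. Since $\omega(x,\cdot)$ and $\omega(\cdot,z)$ are linear functionals on $\R^2$, differentiating along the curve and then invoking antisymmetry yields
\begin{equation*}
\frac{d}{dy}\Big[\omega(x,y)+\omega(y,z)\Big]=\omega(x,\dot y)+\omega(\dot y,z)=\omega(\dot y,z-x).
\end{equation*}
Hence the critical point condition $\frac{d}{dy}[S(x,y)+S(y,z)]=0$ is equivalent to $\omega(\dot y,z-x)=0$.

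It then remains to identify this with the reflection law. Because $\omega$ is non-degenerate and $\dot y\neq 0$, the linear functional $\omega(\dot y,\cdot)$ is nonzero and its kernel is exactly the line $\R\,\dot y=T_y\g$. Therefore $\omega(\dot y,z-x)=0$ if and only if $z-x\in T_y\g$, which is precisely the defining relation of the symplectic billiard map; by Lemma \ref{definition_of_Phi} this holds exactly when $\Phi(x,y)=(y,z)$. The one point that deserves care — the only real content beyond the one-line computation — is this final equivalence between the analytic condition $\omega(\dot y,z-x)=0$ and the geometric condition $z-x\in T_y\g$, which rests on the tangent space being one-dimensional and spanned by $\dot y$. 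Everything else is formal, so I do not expect any serious obstacle.
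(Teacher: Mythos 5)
Your proof is correct and follows essentially the same route as the paper's: expand by bilinearity, reduce the critical-point condition to $\omega(\dot y,\,z-x)=0$, and use that in the plane the $\omega$-orthogonal complement of a nonzero tangent vector is the line it spans. The only nuance the paper flags explicitly and you leave implicit is that $z=x$ also satisfies the analytic condition, so one must invoke $x\neq z$ (guaranteed for $(x,y)\in\P$ by Lemma \ref{definition_of_Phi}) to identify the critical point with the actual reflection point $\Phi(x,y)=(y,z)$.
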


\begin{proof}\begin{equation*}
\begin{aligned}
\frac{d}{dy}\Big[S(x,y)+S(y,z)\Big]=0 \quad &\Longleftrightarrow \quad \omega(x,v)+\omega(v,z)=0\;\;\forall v\in T_y\g\\
&\Longleftrightarrow \quad \omega(x-z,v)=0\;\;\forall v\in T_y\g\\
&\stackrel{(\ast)}{\Longleftrightarrow} \quad x-z\in T_y\g.\\
\end{aligned}
\end{equation*}
In $(\ast)$ we used $x\neq z$ which is due to $(x,y)\in \P$, compare with Lemma \ref{definition_of_Phi}.
\end{proof}

\begin{remark}
{\rm 
The area of the triangle $xyz$ on the right of Figure \ref{refl} equals
$$
\frac{1}{2} \left[ \omega(x,y) + \omega(y,z) + \omega(z,x) \right],
$$
which, ignoring the factor, differs from $S(x,y)+S(y,z)$ by a function of $x$ and $z$, having no effect on the partial derivative with respect to $y$.
}
\end{remark}

The symplectic billiard map commutes with affine transformations of the plane. Obviously, circles are completely integrable: concentric circles are the caustics; by affine equivariance, ellipses are completely integrable as well.

\subsubsection{Invariant area form and the twist condition} \label{Lagr}

Let $\g(t)$ be a parameterization of the curve $\g$ with $0\le x \le L$. Denote by $t^*$ the involution on the circle of parameters: the tangents at $\g(t)$ and $\g(t^*)$ are parallel. We have
$
S(t_1,t_2) = \omega(\g(t_1),\g(t_2)),
$
and $\Phi(t_1,t_2) = (t_2,t_3)$ if
\begin{equation} \label{Lagmap}
S_2 (t_1,t_2) + S_1(t_2,t_3) =0,
\end{equation}
where $S_{1}$ and $S_2$ denote the first partial derivatives with respect to the first and second arguments.

Following the theory of twist maps (see, e.g., \cite{Go}), we introduce new variables 
$$
s_1 = - S_1(t_1,t_2),\ s_2 = S_2(t_1,t_2),
$$
and a 2-form $\ \Omega = S_{12}(t_1,t_2)\ dt_1\wedge dt_2$ on $S^1\times S^1$.

\begin{lemma} \label{twist}
The variables $(t,s)$ are coordinates on the  phase space $\P$, and $\Omega$ is an area form therein.
The map $\Phi$ is a monotone twist map, and the area form is $\Phi$-invariant: $\Phi^*\Omega = \Omega$.
\end{lemma}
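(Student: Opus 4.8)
The plan is to read everything off from the Hessian of the generating function $S(t_1,t_2)=\omega(\g(t_1),\g(t_2))$. First I would compute the partial derivatives directly from the bilinearity of $\omega$: one finds $S_1(t_1,t_2)=\omega(\g'(t_1),\g(t_2))$ and hence the mixed partial $S_{12}(t_1,t_2)=\omega(\g'(t_1),\g'(t_2))$. The point is that $\g'(t)$ spans the tangent line $T_{\g(t)}\g$, so $S_{12}=0$ exactly when $T_{t_1}\g=T_{t_2}\g$; by the characterization \eqref{equal_tangent_space_as_om_on_normals} this happens precisely when the outer normals at $\g(t_1)$ and $\g(t_2)$ are $\omega$-orthogonal, i.e. on the boundary of $\P$. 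Consequently $S_{12}$ is nowhere zero on the open, connected region $\P$, so it has a constant sign there (positive for the chosen orientation, matching $\omega(\nu_x,\nu_y)>0$). This is the twist condition, and it immediately gives that $\Omega=S_{12}\,dt_1\wedge dt_2$ is nondegenerate, hence an area form on $\P$.

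Next I would check that $(t,s)=(t_1,s_1)$ are coordinates. The change of variables $(t_1,t_2)\mapsto(t_1,s_1)$ with $s_1=-S_1(t_1,t_2)$ has Jacobian determinant $-S_{12}\neq 0$, so it is a local diffeomorphism; moreover, for fixed $t_1$ the map $t_2\mapsto s_1=-S_1(t_1,t_2)$ is strictly monotone because $\partial s_1/\partial t_2=-S_{12}$ has constant sign, so $s_1$ is a genuine fiber coordinate whose image is an interval. The same monotonicity is the statement that $\Phi$ is a monotone twist map: writing $\Phi(t_1,s_1)=(t_2,s_2)$, the relation $s_1=-S_1(t_1,t_2)$ gives $\partial t_2/\partial s_1=-1/S_{12}$ with a definite sign, i.e. the image of a vertical line $\{t_1=\mathrm{const}\}$ is a graph over the $t_2$-axis with monotone tilt.

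For the invariance $\Phi^*\Omega=\Omega$ I would use the generating-function trick rather than a brute-force Jacobian. From $dS=S_1\,dt_1+S_2\,dt_2=-s_1\,dt_1+s_2\,dt_2$ and $d^2S=0$ one gets $ds_1\wedge dt_1=ds_2\wedge dt_2$; a one-line computation using $S_{21}=S_{12}$ shows each side equals $S_{12}\,dt_1\wedge dt_2=\Omega$. Introduce the two maps $F(t_1,t_2)=(t_1,s_1)$ and $G(t_1,t_2)=(t_2,s_2)$ into $(S^1\times\R,\,ds\wedge dt)$; the previous identity says $F^*(ds\wedge dt)=G^*(ds\wedge dt)=\Omega$. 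The key observation is that $F\circ\Phi=G$: indeed $F(\Phi(t_1,t_2))=F(t_2,t_3)=(t_2,-S_1(t_2,t_3))$, and the defining relation \eqref{Lagmap}, $S_2(t_1,t_2)+S_1(t_2,t_3)=0$, gives $-S_1(t_2,t_3)=S_2(t_1,t_2)=s_2$. Hence $\Phi^*\Omega=\Phi^*F^*(ds\wedge dt)=(F\circ\Phi)^*(ds\wedge dt)=G^*(ds\wedge dt)=\Omega$.

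I expect the only genuine content to be in the first paragraph: identifying $S_{12}=\omega(\g'(t_1),\g'(t_2))$ with the transversality of the tangent lines and thereby pinning down its sign on $\P$, which is exactly what links the abstract twist-map formalism to the geometry of the phase space defined via \eqref{equal_tangent_space_as_om_on_normals}. Everything after that — the coordinate change, the monotonicity, and the computation forcing $\Phi^*\Omega=\Omega$ — is the formal twist-map machinery and should be routine.
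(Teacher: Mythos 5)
Your proposal is correct and follows essentially the same route as the paper: compute $S_{12}(t_1,t_2)=\omega(\g'(t_1),\g'(t_2))$, identify its sign on $\P$ with the defining condition $\omega(\nu_x,\nu_y)>0$ (the paper does this directly via $\omega(\g'(t_1),\g'(t_2))=\omega(-i\g'(t_1),-i\g'(t_2))$ with $-i\g'$ the outward normal, where you use nonvanishing plus connectedness of $\P$), and deduce the coordinate, area-form, and twist statements from $S_{12}\neq 0$. Your invariance argument via $d^2S=0$ and $F\circ\Phi=G$ is just a more detailed write-up of the paper's one-line instruction to differentiate \eqref{Lagmap} and wedge with $dt_2$, so no substantive difference there.
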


\begin{proof}Identify $\R^2$ with $\C$.
Then
$$
-\frac{\partial s_1}{\partial t_2}(t_1,t_2)= S_{12}(t_1,t_2) = \omega(\g'(t_1),\g'(t_2)) = \omega(-i\g'(t_1),-i\g'(t_2)) >0,
$$
since $-i \g'(t)$ is an outward normal vector to $\g$ at point $\g(t)$ and, in $\P$, one has by definition $\omega(\nu_{\g (t_1)},\nu_{\g(t_2)})>0$. It follows that $\Omega$ is an area form on $\P$.
It also follows that the Jacobian of the map $(t_1,t_2) \mapsto (t_1,s_1)$ does not vanish, therefore $(t_1,s_1)$ are coordinates.
The twist condition $\partial t_2/\partial s_1 < 0$ follows as well: indeed, $\partial s_1 / \partial t_2 = - S_{12} < 0$.

Finally, it follows that equation (\ref{Lagmap}) implies that the 2-form $S_{12} (t_1,t_2) dt_1\wedge dt_2$ is $\Phi$-invariant, indeed take the exterior derivative of  (\ref{Lagmap}) and wedge multiply by $d t_2$.  This completes the proof.
\end{proof}
Thus a variety of results about monotone twist maps apply to our symplectic billiard. In particular, we need to the following fact. For every period $n \ge 2$ and any rotation number $1\le k \le [n/2]$, the symplectic billiard has at least two distinct $n$-periodic orbits with rotation number $k$.

\subsubsection{Spherical and hyperbolic versions} \label{sphhyp}

Using the same generating function, the area of a triangle, one can define the symplectic billiard map in the  spherical and hyperbolic geometries. The definition is based on the Lexell theorem of spherical geometry, and its hyperbolic analog, that describes the locus of vertices of triangles with a given base and a fixed area, see \cite{MaMa,PaSu}. 

\begin{figure}[hbtp] 
\centering
\includegraphics[width=2.4in]{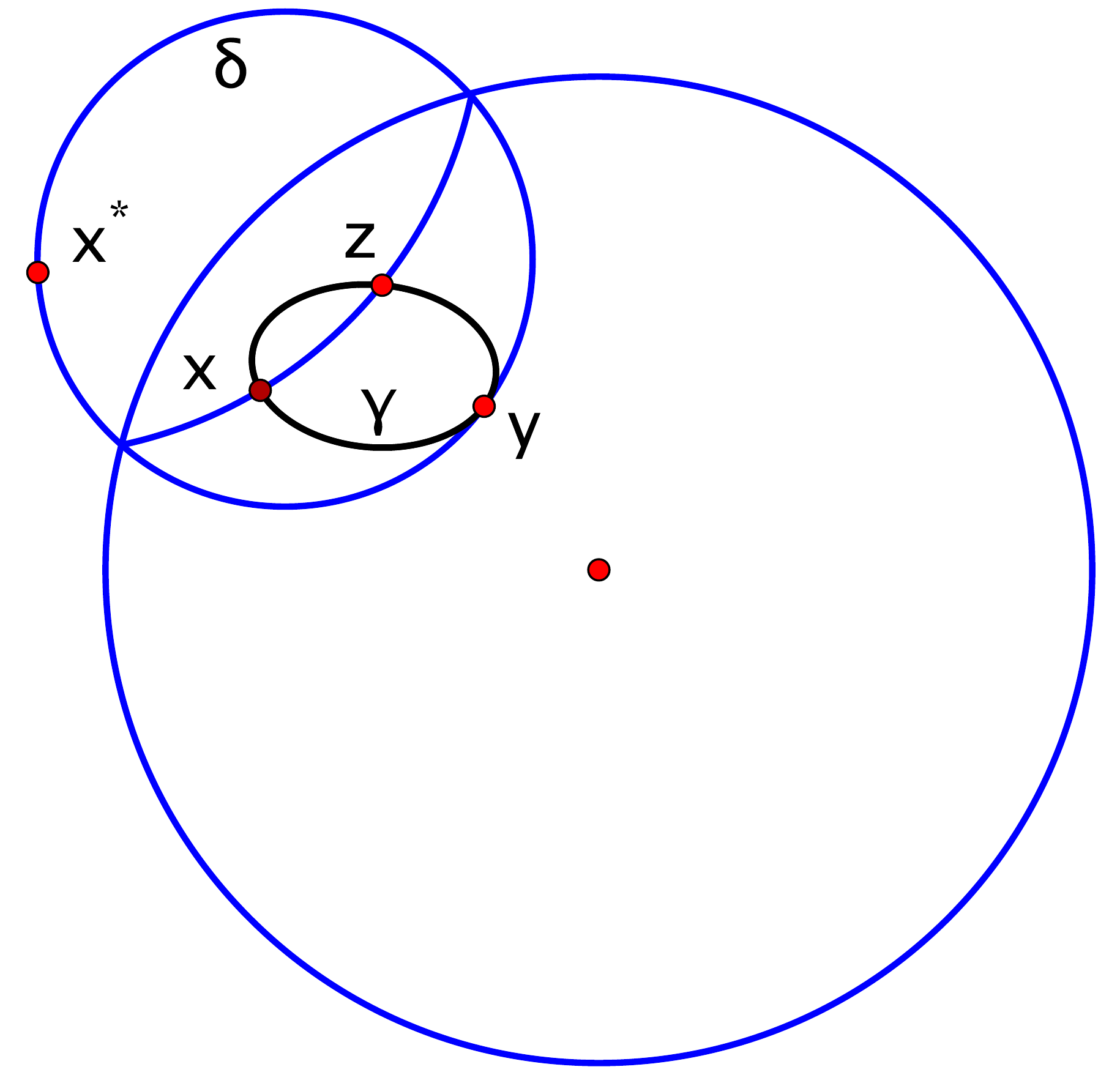}
\caption{The symplectic billiard map in the hyperbolic plane (Poincar\'e disk model): $\Phi(x,y)=(y,z)$.}
\label{hyper}
\end{figure}

Here is the formulation of Lexell's theorem. Let $xyz$ be a spherical triangle, and let $x^*$ and $z^*$ be the points antipodal to $x$ and $z$. Consider the arc of a spherical circle $x^* y z^*$. This arc is one half of the locus of points $w$ such that the area of the spherical triangle $xwz$ equals that of the triangle $xyz$, the other half being its reflection in the geodesic $xz$.

Consider the hyperbolic plane in the Poincar\'e disk model. Let now $x^*$ and $z^*$ be the inversions of points $x$ and $z$ in the unit circle. Then the above formulation holds without change (in hyperbolic terms, the arc $x^* y z^*$ is an equidistant curve, a curve of constant curvature less than 1).

This leads to the following definition of the symplectic billiard in the spherical and hyperbolic geometries. Let $\g$ be a closed convex curve, and let $x, y \in \g$. Let $\delta$ be the circle through point $x^*$ that is tangent to $\g$ at point $y$, and let $\delta^*$ be its image under the  antipodal involution, respectively, inversion in the absolute. Then  $\Phi(x,y) = (y,z)$,  where $z$ is the intersection point of $\delta^*$  with $\g$, different from $x$ (assuming that such a point is unique, otherwise the map $\Phi$ becomes multi-valued). See Figure \ref{hyper}.

\subsection{Total phase space area} \label{phasearea}

In this section we calculate the $\Omega$-area of the phase space.  

Let $D$ be the billiard table with the boundary $\g$. 
Parameterize $\g$ by the direction $\alpha$ of its tangent line. Choose an origin $O$ inside $D$, and let $p(\alpha)$ be the support function, that is, the distance from $O$ to the tangent line to $\g$ having direction $\alpha+\pi/2$, see Figure \ref{support}.

\begin{figure}[hbtp] 
\centering
\includegraphics[width=2in]{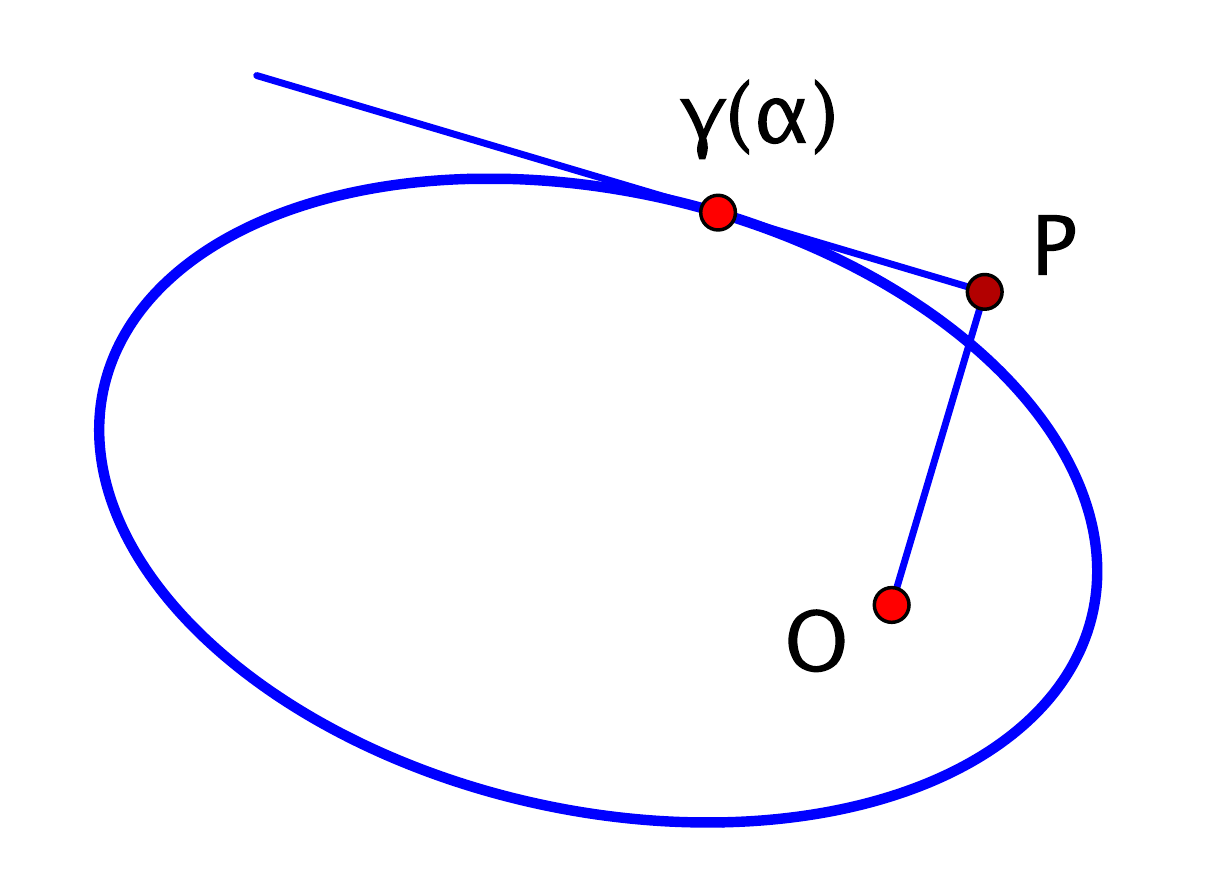}
\caption{The segment $OP$ has direction $\alpha$, and its length is $p(\alpha)$.}
\label{support}
\end{figure}

The Cartesian coordinates of the point $\g(\alpha)$ are given by the formulas
\begin{equation} \label{supform}
x(\alpha)=p(\alpha) \cos \alpha - p'(\alpha) \sin \alpha,\ y(\alpha)= p(\alpha) \sin \alpha + p'(\alpha) \cos \alpha.
\end{equation}
The perimeter length of $\g$ and the area bounded by it are given, respectively, by the integrals
$$
\int_0^{2\pi} p(\alpha)\ d\alpha \quad {\rm and} \quad \frac{1}{2} \int_0^{2\pi} [p(\alpha)+p''(\alpha)] p(\alpha)\ d\alpha,
$$
see, e.g., \cite{Sa}. 

The phase space $\P$ consists of pairs $\alpha_1,\alpha_2$ with $\alpha_1 < \alpha_2 < \alpha_1+\pi$. Let $w(\alpha)=p(\alpha)+p(\alpha+\pi)$ be the width of $D$ in the direction $\alpha$.

The  symmetrization $\bar D$ of the domain $D$  is the Minkowski sum with the centrally symmetric domain, scaled by 1/2. The support function $\bar p(\alpha)$ of $\bar D$ equals $[p(\alpha)+p(\alpha+\pi)]/2$. 

\begin{theorem} \label{totar}
The total $\Omega$-area of the phase space $\P$ equals four times the area of the symmetrization $\bar D$.
\end{theorem}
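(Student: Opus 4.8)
The plan is to push everything through the tangent-direction parameterization and reduce the claimed identity to a one-dimensional integral in the support function. First I would differentiate the formulas (\ref{supform}) to get
$$\g'(\alpha) = (p(\alpha)+p''(\alpha))(-\sin\alpha,\cos\alpha),$$
so that, writing $\rho := p+p''$ for the radius of curvature and recalling $S_{12}(\alpha_1,\alpha_2)=\omega(\g'(\alpha_1),\g'(\alpha_2))$ from the proof of Lemma \ref{twist}, a two-by-two determinant computation yields
$$S_{12}(\alpha_1,\alpha_2) = \rho(\alpha_1)\rho(\alpha_2)\sin(\alpha_2-\alpha_1).$$
On $\P=\{\alpha_1<\alpha_2<\alpha_1+\pi\}$ the factor $\sin(\alpha_2-\alpha_1)$ is positive, reconfirming that $\Omega$ is a genuine area form, and the total area is
$$\int_{\P}\Omega = \int_0^{2\pi}\rho(\alpha_1)\left(\int_{\alpha_1}^{\alpha_1+\pi}\rho(\alpha_2)\sin(\alpha_2-\alpha_1)\,d\alpha_2\right)d\alpha_1.$$

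The key step is to evaluate the inner integral in closed form. Expanding $\sin(\alpha_2-\alpha_1)$ and using $\g'(\alpha_2)=\rho(\alpha_2)(-\sin\alpha_2,\cos\alpha_2)$, I recognize the inner integral as the normal-direction component of $\int_{\alpha_1}^{\alpha_1+\pi}\g'(\alpha_2)\,d\alpha_2 = \g(\alpha_1+\pi)-\g(\alpha_1)$; concretely it equals $\langle \g(\alpha_1)-\g(\alpha_1+\pi),\,\nu(\alpha_1)\rangle$ with $\nu(\alpha)=(\cos\alpha,\sin\alpha)$. The support-function identities $\g(\alpha)\cdot\nu(\alpha)=p(\alpha)$ and $\g(\alpha+\pi)\cdot\nu(\alpha)=-p(\alpha+\pi)$ (both immediate from (\ref{supform}), the second because $\nu(\alpha+\pi)=-\nu(\alpha)$) collapse this projection to the width:
$$\int_{\alpha_1}^{\alpha_1+\pi}\rho(\alpha_2)\sin(\alpha_2-\alpha_1)\,d\alpha_2 = p(\alpha_1)+p(\alpha_1+\pi) = w(\alpha_1) = 2\bar p(\alpha_1).$$
This geometric identification—that the inner integral is exactly the width of $D$ in direction $\alpha_1$—is where I expect the real content to sit; the rest is bookkeeping.

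Consequently $\int_{\P}\Omega = 2\int_0^{2\pi}\rho(\alpha)\bar p(\alpha)\,d\alpha$, and it remains to match this against $4\,\mathrm{Area}(\bar D)=2\int_0^{2\pi}\bar\rho(\alpha)\bar p(\alpha)\,d\alpha$, where $\bar\rho=\bar p+\bar p''=\tfrac12(\rho(\alpha)+\rho(\alpha+\pi))$. Since $\bar p$ is $\pi$-periodic ($\bar D$ being centrally symmetric), splitting $\int_0^{2\pi}=\int_0^{\pi}+\int_{\pi}^{2\pi}$ and substituting $\alpha\mapsto\alpha+\pi$ in the second piece turns $\int_0^{2\pi}\rho\,\bar p\,d\alpha$ into $\int_0^{\pi}(\rho(\alpha)+\rho(\alpha+\pi))\bar p(\alpha)\,d\alpha = 2\int_0^{\pi}\bar\rho\,\bar p\,d\alpha = \int_0^{2\pi}\bar\rho\,\bar p\,d\alpha$, the last equality again by $\pi$-periodicity of $\bar\rho\bar p$. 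Hence $\int_{\P}\Omega = 2\int_0^{2\pi}\bar\rho\,\bar p\,d\alpha = 4\,\mathrm{Area}(\bar D)$. The only subtlety worth double-checking is the orientation/sign in passing from the $2$-form $\Omega$ to an ordinary double integral, but positivity of $\sin(\alpha_2-\alpha_1)$ on $\P$ settles that.
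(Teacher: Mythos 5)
Your proposal is correct and follows the paper's proof essentially step for step: same computation of $S_{12}(\alpha_1,\alpha_2)=\rho(\alpha_1)\rho(\alpha_2)\sin(\alpha_2-\alpha_1)$, same iterated integral, same reduction of the inner integral to the width $p(\alpha_1)+p(\alpha_1+\pi)$, and the same symmetrization trick at the end. The only (minor, and arguably cleaner) difference is that you evaluate the inner integral by recognizing it as $\langle \g(\alpha_1)-\g(\alpha_1+\pi),\nu(\alpha_1)\rangle$ via the fundamental theorem of calculus, where the paper instead integrates by parts twice; both yield the same identity.
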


\begin{proof}Since $\Omega = S_{12}\ d\alpha_1\wedge d\alpha_2$, we need to calculate $S_{12} = [\g'(\alpha_1),\g'(\alpha_2)]$.

One has from (\ref{supform})
$$
\g'(\alpha) = [p(\alpha)+p''(\alpha)] (-\sin\alpha,\cos\alpha), 
$$
hence
$$
[\g'(\alpha_1),\g'(\alpha_2)] = [p(\alpha_1)+p''(\alpha_1)] [p(\alpha_2)+p''(\alpha_2)] \sin(\alpha_2-\alpha_1).
$$
Therefore the phase area is
$$
\int_0^{2\pi} \int_{\alpha_1}^{\alpha_1+\pi} [p(\alpha_1)+p''(\alpha_1)] [p(\alpha_2)+p''(\alpha_2)] \sin(\alpha_2-\alpha_1)\ d\alpha_2 d\alpha_1. 
$$
Consider the inner integral:
\begin{equation}\label{eqn:inner_integral}
\int_{\alpha_1}^{\alpha_1+\pi} p(\alpha_2) \sin(\alpha_2-\alpha_1)\ d\alpha_2 + \int_{\alpha_1}^{\alpha_1+\pi} p''(\alpha_2) \sin(\alpha_2-\alpha_1)\ d\alpha_2\;.
\end{equation}
We use integration by parts twice on the second summand to compute:
\begin{equation}\nonumber
\begin{aligned}
\int_{\alpha_1}^{\alpha_1+\pi} p''(\alpha_2)& \sin(\alpha_2-\alpha_1)\ d\alpha_2\\[2ex]
&=\underbrace{p'(\alpha_2) \sin(\alpha_2-\alpha_1)\biggr\rvert_{\alpha_1}^{\alpha_1+\pi}}_{=0}-\int_{\alpha_1}^{\alpha_1+\pi} p'(\alpha_2) \cos(\alpha_2-\alpha_1)\ d\alpha_2\\[2ex]
&=-\int_{\alpha_1}^{\alpha_1+\pi} p'(\alpha_2) \cos(\alpha_2-\alpha_1)\ d\alpha_2\\[2ex]
&=- p(\alpha_2) \cos(\alpha_2-\alpha_1)\biggr\rvert_{\alpha_1}^{\alpha_1+\pi} - \int_{\alpha_1}^{\alpha_1+\pi} p(\alpha_2) \sin(\alpha_2-\alpha_1)\ d\alpha_2\\[2ex]
&= p(\alpha_1) + p(\alpha_1+\pi)- \int_{\alpha_1}^{\alpha_1+\pi} p(\alpha_2) \sin(\alpha_2-\alpha_1)\ d\alpha_2\;.
\end{aligned}
\end{equation}
Thus the inner integral \eqref{eqn:inner_integral} evaluates to 
$
p(\alpha_1) + p(\alpha_1+\pi).
$

Now consider the outer integral and recall that the support function $\bar p(\alpha)$  of the symmetrization $\bar D$ equals $\tfrac12[p(\alpha)+p(\alpha+\pi)]$. 
\begin{equation}\nonumber
\begin{aligned}
\int_0^{2\pi} &[p(\alpha_1)+p''(\alpha_1)] [p(\alpha_1) + p(\alpha_1+\pi)]\ d\alpha_1 \\ 
&=\frac{1}{2} \int_0^{2\pi} [p(\alpha_1)+p''(\alpha_1) + p(\alpha_1+\pi)+p''(\alpha_1+\pi)] [p(\alpha_1) + p(\alpha_1+\pi)]\ d\alpha_1 \\
& =2 \int_0^{2\pi} [\bar p(\alpha) + \bar p''(\alpha)] \bar p(\alpha)\ d\alpha \\
&= 4 \;\text{area}(\bar D),
\end{aligned}
\end{equation}
as claimed.
\end{proof}
It is interesting to compare this with the usual billiards: in that case, the area of the phase space (with respect to the canonical area form on the space of oriented lines) equals twice the perimeter length of the boundary curve (see, e.g., \cite{Ta}).

\subsection{Existence and non-existence of caustics} \label{exnonex}

\subsubsection{Non-existence of caustics} \label{nonex}

The next result is a symplectic billiard version of Mather's theorem: If a smooth convex billiard curve has a point of zero curvature, then the (usual) billiard inside this curve has no caustics \cite{Ma} or \cite{MF}. 

\begin{theorem} \label{Mathth}
Let $\g$ be a smooth closed convex curve whose curvature vanishes at some point. Then the symplectic billiard in $\g$ has no caustics.
\end{theorem}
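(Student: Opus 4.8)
```latex
\textbf{Proof proposal.}
The plan is to adapt Mather's classical argument to the symplectic setting. Mather's theorem for Birkhoff billiards rests on the following mechanism: a caustic gives an invariant curve of the billiard map separating the two ``boundary'' circles of the annular phase space; such an invariant circle is the graph of a function over the base; and a point of zero curvature forces the generating function to degenerate so badly that no continuous invariant graph can survive. I would try to reproduce each of these three ingredients for the symplectic billiard.

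First I would set up the variational/action language. A caustic $\Gamma$ of the symplectic billiard is, by definition, a curve to which every chord $xz$ tangent at the reflection point $y$ remains tangent after reflection; equivalently it corresponds to an invariant circle of $\Phi$ that is a graph over the $t$-variable, since by Lemma \ref{twist} the map $\Phi$ is a monotone twist map and invariant circles of monotone twist maps are Lipschitz graphs (Birkhoff's theorem). Thus the existence of a caustic yields a continuous (indeed Lipschitz) invariant curve $s = u(t)$ in the $(t,s)$ coordinates of the phase space. The second step is to analyze what happens at the flat point: parameterize $\g$ so that the curvature vanishes at $\g(t_0)$. Recall from the proof of Lemma \ref{twist} that the twist is governed by $S_{12}(t_1,t_2)=\omega(\g'(t_1),\g'(t_2))$. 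At a point of zero curvature the acceleration $\g''$ vanishes (in arclength, or more invariantly the tangent direction becomes stationary), so the ``spreading'' of the reflected chords degenerates there. I would make this precise by tracking the behavior of the map $\Phi$ and of $S_{12}$ as one endpoint approaches the flat point, showing that the image of a neighborhood of the flat point under $\Phi$ collapses in the $s$-direction.

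The heart of the argument is then a contradiction with the graph property. An invariant Lipschitz graph must be mapped to itself homeomorphically and monotonically; but near the flat point the reflection law forces nearby orbits to converge to the boundary of the phase space (the diagonal or the $x=x^*$ locus), so that any invariant curve passing near the flat point is pinched against the boundary and cannot remain a graph separating the two ends of the annulus. Concretely, I would show that a trajectory grazing the flat point has its ``action coordinate'' $s$ pushed to the extreme allowed value, so an invariant circle through that region would have to touch $\partial\P$, contradicting the fact that caustics are interior invariant circles. The cleanest route is probably to argue that at the flat point the two preimages/images coalesce, violating the strict monotonicity (twist) that an honest invariant graph would inherit.

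The main obstacle I anticipate is making the flat-point degeneration quantitatively sharp in the \emph{affine-invariant} framework natural to symplectic billiards. For Birkhoff billiards one uses the metric angle of incidence, which has a clean behavior at a flat point; here the relevant quantity is the area form $\omega(\g'(t_1),\g'(t_2))$, and a point of zero curvature must be translated into the correct statement about the vanishing of $\g''$ and hence the collapse of $S_{12}$ and of the transverse derivative of $\Phi$. Controlling this degeneration uniformly enough to preclude a merely Lipschitz (not smooth) invariant graph — which is all Birkhoff's theorem guarantees — is the delicate point, and I would expect to need a careful local normal form for $\Phi$ near the flat point rather than a soft argument.
```
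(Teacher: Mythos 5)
Your plan correctly identifies the first ingredient (Birkhoff's theorem: an invariant circle of the monotone twist map $\Phi$ is a Lipschitz graph), but the degeneration you propose to exploit at the flat point is the wrong one, and the contradiction you sketch would not materialize. You repeatedly locate the degeneracy in the twist itself --- in $S_{12}(t_1,t_2)=\omega(\g'(t_1),\g'(t_2))$, in a ``collapse in the $s$-direction,'' in ``coalescing preimages/images violating strict monotonicity,'' and in orbits being ``pinched against $\partial\P$.'' None of this happens: $S_{12}$ involves only \emph{first} derivatives of $\g$, so it stays bounded away from zero whenever the tangents at $\g(t_1)$ and $\g(t_2)$ are transverse, regardless of the curvature at either point. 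The map $\Phi$ remains a perfectly nondegenerate monotone twist map near a flat point, and there is no mechanism forcing an invariant circle passing through that region to approach the boundary of the phase space. A point of zero curvature enters only through \emph{second} derivatives of $\g$, i.e., through $S_{11}$ and $S_{22}$, not through $S_{12}$.

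The paper's proof uses exactly this distinction, in two ways. Analytically, Mather's necessary condition for an invariant circle is $S_{22}(t_1,t_2)+S_{11}(t_2,t_3)<0$ along its orbits; in arc length one computes $S_{22}(t_1,t_2)=\omega(\g(t_1),\g''(t_2))=k(t_2)\,\omega(\g(t_1),i\g'(t_2))$, and likewise for $S_{11}$, so both terms vanish when $k(t_2)=0$ and the criterion fails. Geometrically, the graph property forces all impact points of the invariant family to move monotonically in the positive direction, so consecutive lines $x_1x_3$ and $\bar x_1\bar x_3$ (each parallel, by the reflection law, to the tangent at its middle point) must cross inside the table; but if the curvature vanishes at $x_2$, the tangent direction at $\bar x_2$ agrees with that at $x_2$ to first order, so these two lines are parallel and distinct to first order and cannot intersect inside the table --- a contradiction. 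To repair your argument, replace the analysis of $S_{12}$ and the ``pinching to the boundary'' step by one of these two mechanisms; as written, the central step is a genuine gap.
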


\begin{proof}According to Birkhoff's theorem, an invariant curve of an area preserving twist map is a graph of a function; see, e.g., \cite{KH}. 

Assume that our billiard has a caustic. Then one has a 1-parameter family of chords $x_1 x_2$ of the curve $\g$, corresponding to the points of the invariant curve. The graph property implies that if $\bar x_1 \bar x_2$ is a nearby chord from the same family and $\bar x_1$ has moved along $\g$ in the positive direction from $x_1$, then $\bar x_2$ also has moved in the positive direction from $x_2$. It follows that the chords intersect inside the curve $\g$ and, as a consequence, the caustic, which is the envelope of the lines containing these chords, lies inside the billiard table.

\begin{figure}[hbtp] 
\centering
\includegraphics[width=3.7in]{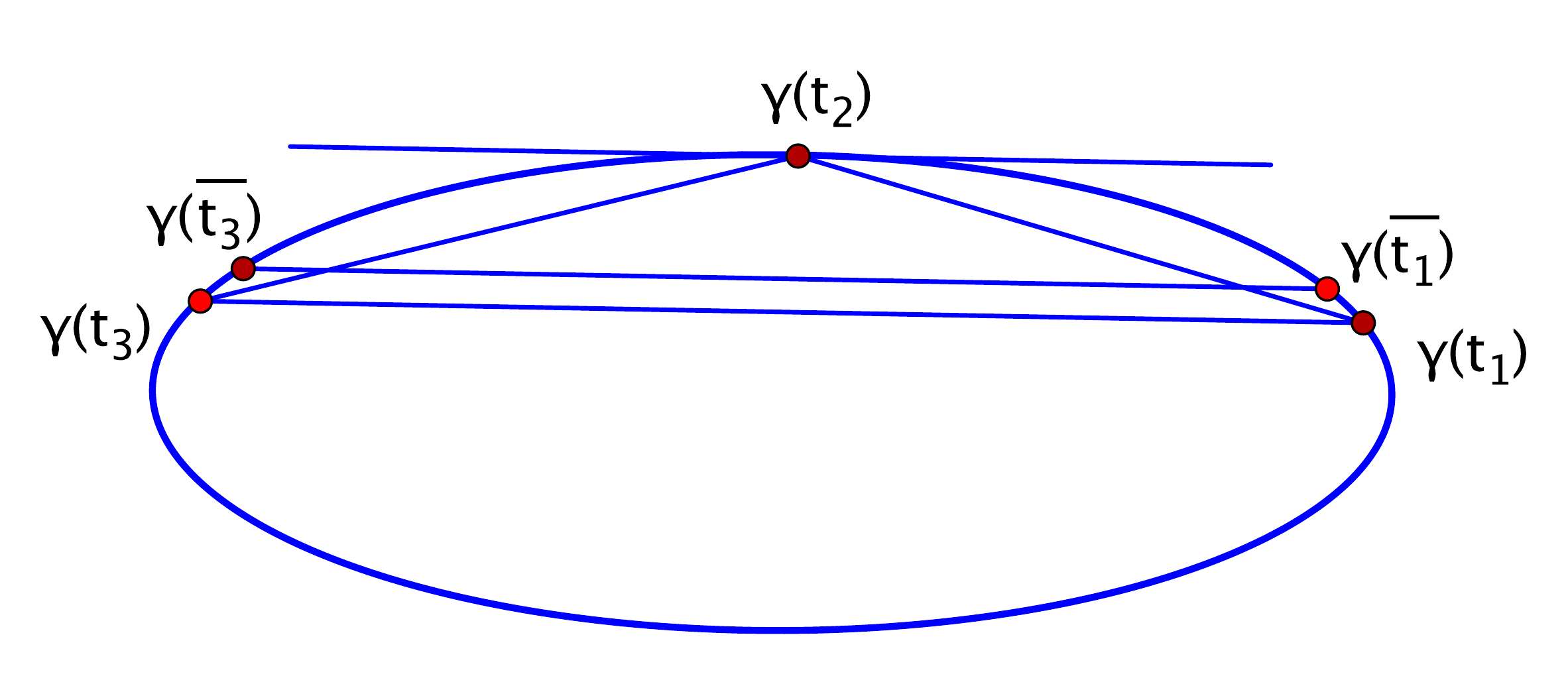}
\caption{The curvature at point $\g(t_2)$ vanishes.}
\label{curvature}
\end{figure}

Assume that a caustic exists, a chord $x_1 x_2$, tangent to the caustic, reflects to $x_2 x_3$, and the curvature at $x_2$ vanishes. Consider an infinitesimally close chord $\bar x_1 \bar x_2$, tangent to the same caustic. Since the curvature at $x_2$ vanishes, the tangent line at $\bar x_2$ is, in the linear approximation, the same as the one at $x_2$. Therefore, in the same linear approximation, the line $\bar x_1 \bar x_3$ is parallel to $x_1 x_3$, hence these lines do not intersect inside the billiard table. This contradiction proves the non-existence of the caustic.

Alternatively, one may use Mather's analytic condition from \cite{Ma}. This necessary condition for the existence of a caustic is $S_{22} (t_1,t_2) + S_{11} (t_2,t_3) < 0$.

Let $t$ be an arc length parameter on $\g$. Since $S(t_1,t_2) = \omega(\g(t_1),\g(t_2))$, we have
\begin{equation*}
S_{22} (t_1,t_2) = \omega(\g(t_1),\g''(t_2)) = k(t_2)\ \omega(\g(t_1),i \g'(t_2)),
\end{equation*}
and likewise for $S_{11} (t_2,t_3)$, 
where $k$ is the curvature of $\g$.  If $k(t_2)=0$, then $S_{22} (t_1,t_2) + S_{11} (t_2,t_3) =0$, violating Mather's  criterion.
\end{proof}
It would be interesting to find analogs of the results of Gutkin and Katok on the regions of the billiard table  free of caustics \cite{GK} and of Bialy  on the part of the phase space free of invariant curves \cite{Bia2}.

\subsubsection{Existence of caustics} \label{excaust}

The existence of caustics is provided by KAM theory, namely, Lazutkin's theorem \cite{Laz}, applied to symplectic billiards.  Let us make a simplifying assumption that the billiard curve $\g$ is infinitely smooth (this can be replaced by sufficiently high finite smoothness) and has everywhere positive curvature. 

\begin{theorem} \label{Lazth}
Arbitrary close to the curve $\g$, there exist smooth caustics for the symplectic billiard map; the union of these caustics has positive measure.
\end{theorem}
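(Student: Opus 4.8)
The plan is to follow Lazutkin's scheme: produce coordinates near the boundary of the phase space in which $\Phi$ becomes an area-preserving perturbation of an integrable twist map, with the nonintegrable part vanishing to high order at the boundary, and then run Moser's invariant curve theorem on a sequence of annuli shrinking onto $\g$. As announced in the introduction, the right coordinates come from the affine parameterization of $\g$: parameterize $\g$ by affine arc length $t\in\R/\Lambda\Z$, normalized by $[\g'(t),\g''(t)]=1$, which is smooth since the curvature is everywhere positive. Then $\g'''=-\kappa\,\g'$, where $\kappa$ is the affine curvature, and $\g$ is an ellipse exactly when $\kappa$ is constant; the ellipse is the integrable model to be perturbed.

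First I would put the reflection law $\g(t_3)-\g(t_1)\parallel\g'(t_2)$ into normal form in the arc-gap $h=t_2-t_1$. Expanding $\g(t_2+h')$ and $\g(t_2-h)$ about $t_2$, where $h'=t_3-t_2$, and requiring the $\g''(t_2)$-component of the difference to vanish yields
\begin{equation*}
\frac{h'^2-h^2}{2}-\frac{\kappa}{24}\,(h'^4-h^4)-\frac{\kappa'}{60}\,(h'^5+h^5)+\cdots=0.
\end{equation*}
Since $\g'''$ is a multiple of $\g'$ there is no $(h'^3+h^3)$ term, and every term surviving at $h'=h$ carries an odd derivative of $\kappa$; hence for constant $\kappa$ one has $h'=h$ identically (so the affine arc-gap is the conserved quantity of the integrable ellipse), while in general
\begin{equation*}
h'=h+\tfrac{1}{30}\,\kappa'(t_2)\,h^4+O(h^5).
\end{equation*}
Taking affine arc length $x=t_1$ as the angle and an action $y$ proportional to $h$, and using $S_{12}=[\g'(t_1),\g'(t_2)]=h+O(h^2)$ from Lemma \ref{twist}, I get coordinates in which $\Phi$ reads $\bar x=x+c\,y$, $\bar y=y+O(y^4)$ with $c>0$, the invariant area form being a smooth positive multiple of $dx\wedge dy$ that vanishes like $y$ at the boundary. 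Thus $\Phi$ is a monotone twist map whose nonintegrable part is $O(y^4)$; this high order is precisely what the affine parameterization buys, and it plays the role of Lazutkin's $\rho^{1/3}$ coordinate in the Birkhoff case.

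The main obstacle is that the twist and the area form degenerate together as $y\to0$: the map tends to the identity and the form collapses, so Moser's theorem cannot be applied on a fixed collar of $\g$. I would resolve this in the standard way by a dyadic decomposition: on each annulus $A_k=\{2^{-k-1}\le y\le 2^{-k}\}$ rescale the action to unit width, whereupon the weight is essentially constant, the twist is nonzero, and the perturbation is small relative to the twist with a bound uniform in $k$ — this is where the $O(y^4)$ order is spent. A finitely smooth version of Moser's invariant curve theorem (e.g. Rüssmann's) then produces, in each $A_k$, invariant circles of Diophantine rotation number; undoing the rescaling, these are smooth caustics of the symplectic billiard, and since they exist for every $k$ they accumulate on $\g$, which is the first assertion. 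This is also where the smoothness hypothesis enters: one needs enough derivatives of $\g$ to control the Taylor remainders and to meet the hypotheses of the chosen theorem, which $C^\infty$ (or sufficiently high finite smoothness) supplies.

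For the measure statement I would use that Moser's theorem leaves uncovered only a set of rotation numbers whose relative measure tends to $0$ as $k\to\infty$; hence the invariant curves fill a fraction of $A_k$ tending to $1$, so they have density $1$ at $\g$. Summing the resulting positive $\Omega$-measures over $k$ shows that the union of the caustics has positive measure, completing the proof. I expect the only genuinely new work to be the affine normal-form computation; once the $O(h^4)$ estimate is in hand, the rescaling-and-Moser argument and the measure bound are routine transcriptions of Lazutkin's theorem.
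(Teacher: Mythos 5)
Your proposal is correct and follows essentially the same route as the paper: the affine parameterization is precisely the paper's device for bringing $\Phi$ into the Lazutkin normal form $(t,\eps)\mapsto(t+\eps,\,\eps+O(\eps^3))$, after which the paper simply invokes Lazutkin's Theorem~2 instead of re-running the dyadic rescaling and Moser-type argument that you sketch. Your expansion is in fact one order sharper than needed (the paper only establishes $\delta-\eps=O(\eps^3)$, while your $\delta-\eps=\tfrac{1}{30}\kappa'(t_2)h^4+O(h^5)$ is correct but not required).
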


\begin{proof}It will be convenient  to use an affine parameterization of $\g$; let us recall the pertinent notions, e.g., \cite{Gug,LSZZ}.

We first give a geometric definition of the affine length of the curve followed by a more computational one.

Given $v\in T_x\gamma$, we consider a parametrization $\g(t)$ with $\g(0)=x$ and $\frac{d\g}{dt}(0)=v$.  Our aim is to define a cubic form $B(x,v)$ on $\g$. For this we denote by $A(\eps,t)$ the area bounded by the curve and the segment $\g(0) \g(\eps)$. We keep $t$ in the notation as a reminder for the parametrization, see below. This area is of third order in $\eps$, hence 
$$
B(x,v):= \lim_{\eps \to 0} \frac{A(\eps,t)}{\eps^3}
$$
is well-defined and, of course, independent of the parametrization (as long as $\g(0)=x$ and $\frac{d\g}{dt}(0)=v$.) 

That $B$ indeed is a cubic form is basically the chain rule. Indeed, replace $v$ by $av$ for some $a\neq0$. If we choose a new parametrization $\g(\tau)$ with $t=a\tau$ then at $t=\tau=0$
\begin{equation}\nonumber
\frac{d\g}{d\tau}=\frac{d\g}{dt}\,\frac{dt}{d\tau}=av.
\end{equation}
We now can define two enclosed areas with respect to these two parametrizations: $A(\eps, t)$ and $A(\eps,\tau)$. Of course, area is area, i.e.,
\begin{equation}\nonumber
A(\eps,t)=A(a\eps,\tau),
\end{equation}
and thus
\begin{equation}\nonumber
B(x,v)=\lim_{\eps \to 0} \frac{A(\eps, t)}{\eps^3}=\lim_{\eps \to 0} \frac{A(a\eps,\tau)}{(a\eps)^3}=a^{-3}\lim_{\eps \to 0} \frac{A(a\eps,\tau)}{\eps^3}=a^{-3}B(x,av).
\end{equation}
This confirms that $B$ is a cubic form. Therefore, $B^{1/3}$ is a 1-form, called an element of affine length. The integral of this form is called the affine length of the curve.

More conveniently for computations, a parameterization $\g(t)$ is affine if $[\g'(t),\g''(t)]=1$ for all $t$. In this case, $\g'''(t) = -k(t) \g'(t)$, where the positive function $k$ is called the affine curvature. The relation between the affine length parameter $t$ and the Euclidean arc length parameter $x$ is $dt = \kappa^{1/3} dx$, where $\kappa$ is the (Euclidean) curvature.

Consider an affine parameterization of the billiard curve. A chord $\g(t_1) \g(t_2)$ is characterized by the numbers $t_1$ and $\eps:=t_2-t_1$. We use $(t,\eps)$ as coordinates on the phase cylinder: $t$ is a cyclic coordinate, $\eps$ is non-negative and bounded above by some function of $t$ (since $t_1 \leq t_2 \leq t_1^*$).

Let us describe the billiard map in these coordinates. Let $\Phi: (t-\eps,\eps) \mapsto (t,\delta)$, where $\delta(t,\eps)$ is a function on the phase space. We claim that 
\begin{equation} \label{deltaeps}
\delta(t,\eps) = \eps + \eps^3 f(t,\eps),
\end{equation}
where $f$ is a smooth function. 

To prove this claim, let $\delta = a_0 + a_1 \eps + a_2 \eps^2 + a_3 \eps^3 + O(\eps^4)$, where $a_i$ are functions of $t$. Since the boundary $\eps=0$ consists of fixed points of the billiard map, $a_0=0$. By definition of the billiard reflection,
\begin{equation} \label{cross}
[\g(t+\delta)-\g(t-\eps),\gamma'(t)]=0,
\end{equation}
where the brackets denote the determinant made by two vectors.

Expand $\g(t+\delta)$ and $\g(t-\eps)$ in Taylor series up to 4th derivative and substitute to (\ref{cross}):
\begin{equation} \label{expand}
\left[(\delta+\eps)\g' + \left(\frac{\delta^2-\eps^2}{2}\right)\g'' + \left(\frac{\delta^3+\eps^3}{6}\right)\g''' + \left(\frac{\delta^4-\eps^4}{24}\right)\g^{''''},\g'\right]=0,
\end{equation}
where we suppress the argument $t$ from $\g(t)$ and its derivatives.

Since $[\g',\g'']=1$, the quadratic term in (\ref{expand}) yields $a_1=1$. Since $[\g',\g''']=0$, the cubic term does not give any information. Since $\g'''=-k\g$, one has $\g''''=-k'\g'-k\g''$, and hence $[\g'''',\g']=k \neq 0$. Therefore the quartic term in (\ref{expand}) yields $a_2=0$,  proving (\ref{deltaeps}). 

Formula (\ref{deltaeps}) implies that the billiard map is given by the formula:
$$
(t,\eps) \mapsto (t+\eps, \eps + \eps^3 g(t,\eps)),
$$
where $g$ is a smooth function. This is an area preserving map, a small perturbation of the integrable map $(t,\eps) \mapsto (t+\eps, \eps)$, satisfying the assumptions of  Lazutkin's theorem (Theorem 2 in \cite{Laz}). Applying this theorem concludes the proof.
\end{proof}
Figure \ref{soup} shows a computer generated phase portrait of the symplectic billiard. It looks like a typical area preserving twist map.

\begin{figure}[hbtp] 
\centering
\includegraphics[width=3.7in]{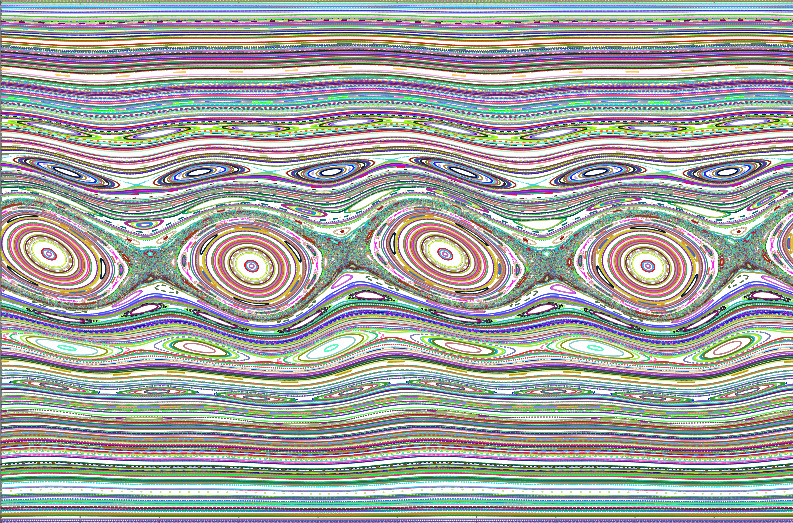}
\caption{Symplectic billiard inside a small perturbation of a circle. The invariant curve consisting of 2-periodic orbits is the lower horizontal line in the picture. We thank Michael Herrmann for providing the numerical simulation producing this picture.}
\label{soup}
\end{figure}

\begin{remark} \label{string}
{\rm A useful tool in the study of billiards is the string construction that recovers a billiard table from its caustic; it produces a 1-parameter family of tables. A similar, area, construction is known for outer billiards, see, e.g., \cite{DT}. In the context of symplectic billiards, we failed to discover a string construction.

}
\end{remark}

\subsection{Periodic caustics, Radon curves,  and $n=3,4$ versions of the Ivrii conjecture} \label{ratcaust}

\subsubsection{Distribution on the space of polygons} \label{distr}

In this section we apply the approach to periodic billiard trajectories via exterior differential systems, developed by Baryshnikov and Zharnitsky \cite{BZ} and, independently, by Landsberg \cite{La}, and applied to outer billiards in \cite{GT,TZ}; see also \cite{BKNZ}.

Suppose that the symplectic billiard has an invariant curve consisting of $n$-periodic points. A periodic point is a polygon $P=(z_1,z_2,\ldots,z_n)$ inscribed into the billiard curve $\g$ and having an extremal area. These $n$-gons form a 1-parameter family of inscribed polygons, connecting  $P=(z_1,z_2,\ldots,z_n)$ to the polygon $P'=(z_2,\ldots,z_n,z_1)$ with cyclically permuted vertices. The area remains constant in this  1-parameter family of area-extremal polygons.

Assume that $n\ge 3$. One can reverse the situation: start with an $n$-gon $P$ and try to construct a billiard curve $\g$ that has an invariant curve of $n$-periodic points including $P$. The main observation is that the directions of $\g$ at the vertices $z_i,\ i=1,\ldots,n$ are uniquely determined: they are the directions of the diagonals $z_{i+1} z_{i-1}$ (here and elsewhere the indices are taken in the cyclic order, so that $n+1=1$).

Consider the $2n$-dimensional space of $n$-gons in $\R^2$, and let  ${\mathcal P}_n$ be its open dense subset given by the condition that $z_{i+1}-z_{i-1}$ is not collinear with $z_{i+2}-z_{i}$ (in particular, each vector $z_{i+1}-z_{i-1}$ is non-zero).
Restricting the motion of the $i$-th vertex $z_i$ to the direction of the diagonal $z_{i+1} z_{i-1}$ defines an $n$-dimensional distribution ${\mathcal D}$ on ${\mathcal P}_n$ (its analog for the usual inner billiards was called in \cite {BZ} the Birkhoff distribution). An invariant curve of $n$-periodic points of the symplectic  billiard gives rise to a curve in ${\mathcal P}_n$, tangent to ${\mathcal D}$. We call curves tangent to $\mathcal{D}$ horizontal curves. We point out that, by the very definition of $\mathcal{P}_n$, we only consider invariant curves consisting of non-degenerate polygons, i.e., polygons in $\mathcal{P}_n$.

Let $A: {\mathcal P}_n \to \R$ be the algebraic area of a polygon given by
$$
A = \frac{1}{2} \sum_{i=1}^n \omega(z_i,z_{i+1}).
$$

\begin{theorem} \label{Bdistr}
The distribution ${\mathcal D}$ is tangent to the level hypersurfaces of the area function $A$. The distribution ${\mathcal D}$ is totally non-integrable on these level hypersurfaces: the tangent space at every point is generated by the vectors fields tangent to ${\mathcal D}$ and by their first commutators.
\end{theorem}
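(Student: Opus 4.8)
The plan is to prove the two assertions separately. For the first, I would compute the exterior derivative of the area function $A$ and show that the distribution $\mathcal D$ annihilates it. Since the $i$-th vertex $z_i$ is constrained by $\mathcal D$ to move along the direction of the diagonal $z_{i+1}z_{i-1}$, a basis for $\mathcal D$ is given by $n$ vector fields $V_i$, where $V_i$ moves only $z_i$, in the direction $z_{i+1}-z_{i-1}$. From the formula $A = \tfrac12\sum_i \omega(z_i,z_{i+1})$, only the two terms containing $z_i$ contribute to the derivative along $V_i$, namely $\omega(z_i,z_{i+1})$ and $\omega(z_{i-1},z_i)$. Differentiating in the direction $\delta z_i = z_{i+1}-z_{i-1}$ gives
\begin{equation*}
\tfrac12\bigl[\omega(z_{i+1}-z_{i-1},z_{i+1}) + \omega(z_{i-1},z_{i+1}-z_{i-1})\bigr]
= \tfrac12\bigl[-\omega(z_{i-1},z_{i+1}) + \omega(z_{i-1},z_{i+1})\bigr] = 0,
\end{equation*}
using antisymmetry and bilinearity of $\omega$. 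Hence $dA(V_i)=0$ for every $i$, so $\mathcal D$ is tangent to the level sets of $A$. This is exactly the infinitesimal reflection of the fact, recalled before the theorem, that the area is constant along the $1$-parameter family of area-extremal inscribed polygons.

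For the second assertion I would argue that at each point the span of $\mathcal D$ together with the first commutators $[V_i,V_j]$ fills the entire $(2n-1)$-dimensional tangent space of the level hypersurface $A=\mathrm{const}$. I would first record the easy brackets: since $V_i$ and $V_j$ move disjoint vertices when $|i-j|\ge 2$ (cyclically), they commute, so only the \emph{adjacent} commutators $[V_i,V_{i+1}]$ are potentially nonzero. The computation of $[V_i,V_{i+1}]$ is the crux: moving $z_{i+1}$ changes the direction $z_{i+2}-z_i$ along which $V_{i+1}$ pushes $z_{i+1}$, and reciprocally moving $z_i$ changes the direction $z_{i+1}-z_{i-1}$ governing $V_i$, so the bracket is a genuinely new vector, with components affecting the vertices $z_i$ and $z_{i+1}$ in directions \emph{transverse} to the original diagonals (this transversality is guaranteed by the non-collinearity condition built into the definition of $\mathcal P_n$). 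I would then verify that the $n$ vectors $V_i$ and the $n$ brackets $[V_i,V_{i+1}]$ are linearly independent modulo the single constraint $dA=0$, giving $2n-1$ independent directions, which is the full dimension of the level hypersurface.

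The main obstacle I anticipate is precisely this linear-independence / spanning count for the commutators. The bracket $[V_i,V_{i+1}]$ will be a vector supported on vertices $z_i$ and $z_{i+1}$ whose two components are controlled by how the diagonal directions rotate; I expect its components to be nonzero multiples of vectors transverse to $z_{i+1}-z_{i-1}$ and to $z_{i+2}-z_i$ respectively, with the coefficients being derivatives of the normalization of the diagonal directions. Organizing the $2n$ tangent vectors $\{V_i\}\cup\{[V_i,V_{i+1}]\}$ and showing their span has codimension exactly one inside $T\mathcal P_n$ (the missing direction being $\nabla A$) is the step where one must be careful. I would handle this by writing the generators in a convenient frame adapted to the diagonals at each vertex, reducing the independence question to a block-structured determinant that is nonzero on the open set $\mathcal P_n$. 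The first part, by contrast, is the short bilinear-algebra computation above and should present no difficulty.
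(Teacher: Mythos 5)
Your first assertion is proved completely and is essentially the paper's argument: the paper writes $dA=\tfrac12\sum_i\theta_i$ with $\theta_i=(q_{i+1}-q_{i-1})\,dp_i-(p_{i+1}-p_{i-1})\,dq_i$ and checks $\theta_i(v_j)=0$, which is exactly your bilinear cancellation $\omega(z_{i+1}-z_{i-1},z_{i+1})+\omega(z_{i-1},z_{i+1}-z_{i-1})=0$. No issues there.

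For the non-integrability, your strategy is the right one, but the decisive step is exactly the one you defer: you never compute the adjacent brackets nor verify that they contribute $n-1$ new directions modulo $\mathcal D$, and as written the proof is incomplete. Two remarks on closing it. First, your guess about the structure of the bracket is correct but slightly off in the details: with the unnormalized generators $v_i=(p_{i+1}-p_{i-1})\partial_{p_i}+(q_{i+1}-q_{i-1})\partial_{q_i}$ there are no ``derivatives of the normalization'' floating around; one gets cleanly
\begin{equation*}
[v_{i+1},v_i]=(p_{i+2}-p_i)\tfrac{\partial}{\partial p_i}+(q_{i+2}-q_i)\tfrac{\partial}{\partial q_i}
+(p_{i+1}-p_{i-1})\tfrac{\partial}{\partial p_{i+1}}+(q_{i+1}-q_{i-1})\tfrac{\partial}{\partial q_{i+1}},
\end{equation*}
i.e.\ the $z_i$-component is the \emph{other} diagonal $z_{i+2}-z_i$ and the $z_{i+1}$-component is $z_{i+1}-z_{i-1}$, transverse to the allowed directions precisely by the definition of $\mathcal P_n$. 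Second, the ``block-structured determinant'' you anticipate can be avoided entirely by dualizing: since $\mathcal D$ is the common kernel of the $\theta_i$ and has codimension $n-1$ in the level hypersurface, it suffices to show that the $(n-1)\times n$ matrix $\theta_i([v_j,v_{j+1}])$ has rank $n-1$. The explicit bracket gives $\theta_i([v_i,v_{i+1}])=-\theta_{i+1}([v_i,v_{i+1}])=\det(z_{i+1}-z_{i-1},\,z_{i+2}-z_i)\neq 0$ and $\theta_j([v_i,v_{i+1}])=0$ for $j\neq i,i+1$, so the square submatrix with $i,j=1,\dots,n-1$ is triangular with nonzero diagonal and the rank count is immediate. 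You should either carry out this computation or an equivalent one; without it the second half of the theorem is only a plausible outline.
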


\begin{proof}
Let $z_i=(p_i,q_i)\in\R^{2n},\ i=1,\ldots,n$. Introduce the vector fields
\begin{equation} \label{vectf}
v_i = (p_{i+1}-p_{i-1}) \frac{\partial}{\partial p_i} + (q_{i+1}-q_{i-1}) \frac{\partial}{\partial q_i},
\end{equation}
where, as always, the indices are understood cyclically.
The fields $v_1,\ldots,v_n$ are linearly independent and they span ${\mathcal D}$ at every point. 

Geometrically, it is obvious that the fields $v_i$ preserve the area of a polygon. Analytically, 
$$
dA = \frac{1}{2} \sum_{i=1}^n (q_{i+1}-q_{i-1}) dp_i - (p_{i+1}-p_{i-1}) dq_i.
$$
Let 
$$
\theta_i = (q_{i+1}-q_{i-1}) dp_i - (p_{i+1}-p_{i-1}) dq_i,
$$
so that $dA = \sum \theta_i$. 
Then $\theta_i (v_j)=0$ for all $i,j$, and it follows that $dA$ vanishes on ${\mathcal D}$.
The common kernel of the 1-forms $\theta_i$ is the distribution ${\mathcal D}$.

Next, one has $[v_i,v_j]=0$ for $|i-j|\ge 2$, and 
$$
[v_{i+1},v_{i}] = (p_{i+2}-p_i)\frac{\partial}{\partial p_i} + (p_{i+1}-p_{i-1})\frac{\partial}{\partial p_{i+1}} +
(q_{i+2}-q_i)\frac{\partial}{\partial q_i} + (q_{i+1}-q_{i-1})\frac{\partial}{\partial q_{i+1}}.
$$
It follows that 
\begin{equation} \label{mat}
\theta_i([v_i,v_{i+1}]) = - \theta_{i+1} ([v_i,v_{i+1}]) = \det(z_{i+1}-z_{i-1},z_{i+2}-z_i) \neq 0,
\end{equation}
where the last inequality is due to the definition of ${\mathcal P}_n$. 

The distribution ${\mathcal D}$ has codimension $n-1$ on a constant-area hypersurface, and it suffices to show that the rank of the matrix
$\theta_i ([v_j,v_{j+1}]),\ i=1,\ldots,n-1, j=1,\ldots n$, is $n-1$. But it follows from (\ref{mat}) that the square submatrix with $i,j=1,\ldots,n-1$ is upper-triangular with non-zero diagonal entries. This concludes the proof.
\end{proof}
As in \cite{BZ}, one can draw two conclusions from Theorem \ref{Bdistr}. 

The first is that one has an extreme flexibility of deforming horizontal curves of the distribution ${\mathcal D}$, and hence of  billiard curves, still keeping an invariant curve consisting of $n$-periodic points (the technical statement is that there is a Hilbert manifold worth of such curves, obtained by deforming a circle and depending on functional parameters). We do not dwell on these technical issues; see \cite{BZ} for a detailed discussion in the usual billiard set-up or \cite{GT} for the case of outer billiards.  However, we present one concrete example for $n=4$.

\subsubsection{Radon curves} \label{Radonc}

 A centrally symmetric convex closed planar curve $\g$ is called a Radon curve if it has the property depicted in Figure \ref{Radon1}. 
 
\begin{figure}[hbtp] 
\centering
\includegraphics[width=2in]{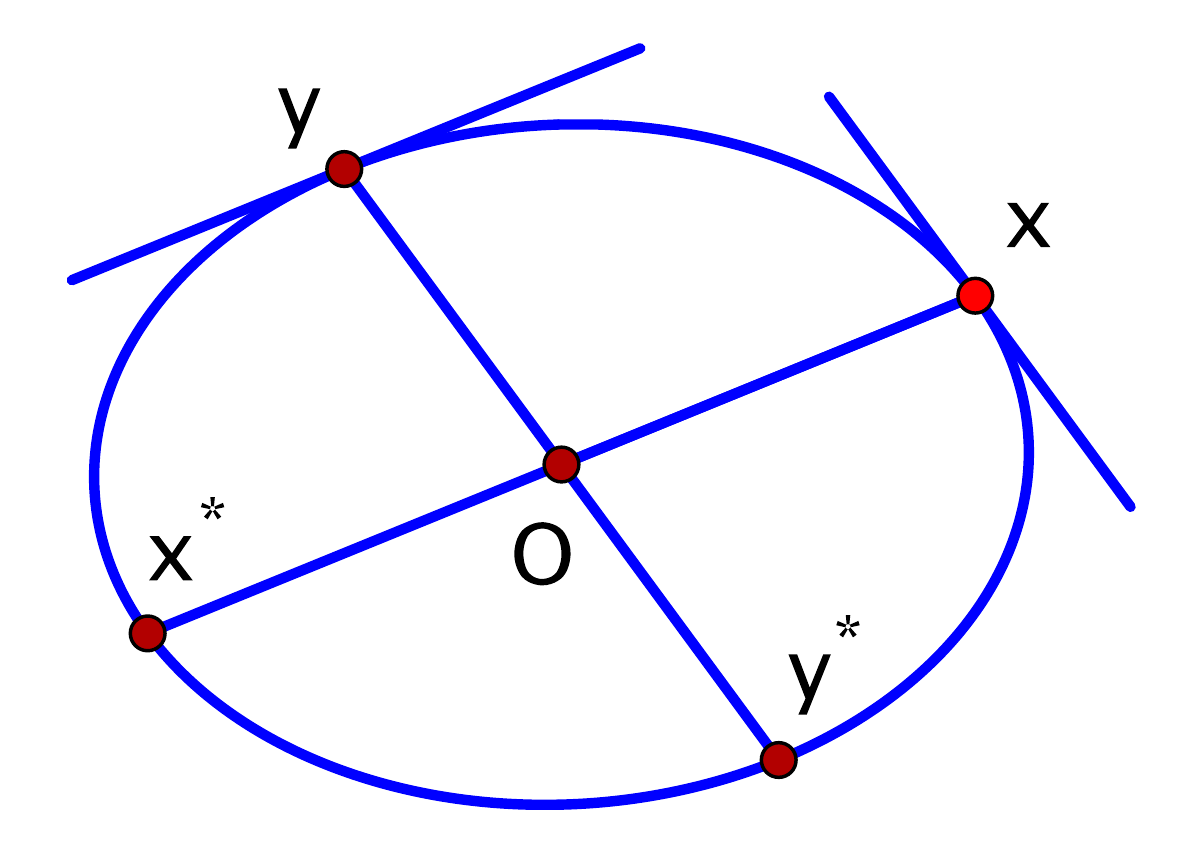}
\caption{The tangent $T_x \g$ is parallel to $Oy$ if and only if $T_y \g$ is parallel to $Ox$, similar to the conjugate diameters of an ellipse.}
\label{Radon1}
\end{figure}

Radon curves are unit circles of a particular kind of Minkowski (normed) planes, called Radon planes, that share many features with Euclidean planes. In particular, in a Radon plane, normality (also called the Birkhoff orthogonality) is a symmetric relation.
Radon curves have been extensively studied since their introduction by Radon in 1916; see \cite{MS} for a modern account. 

The relevance of Radon curves for us is that they have  invariant circles consisting of 4-periodic points: the parallelogram $xyx^*y^*$ is a symplectic billiard orbit for every $x$. Conversely, if a centrally symmetric curve has an invariant circle consisting of 4-periodic points, then it is a Radon curve.

An obvious example of Radon curve is an ellipse. However Radon curves are very flexible, depending on a functional parameter. For instance, here is a construction of a $C^1$-smooth Radon curve, see \cite{MS}. 

Let $a$ and $b$ be two vectors with $[a,b]=1$. Connect $a$ and $b$ by a smooth convex curve $C_1$ that lies in the parallelogram spanned by $a$ and $b$ and is tangent to its sides at the end points. Parameterize $C_1$ by a parameter $t\in[0,T]$ so that $C_1(0)=a,\ C_1(T)=b$, and $[C_1(t),C_1'(t)]=1$ for all $t$. The latter condition means that the rate of change of the sectorial area is constant. Differentiating, we obtain $[C_1(t),C_1''(t)]=0$, hence the acceleration of the curve $C_1$ is proportional to the position vector.

Since $C'_1(0)$ is proportional to $b$, and $[a,b]=1$, we have $C'_1(0)=b$. For the same reason, $C'_1(T)=-a$. 
Now consider the curve $C_2(t)=C'_1(t)$. This curve lies in the second quadrant and connects  $b$ with $-a$. Furthermore, $C_2'(0)=C_1''(0)$ is proportional to $C_1(0)=a$, and $C_2'(T)=C_1''(T)$ is proportional to $C_1(T)=b$. Therefore the union of $C_1, C_2$ and their reflections in the origin is a $C^1$-smooth curve.

Since $[C_1(t),C_2(t)]=1$, one has $[C'_1(t),C_2(t)]+[C_1(t),C'_2(t)]=0$. Thus one summand vanishes if and only if so does the other. This implies  the Radon property. 

For example, one can combine $\ell_p$- and $\ell_q$-norms with $1<p,q < \infty$ and $1/p+1/q=1$, taking $C_1$ and $C_2$ to be the quarters of their respective unit circles, see Figure \ref{Radon}. It is worth mentioning that, as $p\to\infty,\ q\to 1$, the respective Radon curve tends to an affine-regular hexagon.

\begin{remark}
{\rm
The above construction, in general, gives rise to $C^1$-smooth Radon curves. It can obviously adapted to produce $C^k$ or $C^\infty$-smooth examples.
}
\end{remark}

\begin{figure}[hbtp] 
\centering
\includegraphics[width=2in]{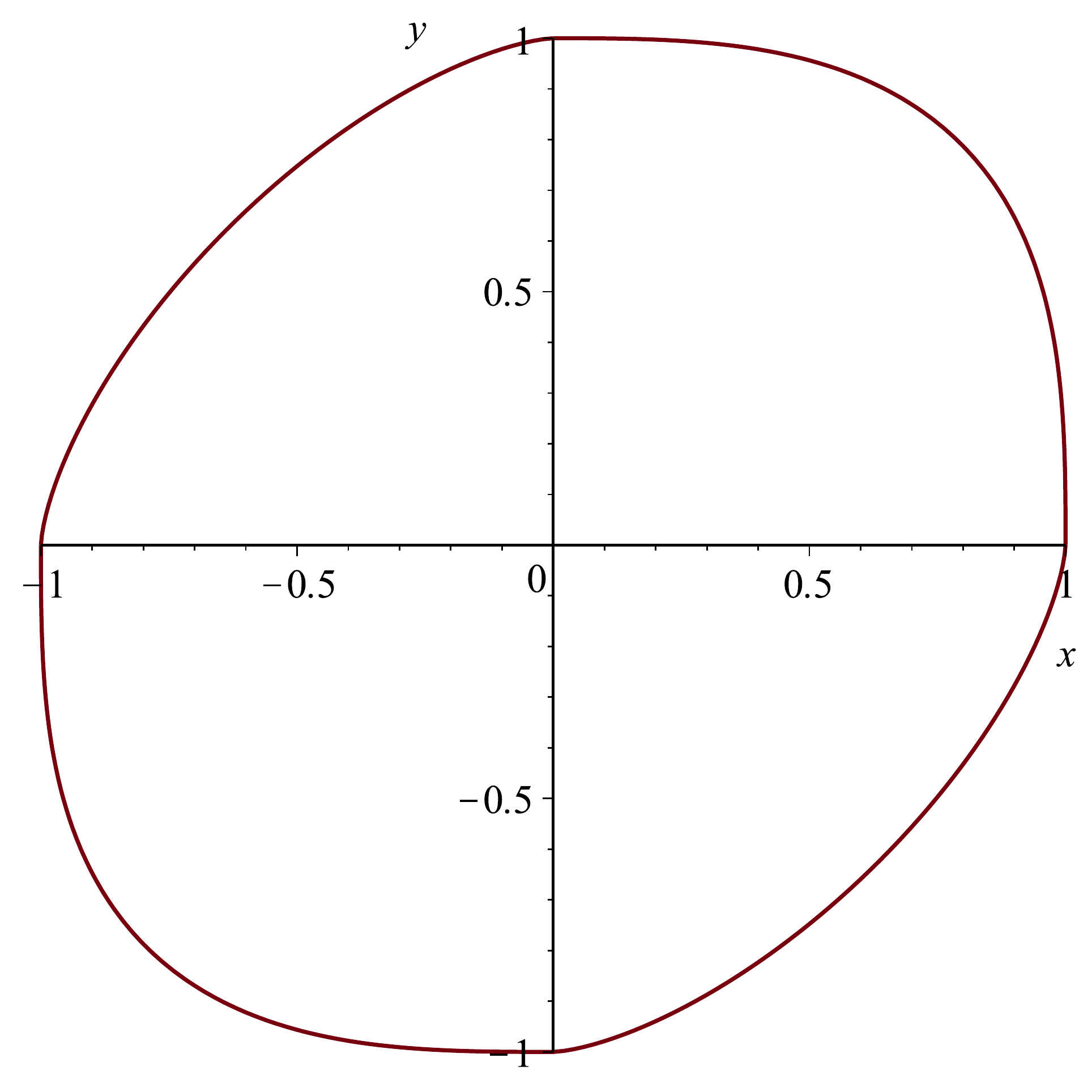}
\caption{A Radon curve combining $\ell_p$- and $\ell_q$-norms with $p=3$ and $q=\frac32$.}
\label{Radon}
\end{figure}

\subsubsection{Scarcity of periodic points} \label{scar}

The second consequence of Theorem \ref{Bdistr} is a version of the $n=3$ and $n=4$ cases of the Ivrii conjecture. This conjecture asserts that the set of periodic trajectories of the usual billiard has zero measure; a seemingly weaker (but, in fact, equivalent) version is that this set has  empty interior.  For period $n=3$, the Ivrii conjecture is proved, by a number of authors and in different ways, in  \cite{BZ,Ry,St,Vo,Wo}; currently, the best known result is for $n=4$, see \cite{GlK}. See also \cite{GT,TZ,Tu} for periods $n=3,4,5,6$ for outer billiards. 

\begin{theorem} \label{Ivrth}
The set of 3-periodic points of the planar symplectic billiard has empty interior. 
If $\g$ is strictly convex, then the set of 4-periodic points also has empty interior.
\end{theorem}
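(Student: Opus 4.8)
The plan is to run the exterior differential systems argument of \cite{BZ} in reverse, using the total non-integrability established in Theorem \ref{Bdistr}. Suppose, for contradiction, that the set of $n$-periodic points ($n=3$, or $n=4$ under strict convexity) has nonempty interior in the two-dimensional phase space $\mathcal{P}$. Then the corresponding periodic $n$-gons form a two-parameter family, which I regard as a smooth surface $N$ in the space ${\mathcal P}_n$ of inscribed $n$-gons. The first step is to check that $N$ is an integral surface of the distribution $\mathcal{D}$: any deformation of an orbit within the family keeps each vertex $z_i$ on $\gamma$ and preserves the reflection condition, so $\dot z_i$ is tangent to $\gamma$ at $z_i$ and hence parallel to the diagonal $z_{i+1}z_{i-1}$; thus $\dot P=\sum_i c_i v_i\in\mathcal{D}$. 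Since $dA$ annihilates $\mathcal{D}$ by Theorem \ref{Bdistr}, the area $A$ is constant on $N$, so $N$ lies in a single level hypersurface $\Sigma=\{A=\mathrm{const}\}$, on which $\mathcal{D}$ is totally non-integrable. I record one feature that distinguishes $N$ from an arbitrary integral surface: the vertex maps $\pi_i\colon N\to\R^2$, $P\mapsto z_i(P)$, all have image in the \emph{common} one-dimensional curve $\gamma$.

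The second step is to analyze which two-planes in $\mathcal{D}$ can be tangent to an integral surface. If $N$ is an integral surface then, for vector fields $X,Y$ tangent to $N$, the bracket $[X,Y]$ is again tangent to $N$, hence lies in $\mathcal{D}$; therefore $T_PN$ must be isotropic for the Levi form $L(X,Y):=[X,Y]\bmod\mathcal{D}$ with values in $T_P\Sigma/\mathcal{D}_P$. I would read off the isotropic two-planes directly from the bracket relations in the proof of Theorem \ref{Bdistr}: $[v_i,v_j]\equiv0\bmod\mathcal{D}$ for $|i-j|\ge2$, together with the nonvanishing $\theta_i([v_i,v_{i+1}])=\det(z_{i+1}-z_{i-1},z_{i+2}-z_i)$ from (\ref{mat}). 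For $n=3$ the Levi form is a surjection $\Lambda^2\mathcal{D}\to T_P\Sigma/\mathcal{D}_P$ of a three-dimensional onto a two-dimensional space, so its kernel is a line; as every element of $\Lambda^2$ of a three-dimensional space is decomposable, this line determines a \emph{unique} isotropic two-plane $\Pi_P\subset\mathcal{D}_P$. Consequently $T_PN=\Pi_P$, and $N$ would have to be an integral surface of the plane field $\Pi$.

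The final and decisive step is to derive a contradiction. Isotropy only gives $[X,Y]\in\mathcal{D}$, whereas a genuine integral surface requires the stronger involutivity $[X,Y]\in\Pi$, i.e. the $\mathcal{D}$-component of the bracket must not leave $\Pi$. I would compute this $\mathcal{D}$-component for sections of $\Pi$ using the explicit form of $[v_i,v_{i+1}]$ and show it is nonzero, so $\Pi$ is not involutive and no such $N$ exists; equivalently, one may substitute the unique candidate tangent plane into the common-curve constraint of the first step and obtain an overdetermined, inconsistent system for a parameterization of $\gamma$. This settles $n=3$.

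The main obstacle is the case $n=4$, which is precisely why strict convexity is imposed. Here $\dim\mathcal{D}=4$ and the Levi form has rank three, so isotropic two-planes are no longer unique: they form a positive-dimensional family that even contains the spurious plane $\langle v_1,v_3\rangle$, on which $[v_1,v_3]=0$, so that this plane is isotropic \emph{and} involutive and does integrate. However, $\langle v_1,v_3\rangle$ moves $z_1,z_3$ along fixed straight lines and cannot keep the vertices on a single curve, so the extra structure recorded in the first step must be invoked to discard it. The task is therefore to show that among the isotropic two-planes, those compatible with all four vertices sweeping out one convex curve are non-involutive. Strict convexity enters exactly here: it guarantees $\det(z_{i+1}-z_{i-1},z_{i+2}-z_i)\neq0$ along the orbit, keeping the rank computation of (\ref{mat}) valid and excluding the degenerate, non-inscribable integral elements, so that no admissible involutive plane field survives and no two-parameter family of $4$-periodic orbits can exist.
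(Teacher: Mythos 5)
Your $n=3$ argument is essentially the paper's: isotropy for the Levi form forces the tangent plane to be $\langle v_1-v_3,\,v_2-v_3\rangle$, and the horizontal part of the bracket, $[v_1-v_3,v_2-v_3]=v_1+v_2+v_3$, does not lie in that plane, so no integral surface exists. You leave this last computation as a promise, but it goes through exactly as you describe, so that half is fine.

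The $n=4$ case contains a genuine gap. You correctly note that isotropic $2$-planes are no longer unique and you dispose of $\langle v_1,v_3\rangle$ by the common-curve constraint, but your plan for the remaining isotropic planes --- show that those ``compatible with all four vertices sweeping out one convex curve are non-involutive'' --- cannot work: the plane $\langle v_1-v_3,\,v_2-v_4\rangle$ is isotropic \emph{and} spanned by two commuting vector fields (a direct computation from \eqref{vectf} gives $[v_1-v_3,v_2-v_4]=0$, since $v_1-v_3$ translates the diagonal $z_1z_3$ parallel to $z_2z_4$ and vice versa), hence it is involutive and integrable, and no Frobenius-type computation can exclude it. Nor does the nonvanishing of the determinants in \eqref{mat} help here; indeed the square carries an open set of $4$-periodic orbits tangent to exactly this plane field (Remark \ref{parsides}), so the obstruction must use the global geometry of a strictly convex $\g$ rather than the algebra of $\mathcal{D}$. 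The ingredient you are missing is the paper's \emph{monotonicity condition}: for a $4$-periodic orbit one has $x_3=x_1^*$ and $x_4=x_2^*$, and because $x\mapsto x^*$ is an orientation-preserving involution of a strictly convex curve, any deformation through $4$-periodic orbits must move all four vertices in the \emph{same} direction along $\g$. The flow of $v_1-v_3$ moves $x_1$ and $x_3=x_1^*$ by the same vector of $\R^2$, hence in \emph{opposite} directions along $\g$ (and the flow of $v_1$ alone moves $x_1$ while fixing the rest), so both the involutive plane above and $\langle v_1,v_3\rangle$ are excluded at once. Without this, or some equivalent use of the involution $x\mapsto x^*$, your case analysis cannot be closed.
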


\begin{proof}
First we consider \underline{the case $n=3$.}  The proof is essentially the same as in \cite{BZ} and \cite{GT}.

Let $M$ be the 5-dimensional manifold of triangles of unit area and ${\mathcal D}$ be the above defined distribution on it. We point out that triangles of unit area lie automatically in the space $\mathcal{P}_3$, i.e., $M\subset\mathcal{P}_3$, on which $\mathcal{D}$ is defined. If the set of 3-periodic points of the symplectic billiards map contains an open set, then the respective triangles form a horizontal surface $U \subset M$ (i.e., tangent to $\mathcal{D}$.) Choosing local coordinates in this surface yields a pair of commuting  vector fields on $U$.

Since these fields are horizontal, they can be written in the form 
$f_1 v_1+f_2v_2+f_3v_3$  and $g_1 v_1+g_2v_2+g_3v_3$, where $f_i,g_i$ are functions and $v_i$ are as in (\ref{vectf}).
Without loss of generality, assume that $f_1\neq 0$ and $g_2\neq 0$. Taking linear combinations, we obtain vector fields $w_1=v_1 + fv_3,\ w_2=v_2+gv_3$ with the property that $[w_1,w_2]$ is a linear combination of $w_1$ and $w_2$ with functional coefficients. In particular, $[w_1,w_2]$ is horizontal.

Let $d=[z_2-z_1,z_3-z_1]$ be twice the area of the triangle $z_1z_2z_3$.
One calculates
$$
[w_1,w_2] = [v_1,v_2] -f [v_2,v_3] - g [v_3,v_1] \mod {\mathcal D},
$$
and hence, using (\ref{mat}),
$$
\theta_1([w_1,w_2]) = d(1+g),\ \theta_2([w_1,w_2]) = -d (1+f).
$$
Since $d\ne 0$, it follows that $f=g=-1$, and 
$$
[w_1,w_2] = [v_1,v_2] + [v_2,v_3] + [v_3,v_1] = v_1+v_2+v_3.
$$ 
But this vector field is not a linear combination of $w_1=v_1-v_3$ and $w_2=v_2-v_3$. This contradiction concludes the proof of this case.

Now consider \underline{the case $n=4$.} Let $x_1 x_2 x_3 x_4$ be a 4-periodic orbit in the curve $\g$. Then the tangents $T_{x_1} \g$ and $T_{x_3} \g$ are parallel to the diagonal $x_2 x_4$, and hence $x_3=x_1^*$. Likewise, $x_4=x_2^*$.

Let $\bar x_1 \bar x_2 \bar x_3 \bar x_4$ be another 4-periodic orbit, a perturbation of the first one. Assume that point $\bar x_1$ has moved from point $x_1$ in the positive direction (with respect to the orientation of the curve $\g$). We claim that then the other points $\bar x_i,\ i=2,3,4$, have also moved in the positive direction. We shall refer to this property as the monotonicity condition.

To prove the monotonicity condition, note that since $\g$ is strictly convex, the relation $x\mapsto x^*$ is an orientation preserving involution. Since $\bar x_3=\bar x_{1}^*$, this point has moved in the positive direction. Hence the segment $\bar x_1 \bar x_3$ has turned in the positive sense (compared to $x_1 x_3$). Since $T_{\bar x_2} \g$ is parallel to $\g(\bar x_1) \g(\bar x_3)$, point $\bar x_2$ has moved in the positive direction as well, and so has $\bar x_4=\bar x_{2}^*$.

Now we argue similarly to the $n=3$ case. Let $M$ be the manifold of quadrilaterals of constant area, and let ${\mathcal D}$ be the respective 4-dimensional distribution. Again we claim that 4-periodic points of the billiards map give rise to polygons which automatically lie in $\mathcal{P}_4$. Indeed, by the definition of the symplectic billiard map the characteristic directions of $\g$ at $x_1$ and $x_3$ are parallel to $x_2x_4$ and similarly the characteristic directions at $x_2$ and $x_4$ are parallel to $x_1x_3$. Now, if we assume that $x_1x_3$ and $x_2x_4$ are parallel we find find four points with the same characteristic direction which directly contradicts the strict convexity of $\g$ which allows for exactly two such point.

Therefore, we now assume that there exists a horizontal surface in $M$. Then one has two linearly independent commuting vector fields, $\xi$ and $\eta$, that are lineal combinations of the vector fields $v_1,\ldots,v_4$ with functional coefficients. 

Let $\theta_1,\ldots,\theta_4$ be as above. It follows from (\ref{mat}) that 
$$
\theta_i([v_i,v_{i+1}]) = - \theta_{i+1} ([v_i,v_{i+1}]) = 1,\ i=1,\ldots,4,
$$
because the determinant involved is twice the oriented area of the quadrilateral.

Without loss of generality, assume that the coefficient of $v_1$ in $\xi$ is non-zero. In the following we distinguish two cases. If the coefficient of $v_2$ or of $v_4$ in $\eta$ is non-zero, one can replace $\xi$ and $\eta$ by their linear combinations so that, perhaps after reversing the order of indices, one has
$$
\xi = v_1 + f v_3 + g v_4,\ \eta = v_2 + \bar f v_3 + \bar g v_4,
$$ 
and $[\xi,\eta]$ is a linear combination of $\xi$ and $\eta$ with functional coefficients. We refer to this as case 1.

If the coefficients of $v_2$ and of $v_4$ in $\eta$ vanish then one can replace $\xi$ and $\eta$ by their linear combinations so that $\xi = v_1, \eta = v_3$. This is case 2.

In case 1, 
$$
[\xi,\eta]= [v_1,v_2]-\bar g [v_4,v_1] - f [v_2,v_3] + (f \bar g - \bar f g) [v_3,v_4] \mod {\mathcal D}.
$$
Evaluating $\theta_i,\ i=1,\ldots,4$ on this commutator and equating to zero yields
$$
f = \bar g = -1,\ \bar f g = 0.
$$
Without loss of generality, assume that $g = 0$. Then $\xi = v_1 - v_3$ is a vector field tangent to the disc $U$ consisting of 4-periodic orbits. The flow of this field moves points $x_1$ and $x_3$ in the opposite directions, contradicting the monotonicity condition.

Likewise, in case 2, $\xi = v_1$. The flow of this field moves point $x_1$ in the positive direction, but leaves the other points fixed, again contradicting the monotonicity condition. This completes the proof.
\end{proof}
\begin{remark} \label{parsides}
{\rm As follows from the analysis in section \ref{polygons}, the monotonicity condition does not hold for
4-periodic orbits in a square: in fact, when such an orbit is perturbed, the points $x_1$ and $x_3$ move in the opposite directions, and so do $x_2$ and $x_4$. In particular, this shows that \textit{strict} convexity is necessary.
}
\end{remark}

\subsection{Interpolating Hamiltonians and area spectrum} \label{actspec}

\subsubsection{Area spectrum of symplectic billiard} \label{arsp}

%
Consider the maximal action, that is, the maximal area of a simple $n$-gon, inscribed in the billiard curve $\g$. Let $A_n$ be this area. We are interested in the asymptotics of $A_n$ as $n\to\infty$.

The theory of interpolating Hamiltonians, applied to the symplectic billiard, implies that the symplectic billiard  map equals an integrable symplectic map, the time-one map of a Hamiltonian vector field, composed with a smooth symplectic map that fixes the boundary of the phase space point-wise to all orders, see \cite{Me,MM} and \cite{PS,Si}; see also \cite{Ta1} for an application to  outer billiards.
In particular, this theory provides an asymptotic expansion of $A_n$ in negative even powers of $n$:
$$
A_n \sim a_0 + \frac{a_1}{n^2} + \frac{a_2}{n^4} + \frac{a_3}{n^6} + \ldots
$$ 
In our situation, the coefficient $a_0$ is, of course, the area of the billiard table. The next two coefficients, $a_1$ and $a_2$, were found in \cite{McV,Lud}: 
\begin{equation} \label{a12}
a_1= \frac{L^3}{12},\quad a_2=-\frac{L^4}{240} \int_0^L k(t) dt, 
\end{equation}
where $t$ is the affine parameter on the billiard curve, $L=\int_{\g} dt$ is the total affine length, and $k$ the affine curvature of $\g$. For comparison, for  outer billiards, that is for the circumscribed polygons of the least area, one has $a_1 = L^3/24$. 

In affine geometry one also has an isoperimetric inequality for all strictly convex closed curves, with equality only for ellipses, see \cite{Lut}. It reads
\begin{equation}\label{affine_isop_ineq}
L^3 \le 8\pi^2 A,
\end{equation} 
where $A$ is the area. We point out that it ``goes in the wrong direction'' if compared to the usual isoperimetric inequality. Similarly to \cite{Ta1}, one has the following immediate consequence.

\begin{theorem} \label{hear}
The first two coefficients, $a_0$ and $a_1$, make it possible to recognize an ellipse: one always has the inequality
\begin{equation} \label{affineq}
3 a_1 \leq 2\pi^2 a_0,
\end{equation}
with equality if and only of $\g$ is an ellipse.
\end{theorem}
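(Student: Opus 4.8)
The plan is to recognize the claimed inequality \eqref{affineq} as nothing more than a repackaging of the affine isoperimetric inequality \eqref{affine_isop_ineq}. First I would collect the two ingredients already established: the remark immediately preceding the theorem identifies $a_0$ with the area $A$ enclosed by the billiard curve $\g$, and formula \eqref{a12} gives $a_1 = L^3/12$, where $L$ denotes the total affine length of $\g$.

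Next I would substitute these expressions into the target inequality $3a_1 \le 2\pi^2 a_0$. This turns it into
$$
\frac{L^3}{4} = 3\cdot\frac{L^3}{12} \le 2\pi^2 A,
$$
which, upon clearing the factor $4$, is exactly $L^3 \le 8\pi^2 A$, the affine isoperimetric inequality \eqref{affine_isop_ineq}. Hence \eqref{affineq} holds for every strictly convex closed curve. For the equality statement I would invoke the rigidity built into \eqref{affine_isop_ineq}: equality there holds precisely for ellipses (see \cite{Lut}), and since the two inequalities are equivalent via the same substitution, equality in \eqref{affineq} likewise characterizes ellipses.

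I do not expect any genuine obstacle here, because all the substantive work has been done upstream: the evaluation of $a_1$ through the theory of interpolating Hamiltonians (carried out in \cite{McV,Lud}) and the affine isoperimetric inequality together with its equality case (established in \cite{Lut}). The only thing left to verify is the bookkeeping of the numerical constants, which the single substitution above makes completely transparent.
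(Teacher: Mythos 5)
Your proposal is correct and is essentially the paper's own argument: the paper simply notes that by \eqref{a12} the inequality \eqref{affineq} is equivalent to the affine isoperimetric inequality \eqref{affine_isop_ineq}, which is exactly the substitution you carry out. You have merely made the constant-checking explicit, which matches the intended (one-line) proof.
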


There is nothing to prove since by (\ref{a12}) the affine isoperimetric inequality (\ref{affine_isop_ineq}) and (\ref{affineq}) are equivalent.

\begin{remark} 
{\rm Of course, we can rephrase Theorem \ref{hear} as ``one can hear the shape of an ellipse". This leads to an interesting open question. Can one interpret the sequence $a_0, a_1, a_2, \ldots$ really as a spectrum? That is, is there a differential operator whose spectrum is this sequence? For usual billiards this is well-known, see for instance \cite{MM}.

Similarly it would be very interesting to determine the higher terms $a_3, a_4, \ldots$, even in the case of ellipses, directly. In fact, for ellipses there is a little miracle that $a_0$ and $a_1$ determine all other terms. This is the affine isoperimetric inequality.
}
\end{remark}

\subsubsection{Insecurity of symplectic billiards}   \label{security}
 A classical billiard is called secure (or has the finite blocking property) if, for every two points $A$ and $B$ in the billiard table, there exists a finite set of points $S$, such that every billiard trajectory from $A$ to $B$ visits $S$ (so the set $S$ blocks $A$ from $B$). For example, billiard in a square is secure.

It is proved in \cite{Ta3} that planar billiards with smooth boundary are not secure. In a nutshell, the argument is as follows. 

Let $A$ and $B$ be on the convex part of the boundary curve $\g$, and consider the shortest $n$-link billiard trajectory from $A$ to $B$. For sufficiently large $n$, no points that are not on the boundary can block this trajectory. 

Using the theory of interpolating Hamiltonians, one shows that, modulo errors of order $1/n^2$, the reflection points are regularly distributed on the arc $AB$ with respect to the measure $\kappa ^{2/3} dx$, where $x$ is an arc length parameter, and $\kappa$ is the curvature of $\g$. 

One can introduce a coordinate $t$ so that $dt = \kappa ^{2/3} dx$ and normalize so that $t \in [0,1]$ on the arc $AB$. If the reflection points were regularly distributed, and there were $n$ reflections, then the reflection points would have coordinates $m/n,\ 1\leq m < n$. Then it is clear that for every finite set $S \subset [0,1]$, there exists $n$ such that $S$ contains no fractions with denominator $n$: one can take $n$ to be a prime number greater than all the denominators of the rational numbers contained in $S$.

The actual argument uses some number theory (Proposition 2 or the more general Theorem 3.2 of \cite{Ta3}) to deal with the errors of order $1/n^2$ in the distribution of the reflection points. To recap, no finite set of points on the arc $AB$ can block all billiard trajectories from $A$ to $B$.

One can use the same approach to prove an analogous result for symplectic billiards that we formulate below without proof. The key observation is contained in \cite{Lud}: with respect to the affine parameter $t$, the 
vertices of inscribed $n$-gons of maximal area, that is, the reflection points of the $n$-link symplectic billiard trajectory, are equidistributed  modulo errors of order $1/n^2$. 

\begin{theorem} \label{insec}
The symplectic billiard inside a smooth strictly convex curve is insecure. More precisely, for every pair of distinct points $x,y$ on the boundary curve $\g$ and every finite set $S\subset \g$, there exists a symplectic billiard trajectory from $x$ to $y$ that avoids $S$.
\end{theorem}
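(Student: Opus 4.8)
The plan is to transplant the argument of \cite{Ta3} for Birkhoff billiards into the symplectic setting, replacing length by area and the Euclidean density $\kappa^{2/3}dx$ by the affine length element. Fix distinct points $x,y\in\g$ and a finite set $S\subset\g$. For each $n$ I would consider the inscribed polygonal path $x=z_0,z_1,\dots,z_n=y$ with interior vertices on $\g$ that maximizes the action $\sum_{i=0}^{n-1}\omega(z_i,z_{i+1})$. Since $\omega(x,y)$ is the generating function of $\Phi$ (Section \ref{refllaw}), every critical configuration of this action—in particular the maximizer—has interior vertices obeying the symplectic reflection law, hence is a genuine $n$-link symplectic billiard trajectory from $x$ to $y$. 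Because $S$ lies on the boundary, the problem is thereby reduced to showing that, for infinitely many $n$, the interior vertices $z_1,\dots,z_{n-1}$ of this maximal-action trajectory miss the finite set $S$.

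The key analytic input is the equidistribution result of \cite{Lud}. Parameterizing the arc of $\g$ from $x$ to $y$ by the affine parameter $t$, normalized so that $t=0$ at $x$ and $t=1$ at $y$, the interior vertices of the maximal-area $n$-gon satisfy
\begin{equation*}
t_j = \frac{j}{n} + O\!\left(\frac{1}{n^2}\right), \qquad j=1,\dots,n-1,
\end{equation*}
uniformly in $j$. Thus, in the affine coordinate, the reflection points are a perturbation of size $O(1/n^2)$ of the regular partition $\{j/n\}$ of $[0,1]$.

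Next I would translate the blocking set into this coordinate: $S$ corresponds to finitely many values $s_1,\dots,s_m\in[0,1]$. In the idealized situation $t_j=j/n$ the avoidance is elementary: choosing $n$ to be a prime exceeding all denominators of the rational $s_i$ guarantees that no $j/n$ coincides with any rational $s_i$, while the irrational $s_i$ are avoided automatically. The substance of the proof is to preserve this avoidance once the $O(1/n^2)$ errors are reinstated.

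This last point is where the main obstacle lies. Heuristically the gaps $1/n$ between consecutive nominal reflection points dominate the error $O(1/n^2)$ for large $n$, so the perturbation cannot force a collision; but making this rigorous \emph{simultaneously} for all $m$ points of $S$ and for an infinite set of $n$ requires the number-theoretic mechanism of \cite{Ta3} (Proposition 2, or the more general Theorem 3.2). I would invoke that machinery after verifying that the symplectic billiard meets its hypotheses—namely that the deviation from equipartition is uniformly $O(1/n^2)$, which is precisely the content of \cite{Lud}. The only genuinely symplectic-specific checks are the identification of the action-maximizing path with a billiard trajectory and the use of the affine element $dt$ in place of $\kappa^{2/3}dx$; once these are in place, the arithmetic of \cite{Ta3} produces infinitely many $n$ for which the $n$-link trajectory from $x$ to $y$ avoids $S$, completing the proof.
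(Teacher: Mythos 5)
Your proposal follows exactly the route the paper itself outlines (and deliberately leaves without a detailed proof): maximize the symplectic area of the $n$-link inscribed path, use Ludwig's result \cite{Lud} that the reflection points are equidistributed in the affine parameter up to $O(1/n^2)$, and then invoke the number-theoretic blocking argument of \cite{Ta3} to find infinitely many good $n$. This matches the paper's intended argument in both structure and all key ingredients.
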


\section{Polygons} \label{polygons}

In this section we collect a few simple results on polygonal symplectic billiards. In our opinion, this subject deserves a thorough study.

We start with a remark concerning the definition: the symplectic billiard map is not defined for a chord $xy$ if the sides of the polygon that contain these points are parallel. The map is also not defined if point $x$ is a vertex of the polygon. Note that if the end points of a segment of a billiard trajectory are not on parallel sides of a polygon, then the same is true for the next segment of the trajectory. 

Let $P$ be a convex polygon.
The phase space $\P$ of the symplectic billiard is the torus $P\times P$, and it is naturally decomposed into rectangles, the products of pairs of sides of $P$ (if $P$ has pairs of parallel sides then the map is not defined on the respective rectangles). The symplectic billiard map $\Phi$ is a piecewise affine map of this phase space.

\subsection{Regular polygons} \label{regul}

The case of regular polygons is very interesting in both `classical' cases, the inner and the outer billiards. In the  case of inner billiards, the Veech Dichotomy holds: for every direction in a regular polygon, the billiard flow is either periodic or uniquely ergodic, see, e.g., \cite{HS,MT}, just like in the well known case of a square. In the case of outer billiards, (affine) regular polygons have an intricate fractal orbit structure (except for $n=3, 4, 6$ when all orbits are periodic) which is analyzed only for $n=5,8$, see \cite{Ta,Sc}.

Let $P$ be a regular $n$-gon whose sides are cyclically labeled $0,1,\ldots, n-1$. Assume that the initial segment of a billiard trajectory connects side $0$ with side $k$; here $1\leq k \leq [(n-1)/2]$. We shall call $k$ the rotation number of the trajectory. See Figures \ref{3and4} 
and \ref{5and6}. 

\begin{figure}[hbtp] 
\centering
\includegraphics[width=2.2in]{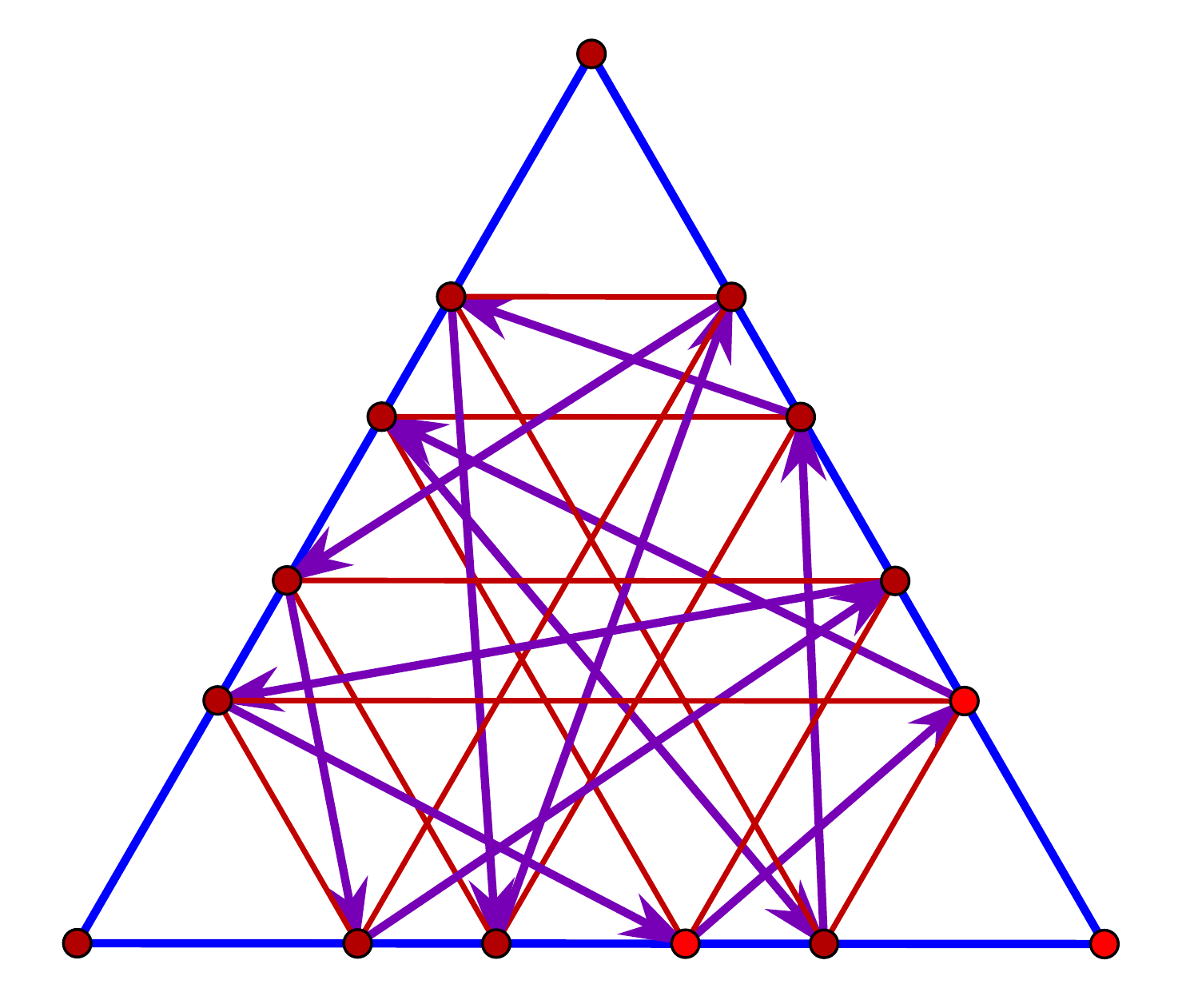}
\qquad
\includegraphics[width=2in]{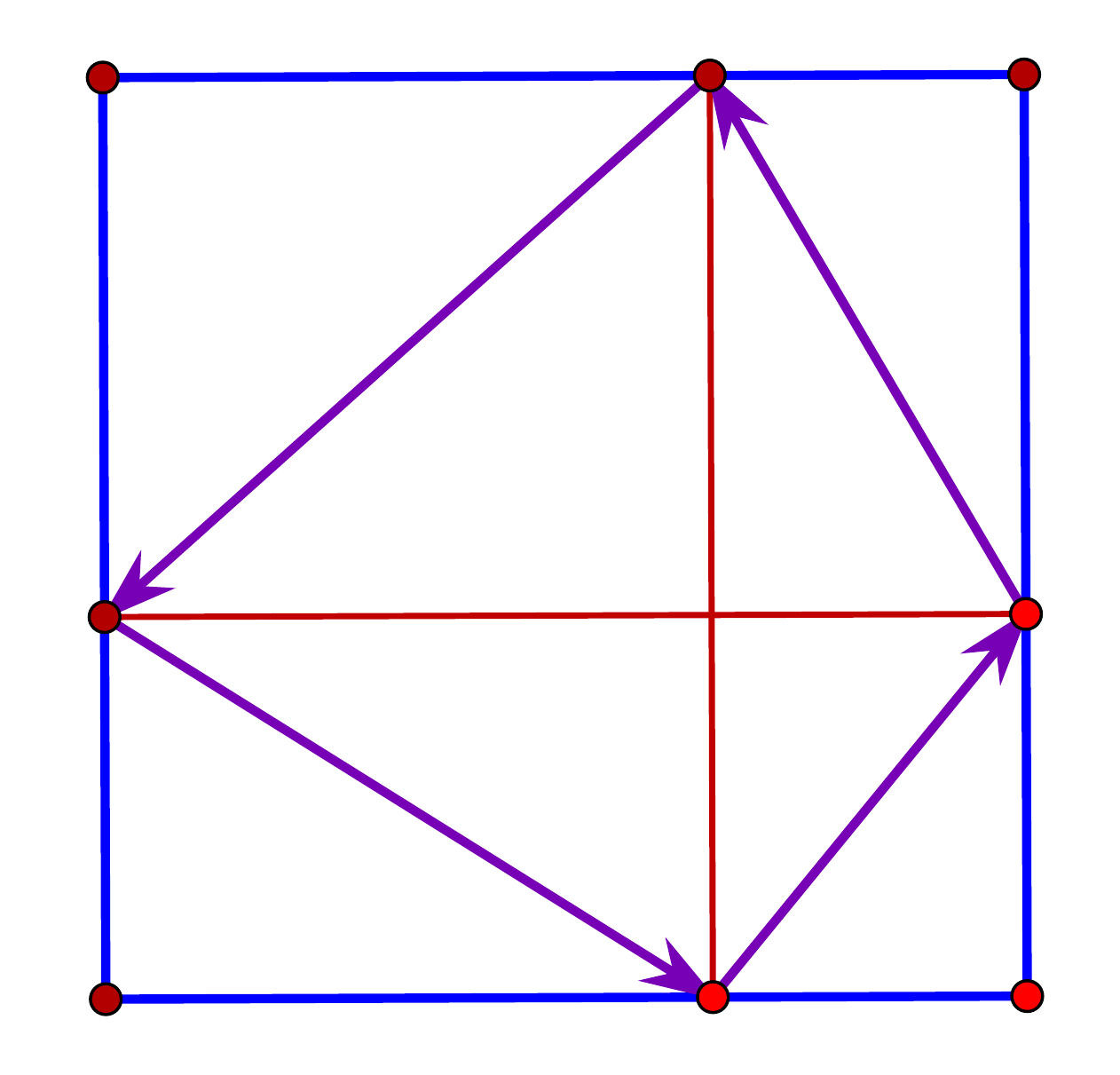}
\caption{A $12$-periodic orbit in a triangle and a $4$-periodic orbit in a square; in both cases, $k=1$.}
\label{3and4}
\end{figure}

\begin{figure}[hbtp] 
\centering
\includegraphics[width=2.1in]{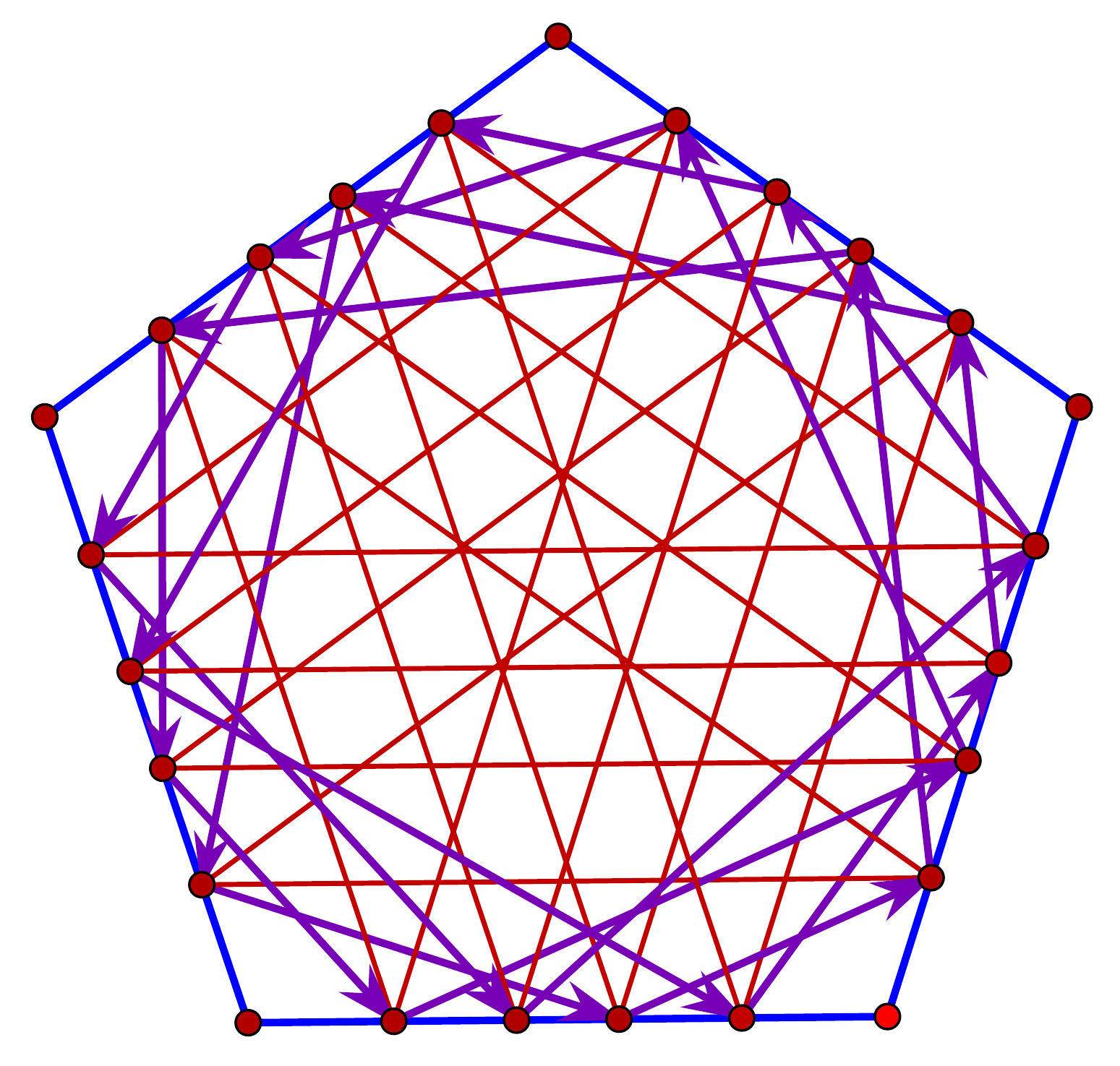}
\qquad
\includegraphics[width=2.2in]{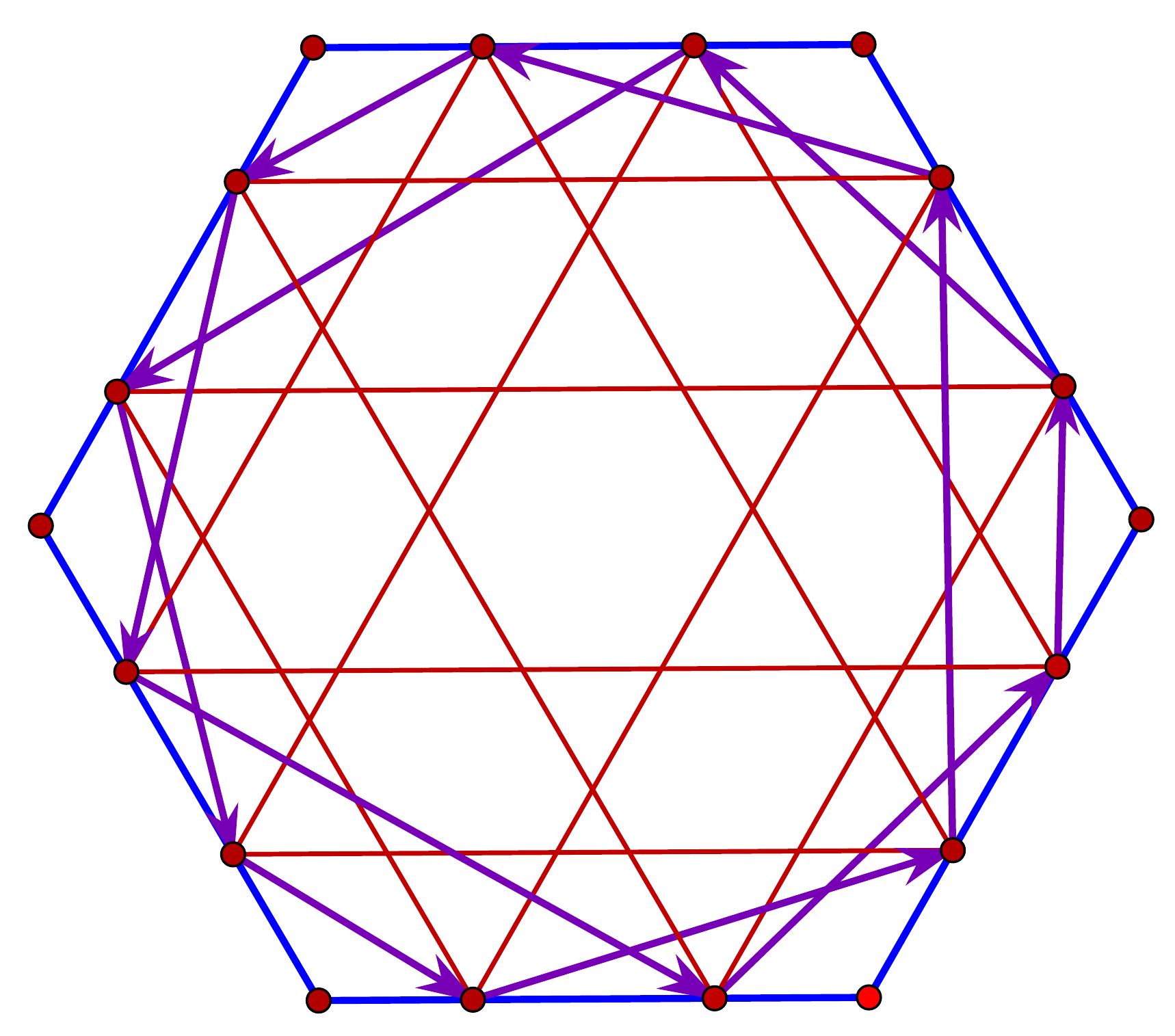}
\caption{A $20$-periodic orbit in a regular pentagon and a $12$-periodic orbit in a regular hexagon; in both cases, $k=1$.}
\label{5and6}
\end{figure}

\begin{theorem} \label{regpol}$ $
\begin{enumerate}  \itemsep=1.5ex
\item The rotation number of an orbit is well defined: each link of the orbit connects side $i$ with side $i+k$.
\item Let 
$$
g(n,k) = \frac{n}{{\rm gcd}(n,2k)}.\\[1ex]
$$
\begin{enumerate}  \itemsep=1ex
\item If $g(n,k)$ is even, then the respective billiard orbits are $2g(n,k)$-periodic.
\item If $g(n,k)$ is odd, the orbits are $4g(n,k)$-periodic.
\end{enumerate}
\end{enumerate}
\end{theorem}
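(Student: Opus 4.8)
The plan is to reduce the polygonal dynamics to a sequence of ordinary Euclidean reflections in the symmetry axes of $P$, and then to read off both the rotation number and the exact period from a single complex-analytic formula.

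\textbf{Step 1: the reflection law.} Place the regular $n$-gon $P$ with center at the origin $O$, label its sides $0,\dots,n-1$, and for each $s$ let $\tau_s$ denote the reflection of the plane in the symmetry axis of $P$ through $O$ and the midpoint of side $s$. My first claim is that if a link of an orbit runs from a point $x$ on side $i$ to a point $y$ on side $i+k$, then the reflected point is exactly $z=\tau_{i+k}(x)$, and it lies on side $i+2k$. Indeed, by (the polygonal analogue of) Lemma~\ref{definition_of_Phi}, $z$ is the second intersection with $\partial P$ of the line $\ell$ through $x$ parallel to side $i+k$. The axis of $\tau_{i+k}$ is orthogonal to the direction of side $i+k$, so $\ell$ is perpendicular to that axis and is mapped onto itself by $\tau_{i+k}$; hence $\tau_{i+k}(x)$ is a boundary point lying on $\ell$, and since $\ell\cap\partial P$ consists of only two points, $z=\tau_{i+k}(x)$. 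Because $\tau_{i+k}$ fixes side $i+k$ and sends side $(i+k)-k=i$ to side $(i+k)+k=i+2k$, the point $z$ lands on side $i+2k$. Consequently, if $x_0$ starts on side $i_0$, the side index of $x_m$ is $s_m=i_0+mk\pmod n$; in particular $s_{m+1}-s_m=k$ for every $m$, which is exactly assertion (1). (Note that $k\le[(n-1)/2]<n/2$, so consecutive sides $s_m,s_{m+1}$ are never parallel and the map is always defined.)

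\textbf{Step 2: complex form and the key relation.} Identify the plane with $\mathbb{C}$ and set $\zeta=e^{2\pi i/n}$. Reflection in the axis through $O$ at angle $\theta$ is $w\mapsto e^{2i\theta}\bar w$; since the axis of $\tau_s$ sits at angle $\pi(2s+1)/n$, we have $\tau_s(w)=\zeta^{2s+1}\bar w$. Step~1 then yields the second-order recurrence $x_{m+1}=\tau_{s_m}(x_{m-1})=\zeta^{\,c+2mk}\,\overline{x_{m-1}}$ with $c:=2i_0+1$. Applying it twice and using $\overline{\bar w}=w$ cancels the $m$-dependence and collapses everything to the clean relation
\begin{equation*}
x_{m+4}=\zeta^{4k}\,x_m ,
\end{equation*}
valid for all $m$: advancing four steps simply rotates the whole configuration by the angle $4k\cdot(2\pi/n)$.

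\textbf{Step 3: the minimal period.} Since the $\tau_s$ are linear, the orbit stays in a fixed combinatorial type and the period is uniform over the whole family of starting chords. From the recurrence one finds $x_{4l}=\zeta^{4kl}x_0$ while $x_{4l+2}=\zeta^{\,4kl+c+2k}\,\overline{x_0}$, and analogously the odd-indexed terms are governed by $x_1$. A return $x_N=x_0,\ x_{N+1}=x_1$ at $N\equiv2\pmod4$ would be a reflection condition $\zeta^{a}\overline{x_0}=x_0$ and $\zeta^{a'}\overline{x_1}=x_1$, forcing $x_0,x_1$ onto fixed rays through $O$ — impossible as $(x_0,x_1)$ ranges over the product of two (non-radial) sides. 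Hence $N\equiv0\pmod4$; writing $N=4L$, the relation of Step~2 makes closure equivalent to $\zeta^{4kL}=1$, i.e. $n\mid 4kL$. The least such $L$ is $n/\gcd(n,4k)$, so the minimal period is $N=4n/\gcd(n,4k)$.

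\textbf{Step 4: matching the stated formula.} It remains to verify the arithmetic identity that $4n/\gcd(n,4k)$ equals $2g(n,k)$ when $g(n,k)=n/\gcd(n,2k)$ is even and $4g(n,k)$ when it is odd. Comparing $2$-adic valuations of $n$, $2k$ and $4k$ shows that $\gcd(n,4k)=\gcd(n,2k)$ precisely when $v_2(n)\le v_2(2k)$, which is equivalent to $g$ being odd (and then $N=4g$), whereas $\gcd(n,4k)=2\gcd(n,2k)$ precisely when $v_2(n)\ge v_2(2k)+1$, equivalent to $g$ being even (and then $N=2g$). This is exactly the dichotomy in the statement, and one checks it against the worked cases $(n,k)=(3,1),(4,1),(5,1),(6,1)$ of Figures~\ref{3and4}--\ref{5and6}, giving periods $12,4,20,12$.

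I expect the main obstacle to be Step~3, namely ruling out an earlier closure of period $\equiv2\pmod4$: the geometric substance is that the ``half-way'' return map is a reflection rather than a rotation, so it can fix at most a one-parameter set of chords and never the full invariant family. Making this orientation/parity bookkeeping precise, together with the $2$-adic case analysis of Step~4 that produces the even/odd split of $g$, is where the real care is required; Steps~1 and~2 are elementary once the reflection description is in hand.
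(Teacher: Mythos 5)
Your proof is correct, but it follows a different route from the paper's. The paper decouples the orbit into its even and odd subsequences $x_0,x_2,x_4,\dots$ and $x_1,x_3,x_5,\dots$, observes that by the dihedral symmetry each is an ordinary Birkhoff billiard trajectory in $P$ travelling in a direction parallel to a side, and then quotes the period of such a parallel family of trajectories in a regular $n$-gon ($g(n,k)$ for the midpoint trajectory, and $g$ or $2g$ for a parallel one according to the parity of $g$); the total period $2g$ or $4g$ follows by combining the two subsequences, with the same genericity caveat you invoke to exclude an accidental coincidence between them. You instead encode a single step of the dynamics as the Euclidean reflection $\tau_s(w)=\zeta^{2s+1}\bar w$ in a symmetry axis, collapse two steps into the clean relation $x_{m+4}=\zeta^{4k}x_m$, and read off the minimal period as the closed form $4n/\gcd(n,4k)$, with the stated even/odd dichotomy recovered by the $2$-adic comparison of $\gcd(n,4k)$ with $\gcd(n,2k)$. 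Your version is more self-contained (the paper leaves the period of parallel billiard trajectories in a regular polygon as a known fact) and the single formula $4n/\gcd(n,4k)$ is a nice byproduct; the paper's version makes the link to Birkhoff billiards explicit, which it reuses later (e.g., in the trapezoid argument and in Propositions \ref{nbhd1}--\ref{nbhd2}). The only point where you are slightly terser than you should be is in Step 3: besides $N\equiv 2\pmod 4$ you should also dispose of odd $N$ (there the closure condition ties $x_0$ to a fixed affine image of $x_1$, again a non-generic coincidence, exactly parallel to the paper's remark that the even and odd trajectories are generically distinct); with that sentence added the argument is complete at the same level of rigor as the paper's.
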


\begin{proof}Let the orbit be $x_0,x_1,\ldots$ with $x_0$  on the side labeled $0$, and $x_1$ on the side labeled $k$. Due to the dihedral symmetry of the polygon, the projection along the $k$th side takes  the $0$th side to the $(2k)$th side. Hence $x_2$ lies on the $(2k)$th side, and so on. It follows that the rotation number of the orbit is well defined. 

The segments connecting $x_0$ to $x_2$ to $x_4$, etc.~are parallel to the sides of the polygon, and likewise for the segments connecting $x_1$ to $x_3$ to $x_5$, etc. One obtains two polygonal lines, even and odd. Due to the symmetry of a regular polygon $P$, both are usual billiard trajectories in $P$, see Figure \ref{pic_reg_gons}. We refer to them as the even and odd (usual) billiard trajectories. 

\begin{figure}[hbtp] 
\centering
\includegraphics[width=5.5in]{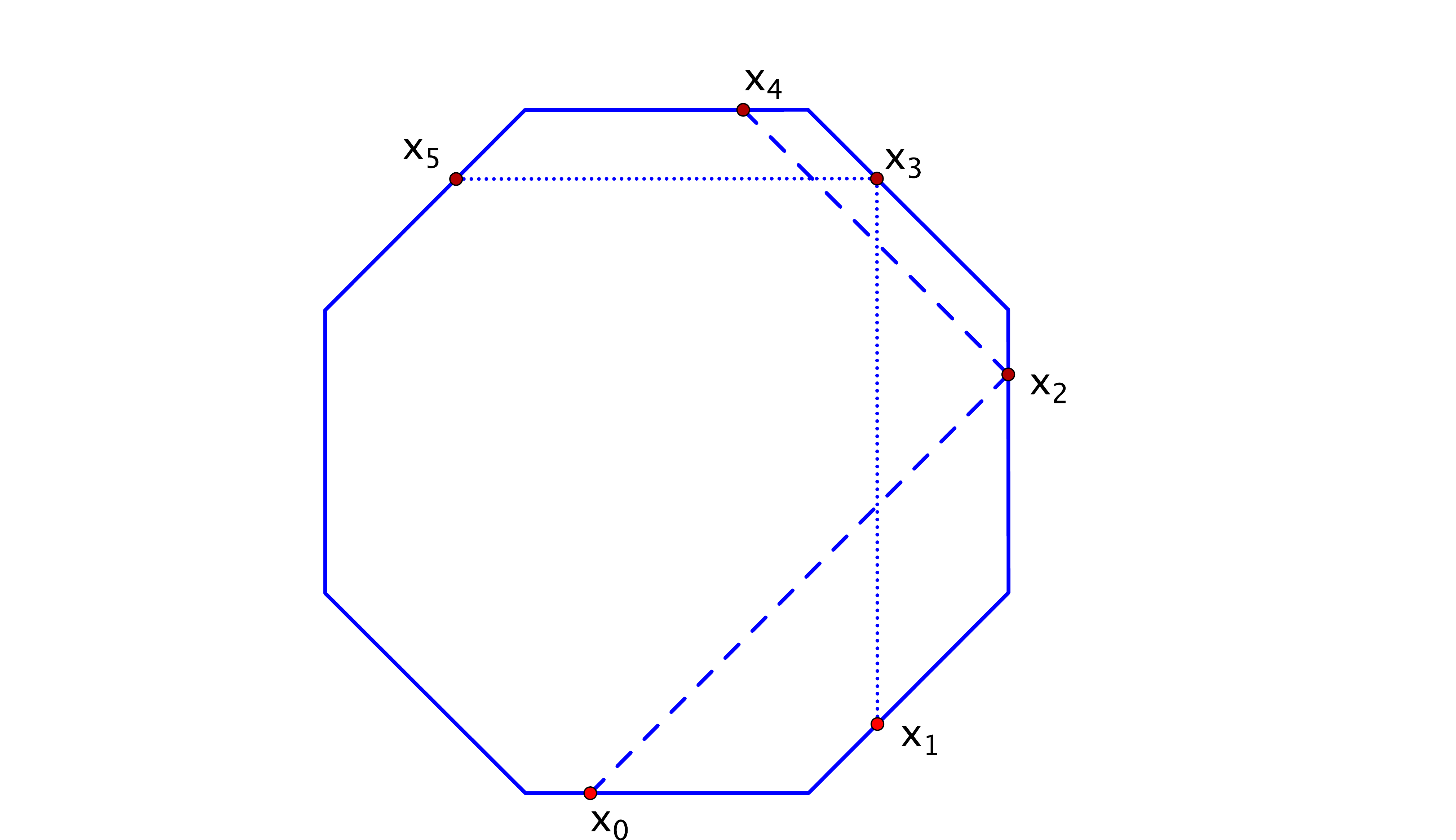}
\caption{Pieces of the even (dashed) and odd (dotted) billiard trajectories. The symmetry of the regular octagon enforces equal angles at $x_3$ and $x_4$.}
\label{pic_reg_gons}
\end{figure}

Consider a particular case of such a billiard trajectory, the one whose initial segment connects the midpoint of side $0$ to the midpoint of side $2k$. This billiard trajectory is periodic with period $g(n,k)$. A parallel trajectory is also periodic, with period $g(n,k)$ if $g(n,k)$ is even, and period $2g(n,k)$ if $g(n,k)$ is odd. 

Next we observe that the even billiard trajectory depends only on the choice of the point $x_0$ (and the fixed number $k$), and the odd one only on the choice of the point $x_1$. This implies that, generically, these two billiard trajectories are different. Therefore the total number of vertices of the orbit $x_0,x_1,\ldots$ is $2g(n,k)$ if $g(n,k)$ is even, and  $4g(n,k)$ if $g(n,k)$ is odd. 
\end{proof}
\begin{corollary}
{\rm Since all triangles are affine equivalent, all orbits in triangles are periodic, generically, of period 12.
}
\end{corollary}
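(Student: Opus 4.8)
The plan is to reduce the statement to the regular triangle already handled by Theorem \ref{regpol}, using affine equivariance, and then to read off the period from the arithmetic of $g(n,k)$. The whole thing is a corollary, so the work is essentially bookkeeping.

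First I would invoke the fact, recorded just after the generating-function discussion, that the symplectic billiard map commutes with affine transformations of the plane. Since any two triangles are related by an affine transformation, the symplectic billiard dynamics in an arbitrary triangle is affinely conjugate to the dynamics in the regular (equilateral) triangle. Conjugate maps have identical orbit structure and in particular identical periods, so it suffices to treat the regular triangle, i.e.\ the case $n=3$ of Theorem \ref{regpol}. Next I would specialize that theorem: for $n=3$ the admissible rotation numbers satisfy $1 \le k \le [(n-1)/2]=1$, so $k=1$ is the only possibility, and by part (1) every (non-degenerate) orbit has rotation number $1$. Computing $g(3,1)=3/\gcd(3,2)=3$, which is odd, part (2)(b) yields period $4\,g(3,1)=12$.

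Finally I would account for the qualifier ``generically''. The count $4g$ in the proof of Theorem \ref{regpol} rests on the even and odd auxiliary billiard trajectories being genuinely distinct; for the exceptional initial points where these two trajectories coincide the period drops to $2\,g(3,1)=6$, and orbits hitting a vertex must be excluded altogether since the map is undefined there. On the complement of these degenerate configurations the period is exactly $12$. The only step deserving a word of care is confirming that coincidence of the even and odd trajectories is non-generic: since the even trajectory is determined by $x_0$ alone and the odd one by $x_1$ alone (as noted in the proof of Theorem \ref{regpol}), their coincidence is manifestly a codimension-one condition, so this presents no real obstacle and the generic period $12$ follows.
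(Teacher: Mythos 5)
Your proposal is correct and follows exactly the route the paper intends: the corollary is a direct specialization of Theorem \ref{regpol} via affine equivariance, with $n=3$, $k=1$, $g(3,1)=3$ odd, giving period $4\cdot 3=12$. The paper gives no further proof, and your additional care about the non-generic coincidences of the even and odd auxiliary trajectories is a correct reading of where the qualifier ``generically'' comes from.
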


\subsection{Trapezoids} \label{trapsec}

Up to affine transformations, trapezoids form a 1-parameter family; it is convenient to normalize them so that they are isosceles. We assume that the lower horizontal side $AB$ is greater than the upper one, $CD$. Define the modulus of a trapezoid $ABCD$ to be
$$
 \left[ \frac{|AB|}{|AB|-|CD|} \right].
$$
The modulus is a positive integer. Let us call a trapezoid generic if $|AB|/(|AB|-|CD|)$ is not an integer. For example, one may consider a triangle (with $|CD|=0$) as a trapezoid of modulus one, but it is not generic.

\begin{theorem} \label{trapezoid}
All billiard orbits in a trapezoid are periodic. If the modulus of a generic trapezoid is $n$, then there are three periods: $16n-4, 16n+4$, and $16n+12$.
\end{theorem}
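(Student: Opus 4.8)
The plan is to reduce the symplectic billiard in a trapezoid to more elementary dynamics by exploiting the same even/odd splitting that worked for regular polygons. In a trapezoid $ABCD$ the only pair of parallel sides is $AB$ and $CD$, so by the opening remark of Section \ref{polygons} the map is undefined precisely on trajectory segments joining these two sides; all other configurations are admissible. First I would set up affine-normalized coordinates, taking $ABCD$ isosceles with $AB$ and $CD$ horizontal, and encode a trajectory by which sides its successive vertices land on. Because the two slanted sides $BC$ and $AD$ are not parallel to anything, the characteristic (tangent) direction at a vertex on one side is constant along that side, so the reflection law $z-x\in T_y\g$ becomes a piecewise-affine rule: projecting along a fixed direction. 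The key structural fact to extract is that, as in Theorem \ref{regpol}, the even-indexed and odd-indexed vertices each trace out a motion governed by translations/reflections along the sides, and the modulus $\left[\,|AB|/(|AB|-|CD|)\,\right]$ controls how many steps it takes to cycle through the side-labels.

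Next I would analyze the combinatorial itinerary of side-labels. The heart of the matter is a one-dimensional interval-exchange-like map: parameterize each side by an affine coordinate, and track how the billiard map permutes and translates these coordinates. The quantity $|AB|-|CD|$ measures the ``defect'' per bounce between the parallel sides, and $|AB|/(|AB|-|CD|)$ counts how many reflections are needed before the accumulated shift wraps around; the integer part $n$ is exactly the number of full passes. I expect the dynamics to decompose into a finite cyclic itinerary of side-visits whose length is a linear function of $n$, and the genericity hypothesis (that $|AB|/(|AB|-|CD|)$ is not an integer) guarantees that the shift is ``irrational-like'' only in the benign sense that no vertex lands exactly on a corner, so every orbit closes up after a fixed number of steps rather than being non-periodic. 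This is where the three distinct period values $16n-4$, $16n+4$, $16n+12$ must emerge: they should correspond to three families of orbits distinguished by the phase at which the itinerary begins, i.e., by which of three combinatorial ``types'' the starting segment belongs to (for instance, depending on whether the first few reflections hit the long base, the short base, or a slanted side, and with what parity).

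To pin down the three periods precisely I would, for a generic trapezoid of modulus $n$, explicitly follow a representative orbit of each type around its full cycle, counting the total number of vertices. The even/odd trajectory decomposition gives two interleaved usual-billiard paths in the trapezoid, and the period of the symplectic orbit is essentially twice the combined length of these, corrected by the parity subtleties already seen in Theorem \ref{regpol} (whether the even and odd sub-trajectories coincide or differ, doubling the count in the latter case). Computing the lengths of the even and odd billiard trajectories in an isosceles trapezoid as a function of $n$ should yield the arithmetic expressions $16n-4$, $16n+4$, $16n+12$ after bookkeeping; the constants $-4,+4,+12$ track boundary effects at the short base $CD$ and at the two corners, where the itinerary behaves differently over the first and last passes.

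The main obstacle will be the careful case analysis and exact counting near the short base and the corners: away from $CD$ the map is a clean translation along the slanted sides, but the segments that would otherwise connect the two parallel sides $AB$ and $CD$ are exactly the forbidden ones, so the orbit must instead reflect off a slanted side, and keeping an honest tally of how many extra or fewer reflections this forces — across all three orbit types, and with the correct parity doubling — is the delicate part. I would organize this by drawing the unfolding (reflecting copies of the trapezoid so that the odd and even billiard paths become straight lines, as in the standard Veech treatment of rational polygons), which should convert the period count into a lattice-point / winding computation and make the three residues $-4,+4,+12$ transparent; verifying that genericity rules out any further orbit types, and that no orbit ever becomes aperiodic, is the remaining point requiring care.
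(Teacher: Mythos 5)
Your overall strategy -- decouple the even- and odd-indexed vertices, reduce to a one-dimensional piecewise-affine dynamics on the boundary, and let the modulus count bounces between the parallel sides -- is the same skeleton as the paper's argument, but the proposal leaves the two genuinely hard points unproved and gets one structural fact wrong. First, the even and odd subsequences are \emph{not} usual billiard trajectories in the trapezoid: in Theorem \ref{regpol} that identification relied on the dihedral symmetry of the regular polygon, which a trapezoid lacks. Here each subsequence is an orbit of a different map $F$, which moves a boundary point along one of the two directions parallel to a side of the trapezoid, alternating the choice at each step. Consequently the Veech-style unfolding you propose does not apply, and the ``two interleaved usual-billiard paths'' picture is incorrect. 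Relatedly, the forbidden configurations (chords joining the parallel sides $AB$ and $CD$) concern the actual trajectory segments $x_i x_{i+1}$, not the segments of the even or odd paths; the latter do repeatedly join $AB$ to $CD$, and this ``bouncing'' is exactly what the modulus counts.

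Second, you never actually establish periodicity or the period values; both hinge on a specific computation the proposal only gestures at. The paper's key lemma is that the first-return map $T$ to a lateral side $BC$ is an orientation-reversing local isometry: the bounces between the horizontal sides preserve length, and the two slanted moves (from $AD$ to $AB$ and from $AD$ to $BC$) distort length by reciprocal factors. Hence $T$ is a reflection of an interval in a point, so $T^2=\mathrm{id}$, and every $F$-orbit is periodic with period $4n+2$ or $4n+6$ according to which side of a break point $E$ (with $T(E)=C$) the starting point lies on. This is what forces all symplectic billiard orbits to be periodic. Finally, the trichotomy of periods $16n-4$, $16n+4$, $16n+12$ does not come from ``which side the first few reflections hit''; it comes from pairing the two possible types (short, i.e.\ $(4n+2)$-gon, or long, i.e.\ $(4n+6)$-gon) of the two decoupled guides: short--short, short--long, and long--long, followed by a careful count of how many trajectory segments run between the two guides. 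Without the isometry lemma and this pairing, the constants $-4$, $+4$, $+12$ cannot be derived, so the proposal as written does not close.
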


\begin{proof}

Let $x_0, x_1, x_2, \ldots$ be a trajectory.  Similarly to the proof of Theorem \ref{regpol}, we consider the even and odd subsequences $x_0, x_2, x_4, \ldots$ and $x_1, x_3, x_5, \ldots$ separately.  

Let us define a transformation $F$ of the (boundary of the) trapezoid as follows.
Let $X$ be an interior point of a side of the trapezoid. Through $X$ there pass two lines parallel to one of the sides of the trapezoid and intersecting its interior. Choose one of them and move the point $X$ in this direction until it lands on the trapezoid  at point $Y$. Through $Y$ there again pass two lines parallel to one of the sides of the trapezoid. One of them takes $Y$ back to $X$; choose the other one and move the point $Y$ to the new point $Z$, etc. We have described the map $F: X \to Y \to Z \to \ldots$.

\begin{figure}[hbtp] 
\centering
\includegraphics[width=2.5in]{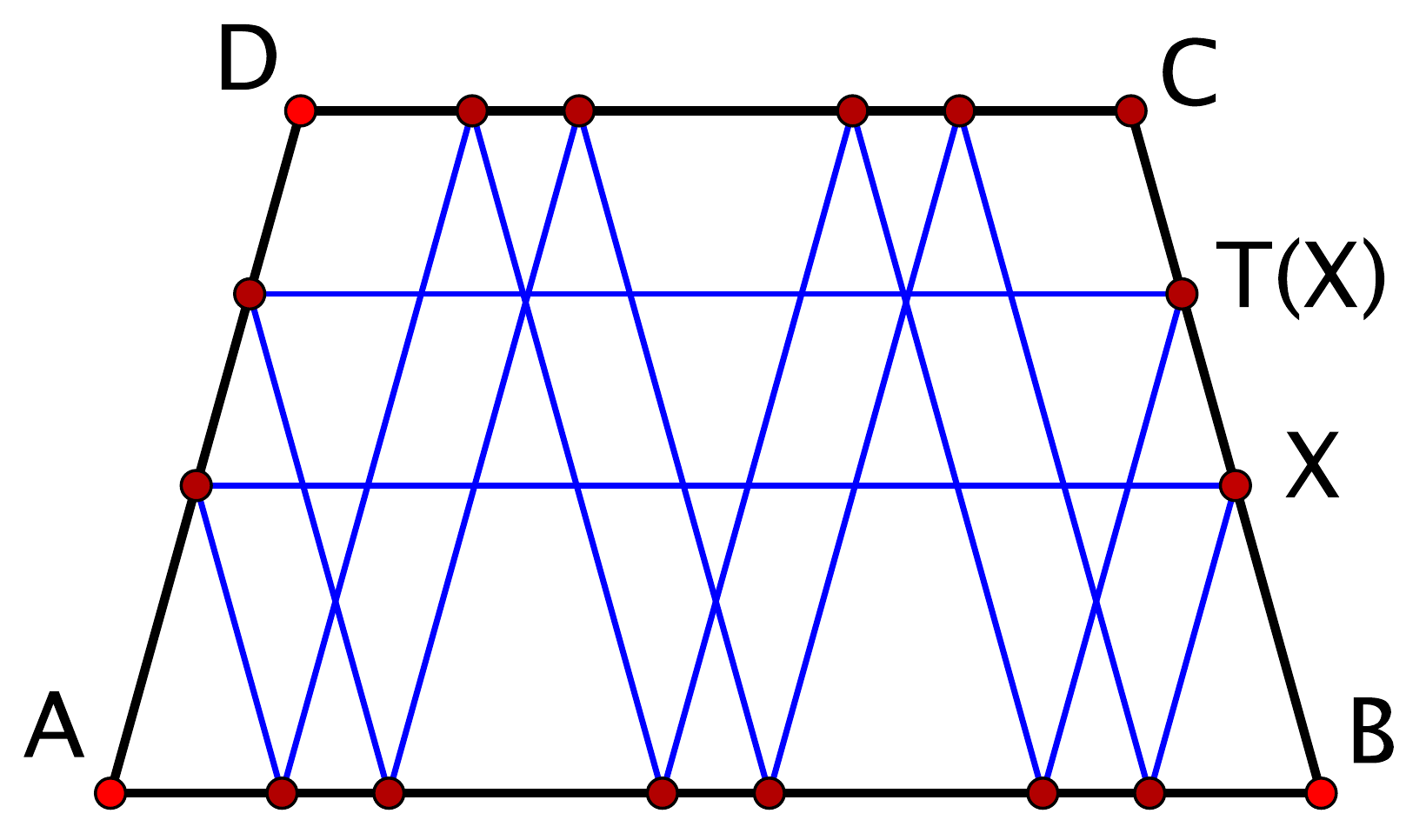}\quad
\includegraphics[width=2.5in]{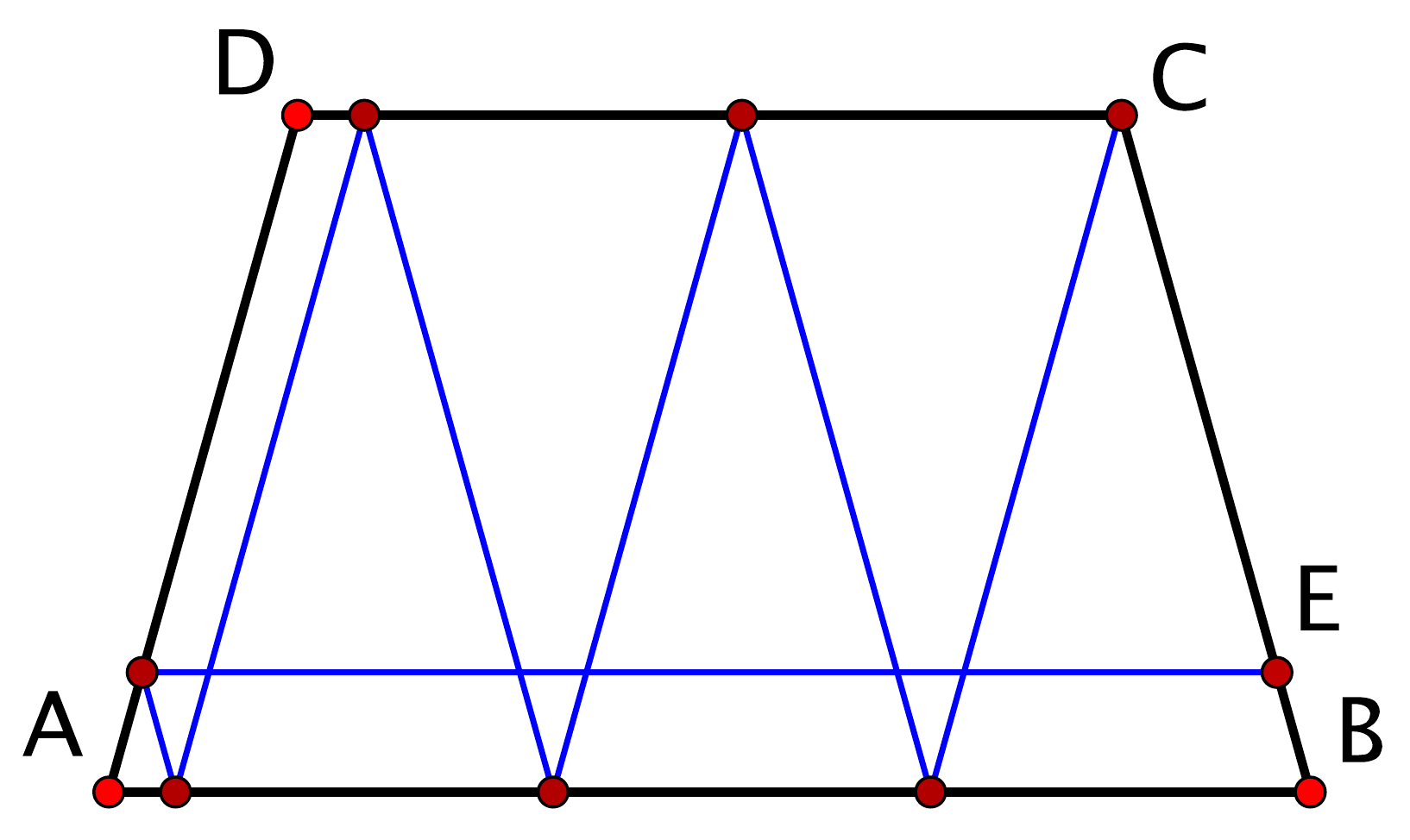}
\caption{Left, the return map $T$. Right, $E$ is a break point: $T(E)=C$.}
\label{trap}
\end{figure}

Since every trajectory necessarily visits all sides we take the starting point on the side $BC$ and move horizontally left. 
After two moves, the point lands on the  horizontal side $AB$ and makes an even number of `bouncing' moves between the horizontal sides, after which one more move in the direction $AD$ takes the point to side $BC$. Thus we have a return map $T$ to the side $BC$, see Figure \ref{trap}.

There is a break point $E$ on the side $BC$ such that $T(E)=C$, see Figure \ref{trap} on the right. For $X \in CE$, the    number of the bouncing moves between the horizontal sides equals $2(n-1)$, and for $X \in BE$, this number is $2n$. 

Note that the map $T$ consists of an odd number of moves, hence it reverses orientation.   In addition, $T$ is a local isometry: the bouncing between the horizontal sides does not distort the length, and the two moves, from a side  $AD$ to $AB$ and from $AD$ to $BC$, distort the length by reciprocal factors. It follows that $T$ is a reflection in a point, namely, the mid-point of the segment $BE$ or $EC$. 
Thus we find the period of point $X$ under the map $T$: it equals $4n+2$ if $X \in CE$, and $4n+6$ if $X \in BE$. 

In what follows we shall call the polygonal lines connecting the consecutive images of points under the map $F$ the guides. A guide that is an $4n+2$-gon is called short, and a guide that is an $4n+6$-gon is called long.

Now we can claim that every orbit of the symplectic billiard map $\Phi$ is periodic. Indeed, let 
$x_0 x_1$ be the initial segment of an orbit. As we have proved, the point $x_0$ determines a finite set of possible positions for even-numbered points $x_i$, and likewise, the point $x_1$ determines a finite set of possible odd-numbered points $x_j$. Therefore there are only finitely many possibilities for each segment $x_i x_{i+1}$, and 
hence the orbit is periodic.

Now we calculate the periods. Let $x_0,x_1,\ldots$ be an orbit. Consider the guides of points $x_0$ and $x_1$. Each one of them can be either short or long. Respectively, one needs to consider three cases: short-short, short-long, and long-long. The combinatorics of the orbits from each case are the same, but  differ from case to case. This is why one has three different periods.

Let us describe the short-short case in  detail. After that, we indicate how the other two cases differ. 

\begin{figure}[hbtp] 
\centering
\includegraphics[width=3.2in]{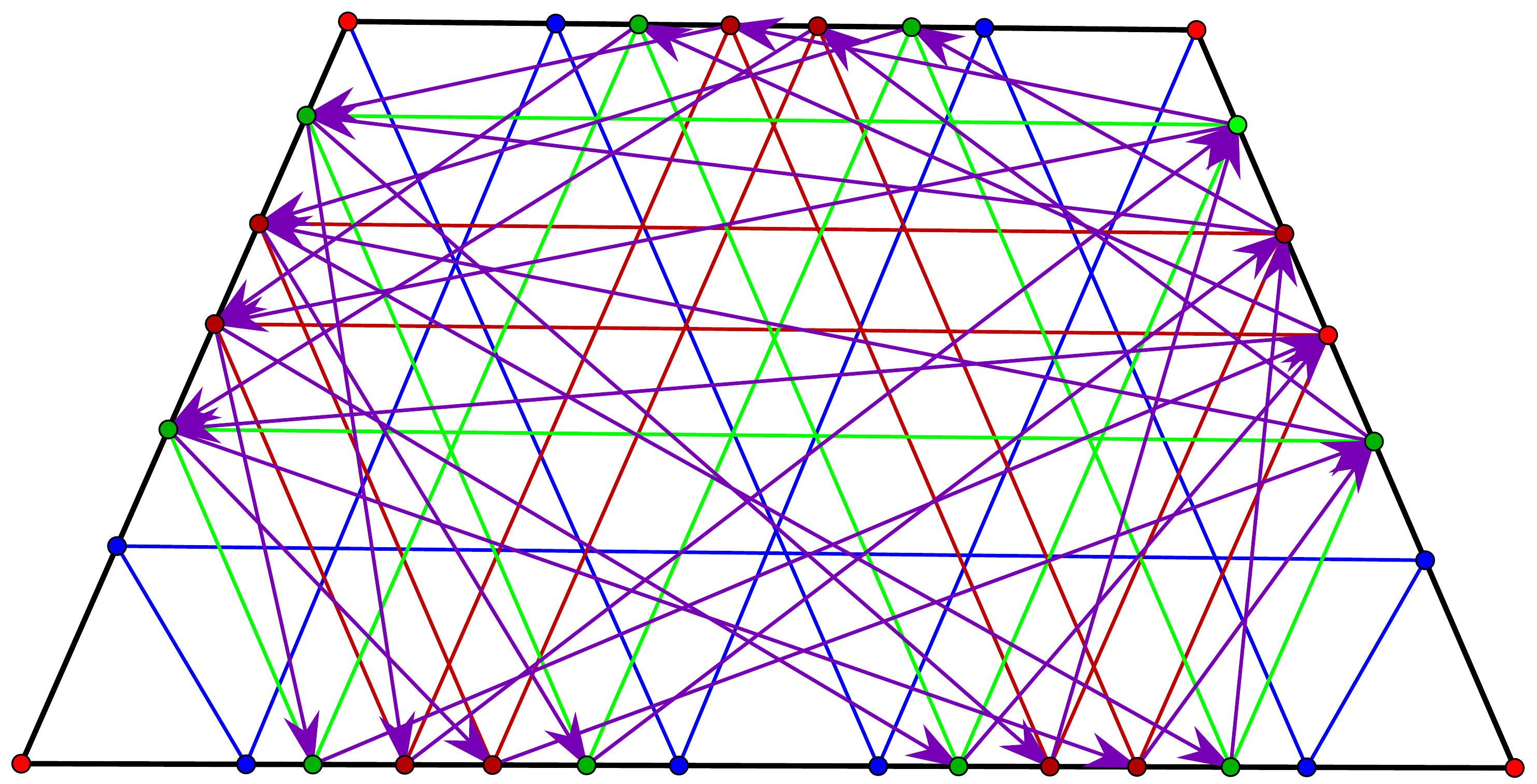}
\caption{A symplectic billiard orbit in a trapezoid of modulus 2, short-short case.}
\label{mess}
\end{figure}

Consider Figure \ref{mess} that depicts a trapezoid of modulus 2: one sees a 28-periodic orbit and the two short guides, each a decagon. Even for small $n=2$, this picture is already a mess. It is easier to analyze a particular case when the two guides coincide and contain the fixed point of the map $T$, see Figure \ref{trimm}. The effect of considering this special case is making the period four times as small (in other words, when we move the guides apart, the number of arrows quadruples).   

\begin{figure}[hbtp] 
\centering
\includegraphics[width=3.2in]{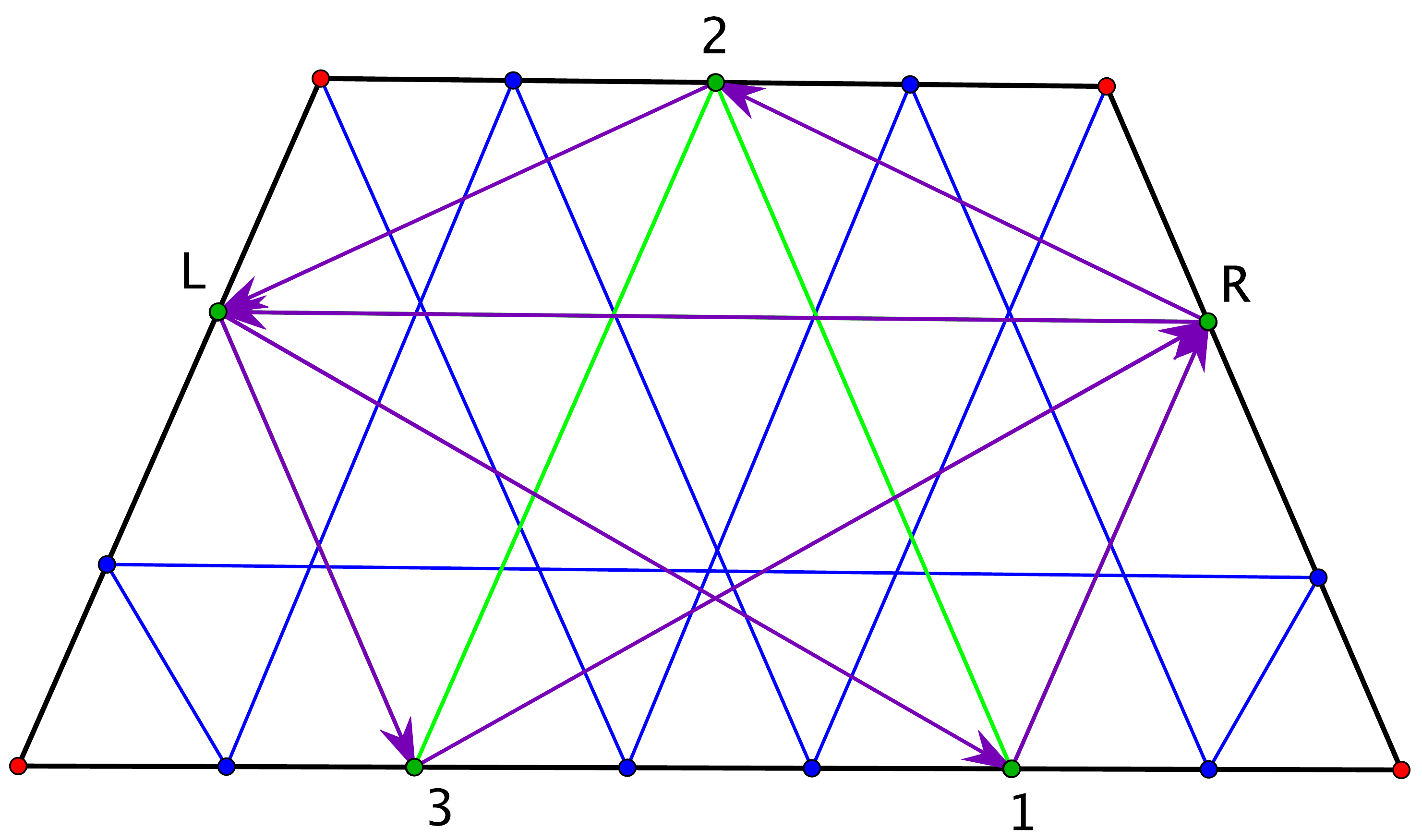}
\caption{A trimmed version: the fixed point of the map $T$, labeled $R$,  is both an even and odd vertex of the orbit.}
\label{trimm}
\end{figure}

Let us analyze the combinatorics. 
The guide has $2n+1$ vertices: two on the lateral sides, $n$ on the side $AB$, and $n-1$ on the side $CD$ (we use the notation in Figure \ref{trap}). Let us label these points along the guide as shown in Figure \ref{trimm} (the odd-numbered points are on side $AB$ and the even-numbered ones on side $CD$). Then the symplectic billiard orbit is 
$4n-1$-periodic and has the following code:
$$
(1\ R\ 2\ L\ 3\ R\ 4\ L\ \ldots\ (2n-1)\ R\ L).
$$
Quadrupling this number yields the short-short period $16n-4$. 

\begin{figure}[hbtp] 
\centering
\includegraphics[width=3.2in]{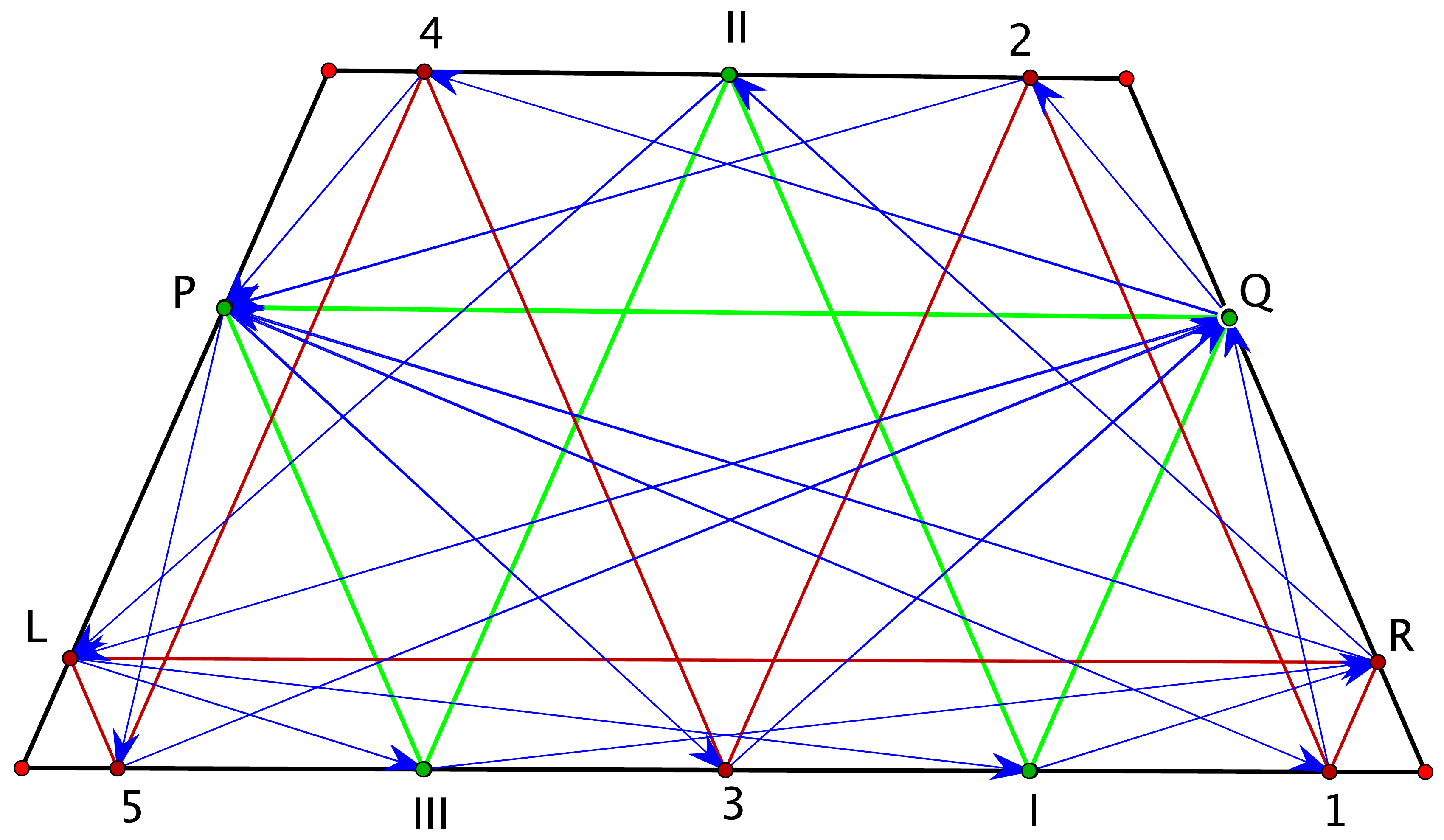}
\caption{A trapezoid of modulus 2, the short-long case.}
\label{shortlong}
\end{figure}

The long-long case is similar to the short-short one. 

The short-long case is illustrated in Figure \ref{shortlong}. Once again, we place the two guides in a special position and label their vertices as shown in the figure. This time the effect of the special position is in halving the number of arrows. The symbolic orbit in the case of $n=2$ is as follows:
$$
(L \ \text{I}\ R\ \text{I\!\,I}\ L\ \text{I\!\,I\!\,I}\ R\ P\ 1\ Q\ 2\ P\ 3\ Q\ 4\ P\ 5\ Q)
$$
and, in general, the period consists of $1+ 2(2n-1) + 2(2n+1) +1 = 8n+2$ symbols. Doubling this numbers gives the short-long period $16n+4$.
\end{proof}
\begin{remark}
{\rm It is an interesting problem to describe those polygons in which all symplectic billiard orbits are periodic. Another problem is to determine whether every convex polygon possesses a periodic symplectic billiard orbit.
}
\end{remark}

\subsection{Stable and unstable periodic trajectories} \label{stab}

It is known, and easy to prove, that an $n$-periodic point of a polygonal outer billiard transformation has a neighborhood consisting of periodic points of period $n$, if $n$ is even, and period $2n$, if $n$ is odd, see for instance \cite{DT}. A similar property holds for odd-periodic orbits of the polygonal symplectic billiard. For the statement of the following Propositions we need a little preparation.

Let $(x_0,\ldots,x_{n-1})$ be an $n$-periodic orbit. Let $\ell_i$ be the line through point $x_i$ parallel to $x_{i-1} x_{i+1}$. One cannot reconstruct the billiard polygon $P$ from the periodic orbit, but one knows that the side of $P$ through point $x_i$ lies on the line $\ell_i$.

Let $(\bar x_0 \bar x_1), \bar x_0 \in \ell_0, \bar x_1 \in \ell_1$, be the initial segment of a nearby billiard trajectory. As in the proof of Theorem \ref{regpol}, the evolutions of points $\bar x_0$ and $\bar x_1$ decouple: $\bar x_0$ is projected to line $\ell_2$ along $\ell_1$, then to $\ell_4$ along $\ell_3$, etc., and likewise for $\bar x_1$. Denote the projection of $\ell_{i-1}$ to $\ell_{i+1}$ along $\ell_i$ by $\pi_i$.

Let $\alpha_{i+\frac12}$ be the angle between $\ell_i$ and $\ell_{i+1}$. Let $dl_i$  be the length element on the line $\ell_i$. Then the distortion of length under the affine map $\pi_i$ is given by the formula:
$$
\frac{dl_{i+1}}{dl_{i-1}}=\frac{\sin \alpha_{i-\frac12}}{\sin \alpha_{i+\frac12}}.
$$
If $n$ is odd, then the first time that point $\bar x_0$ returns to line $\ell_0$ is after $n$ projections, that is, under the affine map 
$\pi_{n-1} \ldots \pi_3 \pi_1$. This map is orientation reversing and it is an isometry because
$$
\prod_{i=1}^n \frac{\sin \alpha_{i-\frac12}}{\sin \alpha_{i+\frac12}} =1.
$$
Hence the second iteration of this map is the identity. 

A similar argument applies to $\bar x_1$. Thus the orbits of both points, $\bar x_0$ and $\bar x_1$ consist of $2n$ points, and altogether, the billiard orbit is $4n$-periodic. Moreover, since the map is the identity, an entire neighborhood of this orbit is periodic with the same period. This proves the following.

\begin{proposition} \label{nbhd1}
An $n$-periodic point in phase space with $n$ being odd has an open neighborhood in phase spave which consists of $4n$-periodic orbits.
\end{proposition}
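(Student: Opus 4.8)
The plan is to push the decoupling of the even- and odd-indexed points, set up in the paragraphs above, one step further: I want to identify the return map along $\ell_0$ as a reflection in a point, so that its square is the identity and the period of a neighboring orbit doubles twice. Concretely, I would fix the $n$-periodic orbit $(x_0,\dots,x_{n-1})$ with $n$ odd, together with the lines $\ell_i$ through $x_i$ parallel to $x_{i-1}x_{i+1}$, and recall the projections $\pi_i\colon\ell_{i-1}\to\ell_{i+1}$ along $\ell_i$. Because $\gcd(2,n)=1$, the even-index sequence $0,2,4,\dots$ runs through all residues modulo $n$, so a point $\bar x_0\in\ell_0$ first returns to $\ell_0$ after exactly $n$ projections; write $R:=\pi_{2n-1}\circ\cdots\circ\pi_3\circ\pi_1$ (indices mod $n$) for this return map, an affine self-map of $\ell_0$. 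The odd subsequence is governed by the analogous composition.

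Next I would establish three properties of $R$. It is an \emph{isometry}: the length-distortion factors $\tfrac{\sin\alpha_{i-1/2}}{\sin\alpha_{i+1/2}}$ telescope to $1$ once the index $i$ runs over a full residue system modulo $n$. It \emph{fixes} $x_0$, because the given orbit is $n$-periodic, so $R(x_0)=x_{2n}=x_0$. And it \emph{reverses orientation}, being a composition of $n$ (an odd number) of single projections, each of which reverses the natural orientation of the side-lines. An orientation-reversing isometry of a line that has a fixed point is precisely the reflection in that point; hence $R$ is the reflection in $x_0$, so $R^2=\mathrm{id}$ while $R\neq\mathrm{id}$.

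The conclusion is then immediate. For a nearby $\bar x_0\neq x_0$ on $\ell_0$ the reflection $R$ moves $\bar x_0$, so $\bar x_{2n}=R(\bar x_0)\neq\bar x_0$ but $\bar x_{4n}=R^2(\bar x_0)=\bar x_0$; thus the even subsequence consists of $2n$ distinct points, and likewise the odd one, making the orbit $4n$-periodic. Finally, the cyclic sequence of sides (equivalently, of the lines $\ell_i$) met by a trajectory is locally constant, so the very same affine maps $\pi_i$, and hence the same point-reflection $R$, describe every sufficiently close trajectory; the identity $R^2=\mathrm{id}$ persists throughout, producing an \emph{open} neighborhood of $4n$-periodic orbits.

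I expect the orientation-reversal of $R$ to be the crux, since everything rests on $R$ being a genuine point reflection rather than the identity: that is exactly what distinguishes period $4n$ from $2n$ for the neighbors. I would verify it by orienting all side-lines consistently with $\partial P$ and computing the sign of the Jacobian of a single projection $\pi_i$ in the convex configuration of $\ell_{i-1},\ell_i,\ell_{i+1}$; the oddness of $n$ then propagates the sign to $R$. A secondary, easy point to keep honest is that small perturbations remain inside one cell of the piecewise-affine phase space, which is what guarantees that the neighborhood is open and that the combinatorics, and with them $R$, do not change.
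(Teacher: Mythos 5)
Your proposal is correct and follows essentially the same route as the paper: decouple the even- and odd-indexed vertices, identify the return map to $\ell_0$ as a composition of $n$ projections whose length-distortion factors telescope to $1$, conclude it is an orientation-reversing isometry of the line (hence an involution), and double the period twice. The only additions are cosmetic (pinning the reflection center at $x_0$ and spelling out why the combinatorics are locally constant), which the paper leaves implicit.
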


If $n$ is even, then the point $\bar x_0$ returns to the line $\ell_0$ after $\frac{n}{2}$ projections. The derivative of this affine map is
$$
\prod_{i\ {\rm odd\ mod}\ n} \frac{\sin \alpha_{i-\frac12}}{\sin \alpha_{i+\frac12}}.
$$
If this product is not equal to 1 then the fixed point $x_0$ is isolated. A similar argument applies to point $\bar x_1$, with the derivative given by the formula
$$
\prod_{i\ {\rm even\ mod}\ n} \frac{\sin \alpha_{i-\frac12}}{\sin \alpha_{i+\frac12}},
$$
i.e., if this product is not equal to 1 then $x_1$ is isolated.

In conclusion, if the derivatives of first return maps to $\ell_0$, resp.~to $\ell_1$, at the fixed point are both different from 1 then the fixed points are hyperbolic. Hyperbolic fixed points do not vanish under small perturbations of the map, and hence of the polygon. This proves the following.

\begin{proposition} \label{nbhd2}
If $n\ge 6$ is even then, for a generic polygon, an $n$-periodic point is isolated and is stable in the sense that it does not disappear under a small perturbation of the polygon.
\end{proposition}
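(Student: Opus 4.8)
The plan is to build directly on the first-return-map analysis carried out just above the statement and to distill from it a single nondegeneracy condition that holds for generic $P$. Because $n$ is even, the even- and odd-indexed vertices evolve independently, and this decoupling is in fact \emph{exact} within the combinatorial cell of the orbit (all $\bar x_i$ in the interior of the side containing $x_i$, no two consecutive sides parallel): the reflection law forces $\bar x_{2j+2}$ to be the image of $\bar x_{2j}$ under the parallel projection $\pi_{2j+1}$ of $\ell_{2j}$ onto $\ell_{2j+2}$ along the fixed direction of $\ell_{2j+1}$, so the even subsequence is a composition of fixed affine projections depending only on $\bar x_0$, and likewise for $\bar x_1$. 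Hence near the periodic point $\Phi^n$ is affine and splits as $(\bar x_0,\bar x_1)\mapsto(R_0(\bar x_0),R_1(\bar x_1))$, where $R_0,R_1$ are the one-dimensional return maps to $\ell_0,\ell_1$. Writing $\lambda_0,\lambda_1$ for their derivatives, the length-distortion formula gives
$$
\lambda_0=\prod_{i\ {\rm odd\ mod}\ n}\frac{\sin\alpha_{i-\frac12}}{\sin\alpha_{i+\frac12}},\qquad \lambda_1=\prod_{i\ {\rm even\ mod}\ n}\frac{\sin\alpha_{i-\frac12}}{\sin\alpha_{i+\frac12}},
$$
and the full product over all $i$ telescopes to $1$, so that $\lambda_0\lambda_1=1$ (consistent with area preservation).

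I would take $\lambda_0\neq1$ (equivalently $\lambda_1\neq1$) as the nondegeneracy condition, and the crux is to show it holds generically once $n\ge6$. For $n=4$ it must fail: there $\ell_0\parallel\ell_2$ and $\ell_1\parallel\ell_3$, which forces all four angles $\alpha_{i+\frac12}$ to equal the angle between the two diagonal directions, whence $\lambda_0=1$ identically --- precisely the family phenomenon recorded in Remark \ref{parsides}. For $n\ge6$ the even-indexed lines $\ell_0,\ell_2,\ldots,\ell_{n-2}$ are in general position and no such cancellation is imposed, so $\{\lambda_0=1\}$ is not identically satisfied; it then cuts out a nowhere-dense subset of the space of admissible polygons, and $\lambda_0\neq1$ for generic $P$.

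Granting $\lambda_0\neq1$, the conclusion follows quickly. Each affine map $R_0$, $R_1$ has derivative $\neq1$ and hence a unique fixed point, so $(x_0,x_1)$ is isolated in phase space. Moreover $D\Phi^n$ is diagonal with positive reciprocal eigenvalues $\lambda_0,\lambda_0^{-1}\neq1$, i.e.\ a hyperbolic saddle; since $1$ is not an eigenvalue, the equation $\Phi^n(p)=p$ persists under the implicit function theorem for all polygons close to $P$ (a small perturbation preserves the combinatorial type, so $\Phi^n$ stays affine and depends smoothly on the vertices of the polygon). This yields both isolation and stability. The main obstacle is the genericity step: one must exhibit, for $n\ge6$, a deformation of $P$ within the given combinatorial type that actually moves $\lambda_0$ off $1$, thereby certifying that $\{\lambda_0=1\}$ is non-generic --- the one place where the hypothesis $n\ge6$ is genuinely used, and exactly what fails for $n=4$.
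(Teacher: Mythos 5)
Your proposal is correct and follows essentially the same route as the paper: decouple the even and odd subsequences, compute the derivatives of the two one-dimensional affine return maps as the products of sine ratios, and invoke hyperbolicity (the products $\neq 1$ for a generic polygon) to get both isolation and persistence under perturbation. Your added observations --- that $\lambda_0\lambda_1=1$ by cyclic telescoping, and that $\lambda_0\equiv 1$ for $n=4$ because $\ell_0\parallel\ell_2$ and $\ell_1\parallel\ell_3$ force all four angles to coincide --- are correct and make the role of the genericity hypothesis (which the paper simply asserts) more explicit.
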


\section{Symplectic billiard in symplectic space} \label{higher_dimensions}

\subsection{Definition and continuous limit} \label{defhd}

\subsubsection{A precise look at the definition} \label{hdprecise}

We now extend the definition of the symplectic billiard map to linear symplectic space $\R^{2n}$. 

Consider a smooth closed hypersurface $M\subset \R^{2n}$ bounding a strictly convex domain. On $M\times M$ we consider the  function 
\begin{equation*}
S:M\times M \to \R,\ 
(x_1,x_2) \mapsto S(x_1,x_2):=\omega(x_1,x_2).
\end{equation*}
In analogy to Section \ref{refllaw}, we define the (open, positive part of) phase space as
\begin{equation*}
\P:=\{(x_1,x_2)\in M\times M\mid \omega(\nu_{x_1},\nu_{x_2})>0\}.
\end{equation*}
Here $\nu$ is again the outer normal. Since $M$ is strictly convex, the Gauss map $M\ni x\mapsto \nu_x\in S^{2n-1}$ is a bijection. Denote by $R(x)=\ker\omega|_{T_xM\times T_xM}$ the characteristic direction of $M$ at the point $x$. 

\begin{lemma}\label{omega_0_equiv_charc_direction_in_tangent_space}
The relation $R(x_2)\subset T_{x_1}M$ is equivalent to $\omega(\nu_{x_1},\nu_{x_2})=0$. In particular, it is symmetric in $x_1$ and $x_2$:
\begin{equation*}
R(x_2)\subset T_{x_1}M \quad \Longleftrightarrow \quad R(x_1)\subset T_{x_2}M.
\end{equation*}
\end{lemma}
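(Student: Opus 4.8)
The plan is to reduce everything to a single linear-algebra statement about the characteristic direction and the outer normal, and then invoke the fact that the Gauss map is a bijection so that $\nu_{x_1}, \nu_{x_2}$ determine $x_1, x_2$. The first step is to identify the characteristic direction $R(x)$ concretely. By definition $R(x) = \ker \omega|_{T_xM \times T_xM}$, so a vector $u \in T_xM$ lies in $R(x)$ exactly when $\omega(u, w) = 0$ for all $w \in T_xM$; that is, $\omega(u, \cdot)$ annihilates the whole hyperplane $T_xM$. Since $\omega$ is nondegenerate on $\R^{2n}$, the linear functional $\omega(u, \cdot)$ is identically zero on the hyperplane $T_xM$ if and only if it is proportional to the unique (up to scale) functional cutting out $T_xM$. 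That functional is $\langle \nu_x, \cdot \rangle$ (the Euclidean dual of the normal), so I want to record the identification $T_xM = \{w : \langle \nu_x, w\rangle = 0\}$.

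The key reformulation I would use is the standard symplectic-linear-algebra identity: writing $\omega(\cdot,\cdot) = \langle J \cdot, \cdot \rangle$ for the compatible complex structure $J$ (with $\langle \cdot,\cdot\rangle$ the Euclidean inner product), one has $\omega(u,w) = \langle Ju, w\rangle$. Then $u \in R(x_2)$ means $\langle J u, w\rangle = 0$ for all $w \in T_{x_2}M$, i.e.\ $Ju \perp T_{x_2}M$, i.e.\ $Ju$ is parallel to $\nu_{x_2}$. Equivalently, $u$ is parallel to $J^{-1}\nu_{x_2} = -J\nu_{x_2}$. Thus the characteristic direction is spanned by a single explicit vector:
\begin{equation*}
R(x_2) = \R \cdot J\nu_{x_2}.
\end{equation*}
This is the one computation worth doing carefully, and it is where the convexity/smoothness hypotheses enter only through the existence of a well-defined normal $\nu_{x_2}$ and tangent hyperplane.

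With that in hand the two sides of the claimed equivalence become transparent. The condition $R(x_2) \subset T_{x_1}M$ says $J\nu_{x_2} \in T_{x_1}M$, i.e.\ $\langle \nu_{x_1}, J\nu_{x_2}\rangle = 0$. But $\langle \nu_{x_1}, J\nu_{x_2}\rangle = -\langle J\nu_{x_1}, \nu_{x_2}\rangle = -\omega(\nu_{x_1},\nu_{x_2})$, using antisymmetry of $J$ with respect to $\langle\cdot,\cdot\rangle$. Hence $R(x_2)\subset T_{x_1}M \Longleftrightarrow \omega(\nu_{x_1},\nu_{x_2}) = 0$, which is exactly the asserted first equivalence. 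The symmetry conclusion is then immediate: $\omega(\nu_{x_1},\nu_{x_2}) = 0 \Longleftrightarrow \omega(\nu_{x_2},\nu_{x_1}) = 0$ by antisymmetry of $\omega$, and by the first equivalence (applied with the roles of $x_1,x_2$ reversed) this is the same as $R(x_1)\subset T_{x_2}M$.

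I do not expect a genuine obstacle here; this is essentially a dimension count plus the nondegeneracy of $\omega$. The one subtlety to state cleanly is that $R(x)$ is genuinely one-dimensional (so that ``the kernel'' is a line and the notation $R(x)$ is justified): on an odd-codimension-one hyperplane $T_xM$ of the symplectic space, the restriction of $\omega$ has a kernel of dimension exactly $\dim T_xM - \operatorname{rank} = (2n-1) - (2n-2) = 1$, since a skew form on an odd-dimensional space is degenerate with one-dimensional kernel when the ambient form is nondegenerate. I would note this once and then proceed, as it guarantees the explicit generator $J\nu_{x_2}$ found above indeed spans all of $R(x_2)$ rather than a proper subspace.
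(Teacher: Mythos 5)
Your proof is correct and follows essentially the same route as the paper: identify $R(x_2)=\R\cdot J\nu_{x_2}$ and translate the containment $R(x_2)\subset T_{x_1}M$ into the vanishing of $\langle J\nu_{x_2},\nu_{x_1}\rangle=\pm\,\omega(\nu_{x_2},\nu_{x_1})$, with symmetry following from antisymmetry of $\omega$. The only difference is that you spell out the dimension count and the derivation of the generator $J\nu_{x_2}$, which the paper states without proof.
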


\begin{proof}Since 
$R(x_2)=\R\cdot(J\nu_{x_2}),
$
where $J$ in the standard complex structure on $\R^{2n}\cong\C^n$, we observe
\begin{equation*}
R(x_2)\subset T_{x_1}M \quad \Longleftrightarrow \quad \langle J\nu_{x_2},\nu_{x_1} \rangle=0 \quad
 \Longleftrightarrow \quad \omega(\nu_{x_2},\nu_{x_1})=0,
\end{equation*}
as claimed.
\end{proof}
\begin{lemma}\label{omega_vanishing_connected_set}
The sets $\P=\{(x_1,x_2)\in M\times M\mid \omega(\nu_{x_1},\nu_{x_2})>0\}$ and $\{(x_1,x_2)\in M\times M\mid \omega(\nu_{x_1},\nu_{x_2})<0\}$ are connected.
\end{lemma}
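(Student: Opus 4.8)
The plan is to transport the problem to the unit sphere via the Gauss map and there exhibit explicit paths. Since $M$ is smooth and strictly convex, the Gauss map $G\colon M\to S^{2n-1}$, $x\mapsto\nu_x$, is a homeomorphism, hence so is $G\times G\colon M\times M\to S^{2n-1}\times S^{2n-1}$. Recalling from Lemma \ref{omega_0_equiv_charc_direction_in_tangent_space} that $\omega(\nu_{x_1},\nu_{x_2})=\langle J\nu_{x_1},\nu_{x_2}\rangle$, this homeomorphism carries $\P$ onto
$$
\mathcal Q:=\{(u,v)\in S^{2n-1}\times S^{2n-1}\mid \langle Ju,v\rangle>0\},
$$
so it suffices to prove that $\mathcal Q$ is path-connected.

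First I would record the fibered structure of $\mathcal Q$ over the first factor. For fixed $u\in S^{2n-1}$, the fiber $\{v\in S^{2n-1}\mid \langle Ju,v\rangle>0\}$ is an open hemisphere, hence path-connected; and crucially $Ju$ itself lies in this fiber because $\langle Ju,Ju\rangle=1>0$. This furnishes a canonical continuous section $u\mapsto(u,Ju)$ of $\mathcal Q$, through which I route all paths.

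The connectedness then follows in three explicit moves. Given $(u_0,v_0),(u_1,v_1)\in\mathcal Q$: within the fiber over $u_0$, connect $v_0$ to $Ju_0$ (both lie in that open hemisphere); within the fiber over $u_1$, connect $v_1$ to $Ju_1$; and finally, choosing any path $u_t$ from $u_0$ to $u_1$ in $S^{2n-1}$ (which is path-connected since $2n-1\ge 1$), the curve $t\mapsto(u_t,Ju_t)$ stays in $\mathcal Q$ and connects $(u_0,Ju_0)$ to $(u_1,Ju_1)$. Concatenating yields a path in $\mathcal Q$ from $(u_0,v_0)$ to $(u_1,v_1)$. Transporting back through $G\times G$ shows $\P$ is connected. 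For the negative set, note that it is the image of $\P$ under the coordinate swap $(x_1,x_2)\mapsto(x_2,x_1)$, a homeomorphism, using the antisymmetry $\omega(\nu_{x_2},\nu_{x_1})=-\omega(\nu_{x_1},\nu_{x_2})$; hence it too is connected.

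I expect no serious obstacle here; the only points needing care are that the Gauss map is a genuine homeomorphism (so that connectedness transfers intact) and that the hemisphere fibers together with the section $u\mapsto(u,Ju)$ are handled uniformly in $u$. The one case worth a glance is $n=1$, where $S^{2n-1}=S^1$ is still connected, so the argument goes through without change.
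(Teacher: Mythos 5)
Your proof is correct, and while it shares the paper's first move --- transporting $\P$ to the sphere via the Gauss map (which, as you note, is a homeomorphism: a continuous bijection from the compact $M$ onto $S^{2n-1}$) --- the core connectivity argument is genuinely different. The paper contracts the second coordinate toward the first along the normalized segment $a_t=\tfrac{a+t(b-a)}{\|a+t(b-a)\|}$, checking that $\omega(a,a_t)=\tfrac{t\,\omega(a,b)}{\|a+t(b-a)\|}$ keeps the sign, and then appeals to the connectedness of (the relevant part of) a tubular neighborhood of the diagonal; note that the diagonal itself satisfies $\omega(a,a)=0$ and so is not in the set, so this last step is left somewhat implicit. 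You instead fiber $\mathcal Q$ over the first factor, observe that each fiber is an open hemisphere containing $Ju$, and route every path through the global section $u\mapsto(u,Ju)$, which lies entirely inside $\mathcal Q$ since $\langle Ju,Ju\rangle=1$. This buys a fully explicit path between any two points with no appeal to a neighborhood of a set outside $\mathcal Q$, at the small cost of fixing a sign convention relating $\omega$ and $J$ (with the opposite convention one simply uses the section $u\mapsto(u,-Ju)$). Your treatment of the negative set via the swap $(x_1,x_2)\mapsto(x_2,x_1)$ is also slicker than the paper's ``the same argument works for $<$.'' Both arguments are valid; yours is the more self-contained of the two.
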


\begin{proof}Consider the Gauss map $M\ni x\mapsto \nu_x\in S^{2n-1}$. Then $\P$ is mapped to
\begin{equation*}
\P_0:=\{(a,b)\in S^{2n-1}\times S^{2n-1}\mid \omega(a,b)>0\}.
\end{equation*}
Clearly $b\neq\pm a$. Therefore, the simple interpolation
\begin{equation*}
a_t:=\frac{a+t(b-a)}{||a+t(b-a)||}\in S^{2n-1}, \; t\in(0,1]
\end{equation*}
and 
\begin{equation*}
\omega(a,a_t)=\frac{\omega(a,a+t(b-a))}{||a+t(b-a)||}=\frac{t\omega(a,b)}{||a+t(b-a)||}
\end{equation*}
shows that we can move the second factor close to $a$. That is, we can move any point in  $\P_0$ into a tubular neighborhood of the diagonal which is connected. The same argument works for $<$. Since the Gauss map is a bijection the results follows.
\end{proof}
We define the map $\Phi:\P\to\P$ geometrically similarly to the procedure from Section \ref{refllaw}, see Figure \ref{higher_d}.
\begin{figure}[hbtp] 
\centering
\includegraphics[width=3in]{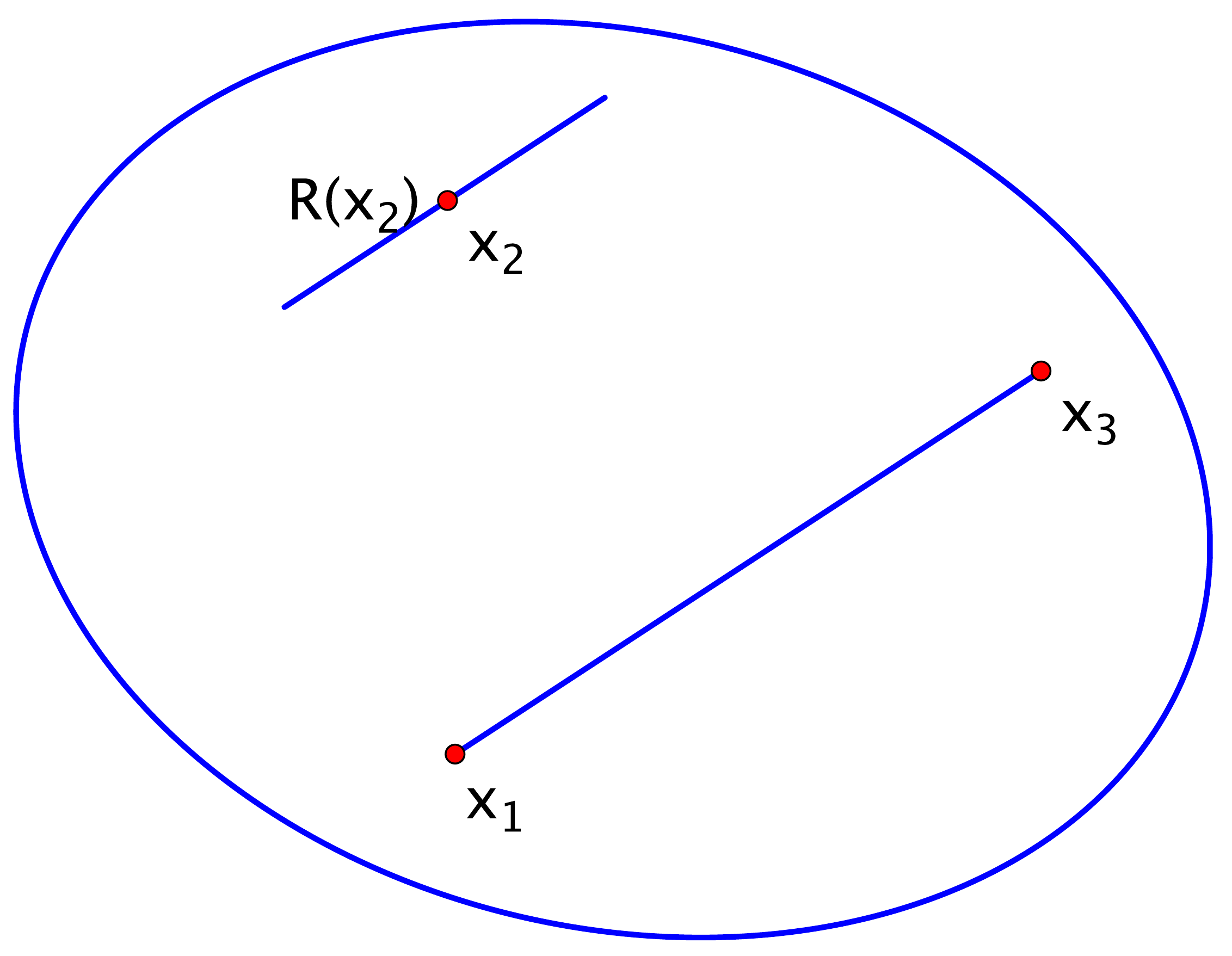}
\caption{Symplectic billiard map in higher dimensions.}
\label{higher_d}
\end{figure}
\begin{lemma}\label{lem:basic_def_sympl_case}
Given $(x_1,x_2)\in \P$, there exists a unique point $x_3\in M$ with
\begin{equation*}
\big(x_1+R(x_2)\big)\cap M =\{x_1,x_3\}. 
\end{equation*} 
Moreover, $(x_2,x_3)\in\P$ and
\begin{equation}\nonumber
x_3-x_1=tJ\nu_{x_2}
\end{equation}
with $t>0$.
\end{lemma}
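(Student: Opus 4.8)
The plan is to mimic the two-dimensional argument from Lemma \ref{definition_of_Phi}, replacing the tangent line by the characteristic line $R(x_2)$, and then to pin down the sign $t>0$ using the characterization of $\P$ via the symplectic pairing of outer normals.

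First I would establish existence and uniqueness of $x_3$. The line $x_1 + R(x_2)$ is an affine line in $\R^{2n}$ passing through the point $x_1 \in M$. Since $M$ bounds a strictly convex domain, any affine line meets $M$ in at most two points; if the line enters the interior of the domain it meets $M$ in exactly two points. The key point is therefore that this line is not tangent to $M$ at $x_1$, i.e. that $R(x_2) \not\subset T_{x_1}M$. But by Lemma \ref{omega_0_equiv_charc_direction_in_tangent_space} the inclusion $R(x_2) \subset T_{x_1}M$ is equivalent to $\omega(\nu_{x_1},\nu_{x_2})=0$, which is excluded since $(x_1,x_2)\in\P$ forces $\omega(\nu_{x_1},\nu_{x_2})>0$. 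Hence the direction $R(x_2)$ is transverse to $T_{x_1}M$, the line genuinely enters the convex domain, and there is a unique second intersection point $x_3 \neq x_1$. This gives $\big(x_1+R(x_2)\big)\cap M = \{x_1,x_3\}$.

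Next I would determine the sign. Since $R(x_2) = \R\cdot (J\nu_{x_2})$, we may write $x_3 - x_1 = t\, J\nu_{x_2}$ for some real $t\neq 0$, and the task is to show $t>0$. The natural way is to pair with $\nu_{x_1}$: apply $\langle\,\cdot\,,\nu_{x_1}\rangle$ and use $\langle J\nu_{x_2},\nu_{x_1}\rangle = -\omega(\nu_{x_2},\nu_{x_1}) = \omega(\nu_{x_1},\nu_{x_2}) > 0$. Since the outer normal at $x_1$ points out of the convex domain and the chord $x_3 - x_1$ points from $x_1$ into the domain (because $x_3$ is the \emph{other} intersection, reached by entering the interior), one expects $\langle x_3 - x_1, \nu_{x_1}\rangle \le 0$; combined with the sign of $\langle J\nu_{x_2},\nu_{x_1}\rangle$ this would fix the sign of $t$. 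I would make this precise using strict convexity, which guarantees that the supporting hyperplane at $x_1$ touches $M$ only at $x_1$, so the chord to any other boundary point has strictly negative pairing with $\nu_{x_1}$, yielding $t>0$ (after checking the orientation conventions match the planar case).

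Finally I would verify $(x_2,x_3)\in\P$, i.e. $\omega(\nu_{x_2},\nu_{x_3})>0$. This is the step I expect to be the main obstacle, since unlike the two-dimensional case there is no simple ordering of points to exploit. The cleanest approach is the continuity/connectedness argument used in Lemma \ref{definition_of_Phi} together with Lemma \ref{omega_vanishing_connected_set}: the quantity $\omega(\nu_{x_2},\nu_{x_3})$ cannot vanish on $\P$, for if $\omega(\nu_{x_2},\nu_{x_3})=0$ then by Lemma \ref{omega_0_equiv_charc_direction_in_tangent_space} we would have $R(x_2)\subset T_{x_3}M$, making the line $x_1+R(x_2)$ tangent to $M$ at $x_3$ and contradicting strict convexity (a chord cannot be tangent at its far endpoint). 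Hence $\omega(\nu_{x_2},\nu_{x_3})$ is a nowhere-vanishing continuous function on the connected set $\P$, so it has constant sign; evaluating on any convenient configuration (e.g. $x_2$ near $x_1$, where $x_3$ is near $x_1$ as well and the pairing is manifestly positive) shows the sign is positive throughout. This establishes $(x_2,x_3)\in\P$ and completes the proof.
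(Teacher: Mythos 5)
Your proposal is correct in substance and follows the paper's strategy for two of the three assertions: existence and uniqueness of $x_3$ via transversality of $R(x_2)$ to $T_{x_1}M$ (Lemma \ref{omega_0_equiv_charc_direction_in_tangent_space}), and $(x_2,x_3)\in\P$ via the observation that $\omega(\nu_{x_2},\nu_{x_3})=0$ would force $R(x_2)\subset T_{x_3}M$ and hence contradict $x_1\neq x_3$, combined with connectedness and evaluation near the diagonal. Your packaging of the latter as ``a nowhere-vanishing continuous function on the connected set $\P$ has constant sign'' is a clean reformulation of the paper's ``move $x_2$ until $\omega(\nu_{x_2},\nu_{x_3})=0$'' argument; both versions leave the actual evaluation near the diagonal at the same informal level as the planar Lemma \ref{definition_of_Phi}, so no complaint there.

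Where you genuinely diverge is the sign of $t$. The paper argues again by connectedness: $t(x_2)$ cannot change sign as $x_2$ ranges over the connected set $M_{x_1}^+$ without vanishing, which would force $x_3=x_1$; the sign is then computed at the special point where $\nu_{x_2}=aJ\nu_{x_1}$, giving $x_3-x_1=-at\nu_{x_1}$ and hence $t>0$ by convexity. Your argument --- pair $x_3-x_1=tJ\nu_{x_2}$ with $\nu_{x_1}$ and use that the chord from $x_1$ to any other boundary point of a strictly convex hypersurface has strictly negative inner product with the outward normal --- is more direct and works at every point of $\P$ simultaneously, with no connectedness needed. One correction, though: under the paper's conventions ($\omega(u,v)=\langle Ju,v\rangle$, as used in the proof of Lemma \ref{omega_0_equiv_charc_direction_in_tangent_space} and in the computation $\omega(\nu_{x_1},aJ\nu_{x_1})=a|\nu_{x_1}|^2$) one has $\langle J\nu_{x_2},\nu_{x_1}\rangle=\omega(\nu_{x_2},\nu_{x_1})=-\omega(\nu_{x_1},\nu_{x_2})<0$, not $>0$ as you wrote. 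With the correct sign, $t\,\langle J\nu_{x_2},\nu_{x_1}\rangle=\langle x_3-x_1,\nu_{x_1}\rangle<0$ does give $t>0$; with your sign it would give $t<0$. Since you explicitly flagged the need to check orientation conventions, this is a slip rather than a gap, but it is the one place where the computation must be pinned down for the argument to close.
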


\begin{remark}
{\rm Using the language of contact geometry one, of course, realizes that $J\nu$ is positively proportional to the Reeb vector field $\vec{R}$ of the standard contact form $\alpha:=-dr\circ J$ on $M$.  Here $r$ denotes the radial coordinate on $\R^{2n}$. In particular, $\vec{R}$ orients (or trivializes) the line field $R$ and the definition of the positive part of the phase space corresponds to conforming to this orientation.
} 
\end{remark}

\begin{proof}Since $M$ is convex, $\big(x_1+R(x_2)\big)\cap M =\{x_1,x_3\}$ with possibly $x_1=x_3$. Since $(x_1,x_2)\in\P$, we know that $R(x_2)\not\subset T_{x_1}M$, and therefore, $x_1\neq x_3$. 

To show that $(x_2,x_3)\in\P$, we argue, as in the 2-dimensional case, by continuity. The statement is clearly true if $x_2$ is close to $x_1$. If $\omega(\nu_{x_2},\nu_{x_3})\leq0$ then, using the connectedness of $\{(x_1,x_2)\in M\times M\mid \omega(\nu_{x_1},\nu_{x_2})<0\}$, we can move the point $x_2$ in order to achieve $\omega(\nu_{x_2},\nu_{x_3})=0$. The latter is equivalent to $R(x_2)\subset T_{x_3}M$ which contradicts $x_1\neq x_3$.

To determine the sign in $x_3-x_1=tJ\nu_{x_2}$ consider the set $M_x^+:=\big\{y\in M\mid \omega(\nu_x,\nu_y)>0\big\}$ and similarly $M_x^-$. Both are connected since they correspond to hemispheres under the Gauss map. We can rephrase the definition of $\P$ as 
\begin{equation}\nonumber
(x_1,x_2)\in\P \quad \Longleftrightarrow \quad x_1\in M_{x_2}^- \quad \Longleftrightarrow \quad x_2\in M_{x_1}^+,
\end{equation}
and the above discussion as
\begin{equation}
x_3\in M_{x_2}^+\;.
\end{equation}\nonumber
Since $x_1,x_2$ determine $x_3$ we write
\begin{equation}\nonumber
x_3-x_1=t(x_2)J\nu_{x_2},
\end{equation}
where we think of $x_1$ as fixed and $x_2$ as a variable. 

First we claim that the sign of $t(x_2)$ does not depend on $x_2$ as long as $(x_1,x_2)\in\P$, i.e., $x_2\in M_{x_1}^+$. Assume that $t(x_2)$ changes sign or vanishes. Since $M_{x_1}^+$ is connected, we can always reduce to the latter case, i.e., we find a point $x_2$ with $t(x_2)=0$. But this means $x_3=x_1$, which directly contradicts the very first assertion of this proof. 

Since the sign of $t(x_2)$ is fixed, we now compute it at a convenient point. For that, let $x_2$ be the point where $\nu_{x_2}=aJ\nu_{x_1}$ with $a>0$. This point exists and is unique again by the Gauss map being a diffeomorphism. It follows that $(x_1,x_2)\in\P$ since
\begin{equation}\nonumber
\omega(\nu_{x_1},\nu_{x_2})=\omega(\nu_{x_1},aJ\nu_{x_1})= a\cdot|\nu_{x_1}|^2=a>0.
\end{equation}
Finally we observe that
\begin{equation}\nonumber
x_3-x_1=t(x_2)J\nu_{x_2}=at(x_2)JJ\nu_{x_1}=-at(x_2)\nu_{x_1}
\end{equation}
which, again by convexity, forces $t(x_2)>0$, as $\nu$ is the outward normal.
\end{proof}
By Lemma \ref{lem:basic_def_sympl_case}, we can again define a map
\begin{equation*}
\Phi:\P\to\P,\
(x_1,x_2)\mapsto \Phi(x_1,x_2):=(x_2,x_3)
\end{equation*}
by the above rule. Now it follows that $S(x_1,x_2)=\omega(x_1,x_2)$ is a generating function, indeed:
\begin{equation}\label{def_phi}
(x_2,x_3)=\Phi(x_1,x_2)\quad \Longleftrightarrow \quad \frac{\partial}{\partial x_2}\big(S(x_1,x_2)+S(x_2,x_3)\big)=0,
\end{equation}
by the latter we, of course, mean that 
\begin{equation*}
\begin{aligned}
\omega(x_1,v)+\omega(v,x_3)=0\quad\forall v\in T_{x_2}M.
\end{aligned}
\end{equation*}
Since $x_1\neq x_3$, this is equivalent to the condition 
$
x_3-x_1\in R(x_2),
$
as needed.

We can again extend the map $\Phi$ continuously to 
$$\bar\P:=\{(x_1,x_2)\in M\times M\mid \omega(\nu_{x_1},\nu_{x_2})\geq0\}$$
 by
\begin{equation}
\begin{aligned}
&\Phi(x,x):=(x,x)\\
&\omega(\nu_{x_1},\nu_{x_2})=0\;\;\Rightarrow\;\; \Phi(x_1,x_2):=(x_2,x_1).
\end{aligned}
\end{equation}
Equation \eqref{def_phi} implies that the map $\Phi$ preserves the closed 2-form 
\begin{equation*}
\Omega_{(x_1,x_2)}:= \frac{\partial^2 S}{\partial x_1\partial x_2}(x_1,x_2)\ dx_1\wedge dx_2\in \Omega^2(M\times M).
\end{equation*}
That is, for $(u_i,v_i)\in T_{(x,y)}M\times M$,
\begin{equation*}
\Omega_{(x,y)}\big((u_1,v_1),(u_2,v_2)\big)=\omega(u_1,v_2)-\omega(u_2,v_1).
\end{equation*}

\begin{lemma}
For $x,y\in M$, we have
\begin{equation*}
\Phi(x,y)=(y,x)\quad \Longleftrightarrow \quad \omega(\nu_x,\nu_y)=0 \quad \Longleftrightarrow \quad \Omega_{(x,y)} \text{ is degenerate}.
\end{equation*} 
\end{lemma}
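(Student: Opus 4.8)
The plan is to prove the chain of three equivalences by establishing each link separately, exploiting the machinery already developed in the excerpt. The statement asserts, for $x,y \in M$, that
\begin{equation*}
\Phi(x,y)=(y,x) \iff \omega(\nu_x,\nu_y)=0 \iff \Omega_{(x,y)} \text{ is degenerate}.
\end{equation*}

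First I would handle the equivalence $\Phi(x,y)=(y,x) \iff \omega(\nu_x,\nu_y)=0$. The direction ($\Leftarrow$) is immediate from the continuous extension of $\Phi$ to $\bar\P$: by construction, whenever $\omega(\nu_{x_1},\nu_{x_2})=0$ we have declared $\Phi(x_1,x_2):=(x_2,x_1)$, so this is essentially definitional. For the converse ($\Rightarrow$), I would argue by contradiction: suppose $\omega(\nu_x,\nu_y) \neq 0$, so that either $(x,y) \in \P$ or $(x,y)$ lies in the negative part of phase space; by symmetry assume $(x,y) \in \P$. Then Lemma \ref{lem:basic_def_sympl_case} produces a unique point $x_3$ with $\big(x+R(y)\big)\cap M = \{x,x_3\}$ and $x_3 \neq x$, so $\Phi(x,y)=(y,x_3)$. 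If $\Phi(x,y)=(y,x)$ then $x_3=x$, contradicting $x_3 \neq x$. Thus $\omega(\nu_x,\nu_y)=0$.

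Next I would treat $\omega(\nu_x,\nu_y)=0 \iff \Omega_{(x,y)}$ is degenerate. Here the explicit formula for $\Omega$ stated just before the lemma is the key tool:
\begin{equation*}
\Omega_{(x,y)}\big((u_1,v_1),(u_2,v_2)\big)=\omega(u_1,v_2)-\omega(u_2,v_1),
\end{equation*}
where $u_i \in T_x M$ and $v_i \in T_y M$. The form $\Omega_{(x,y)}$ is degenerate precisely when there exists a nonzero $(u_1,v_1) \in T_x M \times T_y M$ annihilated by it, i.e. $\omega(u_1,v_2)=\omega(u_2,v_1)$ for all $(u_2,v_2)$. Taking $u_2=0$ forces $\omega(u_1,v_2)=0$ for all $v_2 \in T_y M$, and taking $v_2=0$ forces $\omega(u_2,v_1)=0$ for all $u_2 \in T_x M$. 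The condition $\omega(u_1, T_y M)=0$ means $u_1$ is $\omega$-orthogonal to the hyperplane $T_y M$, hence $u_1 \in (T_y M)^{\omega} = R(y)$ (the symplectic complement of a hyperplane is its characteristic line); combined with $u_1 \in T_x M$ this says $R(y)\subset T_x M$. Symmetrically, $v_1 \in R(x) \cap T_y M$. By Lemma \ref{omega_0_equiv_charc_direction_in_tangent_space}, $R(y)\subset T_x M \iff \omega(\nu_x,\nu_y)=0$, and this is likewise equivalent to $R(x)\subset T_y M$. So a nonzero kernel vector exists exactly when $\omega(\nu_x,\nu_y)=0$, giving the desired equivalence.

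The main obstacle is verifying carefully that a nonzero annihilator $(u_1,v_1)$ genuinely exists when $\omega(\nu_x,\nu_y)=0$, rather than merely that the two coordinate conditions are individually solvable. When $\omega(\nu_x,\nu_y)=0$ one gets $R(y)\subset T_x M$, so picking $u_1$ to span $R(y)$ gives a nonzero vector in $T_x M$ with $\omega(u_1,\cdot)=0$ on $T_y M$; setting $v_1=0$ then yields a genuine nonzero kernel vector $(u_1,0)$, confirming degeneracy. Conversely, if $\omega(\nu_x,\nu_y)\neq 0$ then $R(y)\not\subset T_x M$ and $R(x)\not\subset T_y M$, so both coordinate conditions force $u_1=v_1=0$, showing $\Omega_{(x,y)}$ is nondegenerate. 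I would keep the linear-algebra bookkeeping transparent by relying throughout on the fact that on the $2n$-dimensional symplectic space $\R^{2n}$ the $\omega$-orthogonal complement of the hyperplane $T_y M$ is exactly the characteristic line $R(y)$, which is the single geometric fact that drives the whole computation.
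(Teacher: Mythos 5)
Your proposal is correct and follows essentially the same route as the paper: the degeneracy of $\Omega_{(x,y)}$ is reduced via the explicit formula to the conditions $R(y)\subset T_xM$ and/or $R(x)\subset T_yM$, which Lemma \ref{omega_0_equiv_charc_direction_in_tangent_space} identifies with $\omega(\nu_x,\nu_y)=0$, and the link to $\Phi(x,y)=(y,x)$ comes from the continuous extension together with Lemma \ref{lem:basic_def_sympl_case}. You are in fact slightly more careful than the paper at two points, namely in exhibiting an explicit nonzero kernel vector $(u_1,0)$ when $\omega(\nu_x,\nu_y)=0$ and in arguing the implication $\Phi(x,y)=(y,x)\Rightarrow\omega(\nu_x,\nu_y)=0$ by contradiction, but these are refinements of the same argument rather than a different approach.
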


\begin{proof}
For $(u_1,v_1)\neq0$ we have
\begin{equation}
\begin{aligned}
\Omega_{(x,y)}&\big((u_1,v_1),(u_2,v_2)\big)=0\quad\forall (u_2,v_2)\in T_{(x,y)}M\times M\\
&\Longleftrightarrow \quad \omega(u_2,v_1)=0\;\forall u_2\in T_xM\text{ and }\omega(u_1,v_2)=0\;\forall v_2\in T_yM\\
&\Longleftrightarrow \quad R(x)\subset T_yM \text{ or/and } R(y)\subset T_xM\\
&\Longleftrightarrow \quad \omega(\nu_x,\nu_y)=0\\
&\Longleftrightarrow \quad\Phi(x,y)=(y,x),\\
\end{aligned}
\end{equation}
as claimed. We recall from Lemma \ref{omega_0_equiv_charc_direction_in_tangent_space} that the relation $R(x)\subset T_yM$ is symmetric in $x$ and $y$. Therefore ``or'' and ``and'' in the third line are equivalent.
\end{proof}
\begin{remark}$ $
{\rm
\begin{enumerate} 
\item As in the planar case,t strict convexity gives rise to an involution $M\ni z\mapsto z^*\in M$ characterized by $R(z)=R(z^*)$.
\item  Similarly to polygons in the plane, one can define symplectic billiards in convex polyhedra in symplectic space. For example, it would be interesting to study a simplex in $\C^2$.
\end{enumerate}
}
\end{remark}

\subsubsection{Continuous limit} \label{contlim}

Consider a (normal) billiard trajectory inside a convex hypersurface $M$ that makes very small angles with the hypersurface (a grazing trajectory); as the angles tend to zero, one expects such a trajectory to have a geodesic on $M$ as a limit. One
observes this at the level of generating functions: the chord length $|xy|,\ x,y\in M$, becomes, in the limit $y \to x$, the  function $L(x,v) = |v|$ of the tangent vectors $v \in T_x M$. The extremals of the Lagrangian $L$ are non-parameterized geodesics on $M$.

What happens in the continuous limit with the symplectic billiard? 

Geometrically, one expects the following. Let $Z_0,Z_1,Z_2,Z_3,\ldots$ be an orbit. As the points merge together, the direction of the segment $Z_0 Z_2$ tends to the characteristic direction $R(Z_1)$, and the direction of $Z_1 Z_3$ to $R(Z_2)$. Therefore, in the limit, the even-numbered points lie on a characteristic line, and so do the odd-numbered points; and in the limit, the two characteristic lines merge together. Note however that the direction $Z_0 Z_1$ is not necessarily characteristic, and  the limiting behavior of the directions $Z_j Z_{j+1}$ is not determined.

One observes the same phenomenon at the level of generating functions. The continuous limit of the generating function $\omega(x,y),\ x,y \in M$, is the Lagrangian $L(x,v)=\omega(x,v)$ on the tangent bundle $TM$. The Euler-Lagrange equation with constraints reads
$$
\frac{d L_v}{dt} - L_x = \lambda \nu,
$$
where $t$ is time, $\nu$ is a normal, and $\lambda$ is a Lagrange multiplier. Since $L(x,v)=\langle Jx,v\rangle$, one has
$L_v = Jx, L_x = -Jv$, and the Euler-Lagrange equation reduces to $v = \mu(x,v) J(\nu)$. This is the equation of non-parameterized characteristic line on $M$.

Note however that, due to the degeneracy of the Lagrangian $L$, we end up with a first order differential equation, rather than the second order one.

Perhaps a better way to take a continuous limit is to treat the even-numbered and odd-numbered vertices of a symplectic billiard trajectory separately: points $Z_0,Z_2,\ldots$ converge to one curve $\g_0(t) \subset M$, and points $Z_1,Z_3,\ldots$ to another curve $\g_1(t)\subset M$. In this limit, one obtains the following relation between the curves:
\begin{equation} \label{double}
R(\g_1(t)) \sim \g_0'(t),\ R(\g_0(t)) \sim \g_1'(t),
\end{equation}
where $ a\sim b$ means that the vectors $a$ and $b$ are proportional. Compare to the discussion for symplectic ellipsoids in Section \ref{ellip}.

The problem of finding pairs of curves on $M$ satisfying (\ref{double}) is variational.

\begin{lemma} \label{pairvar}
A pair of closed curves $(\g_0(t),\g_1(t))$ on $M \subset \C^n$, satisfying (\ref{double}), is critical for the functional 
$$
L(\g_0,\g_1)= \int \omega(\g_0'(t),\g_1(t))\ dt.
$$
\end{lemma}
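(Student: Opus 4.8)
The plan is to compute the first variation of the functional
$$
L(\g_0,\g_1)= \int \omega(\g_0'(t),\g_1(t))\ dt
$$
directly, subject to the constraint that both curves lie on $M$, and to show that the resulting Euler--Lagrange equations are exactly the relations \eqref{double}. First I would take variations $\g_0 + \eps\, u$ and $\g_1 + \eps\, w$, where the variation vector fields satisfy the tangency constraints $u(t)\in T_{\g_0(t)}M$ and $w(t)\in T_{\g_1(t)}M$ at each $t$ (this is the correct way to enforce that the varied curves remain on $M$ to first order). Differentiating under the integral sign and using the bilinearity of $\omega$ gives
$$
\frac{d}{d\eps}\Big|_{\eps=0} L = \int \Big[\omega(u'(t),\g_1(t)) + \omega(\g_0'(t),w(t))\Big]\ dt.
$$
The second summand is already in the desired ``pointwise paired against $w$'' form. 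For the first summand I would integrate by parts in $t$; since the curves are closed, the boundary term vanishes, and one obtains $-\int \omega(u(t),\g_1'(t))\ dt = \int \omega(\g_1'(t),u(t))\ dt$.

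Thus the first variation becomes
$$
\frac{d}{d\eps}\Big|_{\eps=0} L = \int \Big[\omega(\g_1'(t),u(t)) + \omega(\g_0'(t),w(t))\Big]\ dt,
$$
and because $u$ and $w$ range independently over all tangent variations, criticality is equivalent to the two conditions: $\omega(\g_1'(t),u)=0$ for all $u\in T_{\g_0(t)}M$, and $\omega(\g_0'(t),w)=0$ for all $w\in T_{\g_1(t)}M$. The next step is to translate these into \eqref{double}. The condition $\omega(\g_1'(t),u)=0$ for every $u\in T_{\g_0(t)}M$ says precisely that $\g_1'(t)$ is $\omega$-orthogonal to the entire tangent hyperplane $T_{\g_0(t)}M$; by the definition of the characteristic direction as $\ker\omega|_{T_{\g_0(t)}M\times T_{\g_0(t)}M}$ (equivalently, since $R(\g_0(t))=\R\cdot J\nu_{\g_0(t)}$ and $\omega(\cdot,\cdot)=\langle J\cdot,\cdot\rangle$), this forces $\g_1'(t)\in R(\g_0(t))$, i.e.\ $R(\g_0(t))\sim \g_1'(t)$. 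Symmetrically, the second condition gives $R(\g_1(t))\sim \g_0'(t)$, which is exactly \eqref{double}.

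The one point requiring genuine care -- and the step I expect to be the main obstacle -- is the handling of the tangency constraint and the identification of $\omega$-orthogonality with the characteristic direction. A priori, $\omega(\g_1'(t),u)=0$ only for $u$ tangent to $M$ (not for all $u\in\C^n$), so one must verify that annihilating the tangent hyperplane $T_{\g_0(t)}M$ under $\omega$ pins down a one-dimensional space, and that this space is the characteristic line $R(\g_0(t))$. This is where I would invoke the nondegeneracy of $\omega$ on $\C^n$: the $\omega$-orthogonal complement of the hyperplane $T_{\g_0(t)}M$ is exactly one-dimensional, and it coincides with $R(\g_0(t))$ by the very definition of the characteristic direction as the kernel of $\omega$ restricted to the tangent space. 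I would also remark that $\g_1'(t)$ automatically lies in $T_{\g_1(t)}M$, and the proportionality in \eqref{double} is understood only up to scale, so no parameterization-dependent normalization enters; the argument is insensitive to the freedom in parameterizing the curves, consistent with the first-order (degenerate Lagrangian) nature of the problem noted just above.
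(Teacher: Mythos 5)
Your proposal is correct and follows essentially the same route as the paper: first variation with tangential variation fields, integration by parts using closedness of the curves, and identification of the $\omega$-orthogonal complement of the tangent hyperplane with the characteristic line $R$. The only cosmetic difference is that the paper first notes $L(\g_0,\g_1)=L(\g_1,\g_0)$ (again by integration by parts) and varies only one curve, invoking symmetry for the other condition, whereas you carry out both variations explicitly.
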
 

\begin{proof} 
Integrating by parts, we see that $L(\g_0,\g_1) = L (\g_1,\g_0)$. Consider a variation of $\g_1$ given by the vector field $v(t)$ on $M$ along $\g_1$. Having
$$
\int \omega(\g_0'(t),v(t))\ dt =0
$$
for all such $v$ is equivalent to $\g_0'(t)$ being symplectically orthogonal to $T_{\g_1(t)}M$ for all $t$, that is, $\g_0'(t) \sim R(\g_1(t))$. Due to symmetry of the functional $L$, the same argument yields $\g_1'(t) \sim R(\g_0(t))$.
\end{proof}
It would be interesting to describe pairs of curves on $M$ satisfying (\ref{double}). A particular solution is $\g_0=\g_1$, a characteristic curve. If $M$ is a unit sphere, then a pair $(\g_0,\g_1)$ of geodesic circles related by $\g_1 = J(\g_0)$ are all other solutions.

\subsection{Ellipsoids} \label{ellip}

\subsubsection{Symplectic billiard and the usual billiard} \label{ssell}

It is well known that the billiard ball map in an ellipsoid is completely integrable, see, e.g., \cite{Ta}. In this section we describe a close relation between the symplectic and the usual billiard in ellipsoids that, in particular, implies complete integrability of the former.

Consider an ellipsoid in $\R^{2n}$ with Darboux coordinates  $x_1,\ldots, x_n, y_1, \ldots, y_n$ and the symplectic structure $\omega_0 = \sum dx_j \wedge dy_j$. Applying a linear symplectic transformation and homothety, we may assume that the ellipsoid is given by the equation
$$
\frac{x_1^2+y_1^2}{a_1} + \frac{x_2^2+y_2^2}{a_2} + \ldots + \frac{x_n^2+y_n^2}{a_n} =1,\qquad a_1,\ldots, a_n>0.
$$
The diagonal linear transformation 
$$
x_j \mapsto \sqrt{a_j} x_j, y_j \mapsto \sqrt{a_j} y_j,\ j=1,\ldots,n,
$$
 takes the ellipsoid to the unit sphere and transforms the symplectic form to $\omega = \sum a_j dx_j \wedge dy_j$. We shall consider the symplectic billiard inside the unit sphere $S^{2n-1}$ defined by this symplectic form. The characteristic direction at the point 
$z\in S^{2n-1}$ is $\R\cdot R(z)$, where the complex linear operator $R:\C^n \to \C^n$ is diagonal with the entries $ia_1^{-1},\ldots, i a_n^{-1}$.

Consider the linear map $R^{-1}: \C^n \to \C^n$. It takes the unit sphere to the ellipsoid $E$ given by the equation
\begin{equation} \label{newell}
 \frac{|w_1|^2}{a_1^2} + \frac{|w_2|^2}{a_2^2} + \ldots + \frac{|w_n|^2}{a_n^2} =1,
\end{equation}
where $w_1,\ldots,w_n$ are complex coordinates in the target space.

\begin{theorem} \label{hodo}
Let $(\ldots, Z_0, Z_1, Z_2,\ldots)$,  be a trajectory of the symplectic billiard map in the unit sphere with respect to the symplectic form $\omega = \sum a_j dx_j \wedge dy_j$. Then a sequence $(\ldots, R^{-1}(Z_0), R^{-1} (Z_2), R^{-1} (Z_4), \ldots)$ is a billiard trajectory in $E$. Conversely, to a billiard trajectory $(\ldots, W_0, W_2, W_4, \ldots)$ in $E$ there corresponds a unique symplectic billiard trajectory $(\ldots, Z_0, Z_1, Z_2,\ldots)$ in $S^{2n-1}$ with $Z_0 = R(W_0), Z_2 = R(W_2)$, etc.
\end{theorem}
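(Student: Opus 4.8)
The plan is to reduce the whole statement to a single linear-algebra identity relating the Euclidean normal of $E$ to the characteristic direction of the sphere, and then to match the two reflection laws vertex by vertex. Throughout, write $J$ for multiplication by $i$ and let $D$ be the complex-linear diagonal operator with entries $a_1,\dots,a_n$, so that $\omega(u,v)=\langle JDu,v\rangle$ and the characteristic operator is $R=D^{-1}J=JD^{-1}$ (entries $ia_j^{-1}$). By the analogue of Lemma \ref{lem:basic_def_sympl_case} for the form $\omega$, together with the definition of the positive phase space $\P$, the symplectic reflection at a vertex $Z_k$ reads $Z_{k+1}-Z_{k-1}=t_k\,R(Z_k)$ with all $t_k$ of one (say positive) sign; this consistency of sign is what will remove any spurious $\pm$ below.

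First I would translate the even steps. Setting $W_{2k}:=R^{-1}(Z_{2k})\in E$, which lies on $E$ because $R^{-1}$ carries $S^{2n-1}$ to the ellipsoid \eqref{newell}, the reflection at the odd vertex $Z_{2k+1}$ gives $Z_{2k+2}-Z_{2k}=t_{2k+1}R(Z_{2k+1})$, and applying $R^{-1}$ yields
\[
W_{2k+2}-W_{2k}=t_{2k+1}\,Z_{2k+1},\qquad t_{2k+1}>0.
\]
Thus each side of the candidate billiard polygon $(\dots,W_0,W_2,W_4,\dots)$ is a positive multiple of the odd sphere point $Z_{2k+1}$, so the unit direction of the side $W_{2k}W_{2k+2}$ is exactly $Z_{2k+1}$. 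The heart of the matter is then the following normal/characteristic identity: the Euclidean outward normal to $E$ at $w$ is proportional to $D^{-2}w$ (the gradient of $\sum|w_j|^2/a_j^2$), and since $R^{-1}=-JD$ one checks
\[
D^{-2}R^{-1}=-R
\]
(entries $a_j^{-2}\cdot(-ia_j)=-ia_j^{-1}$). Evaluating at $w=W_{2k}=R^{-1}(Z_{2k})$, this says the normal to $E$ at $W_{2k}$ is parallel to $R(Z_{2k})$. This is the one computation that makes the statement work.

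Now the reflection laws match. The Birkhoff condition at $W_{2k}$ is that $W_{2k}$ be critical for $|W_{2k-2}-\,\cdot\,|+|\,\cdot\,-W_{2k+2}|$ on $E$, i.e.\ that the difference of the unit incoming and outgoing directions be normal to $E$. By the first step these unit directions are $Z_{2k-1}$ and $Z_{2k+1}$ (the positivity of the $t$'s guarantees the signs), and by the identity above the normal is parallel to $R(Z_{2k})$. Hence the billiard law at $W_{2k}$ is precisely
\[
Z_{2k-1}-Z_{2k+1}\parallel R(Z_{2k}),
\]
which is exactly the symplectic reflection at the even vertex $Z_{2k}$, true by hypothesis. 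For the converse and uniqueness I would reverse this: given a billiard trajectory $(\dots,W_0,W_2,\dots)$ in $E$, set $Z_{2k}:=R(W_{2k})\in S^{2n-1}$ and $Z_{2k+1}:=(W_{2k+2}-W_{2k})/|W_{2k+2}-W_{2k}|$, with the sign fixed by $(Z_{2k},Z_{2k+1})\in\P$; running the two computations backward shows the resulting sequence obeys the symplectic law at both even and odd vertices, while the formulas force every $Z_j$ uniquely.

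I expect the main obstacle to be bookkeeping rather than conceptual: the genuine content is the identity $D^{-2}R^{-1}=-R$, after which the only care needed is to keep the orientation conventions consistent, so that the variational Birkhoff condition reproduces the symplectic reflection with the correct sign and not merely up to an ambiguous $\pm$. The positivity $t_k>0$ supplied by (the $\omega$-version of) Lemma \ref{lem:basic_def_sympl_case} and the positive-phase-space definition of $\P$ are exactly what eliminate that ambiguity.
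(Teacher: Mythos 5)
Your argument is correct and is essentially the paper's own proof: both translate the symplectic reflection law through $R^{-1}$, identify the odd vertices $Z_{2k+1}$ as the unit directions of the sides $W_{2k}W_{2k+2}$, and reduce everything to the observation that the normal to $E$ at $W_{2k}=R^{-1}(Z_{2k})$ is parallel to $R(Z_{2k})$ — your identity $D^{-2}R^{-1}=-R$ is exactly the paper's remark that $R^{2}(W_{2})$ is normal to $E$. The converse is handled the same way in both.
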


\begin{proof}Consider the points $Z_0, Z_2, Z_4$ of a symplectic billiard trajectory. The points $Z_1$ and $Z_3$ are uniquely determined by the symplectic billiard reflection law:
$$
R(Z_1) = t_1 (Z_2-Z_0), R(Z_3) = t_3 (Z_4-Z_2), t_1 > 0, t_3 > 0.
$$
Hence $Z_1 = t_1 R^{-1}(Z_2-Z_0)$, and the normalization $|Z_1|^2=1$ uniquely determines $t_1$; likewise for $t_3$. Thus
$$
Z_1 = \frac{R^{-1}(Z_2-Z_0)}{|R^{-1}(Z_2-Z_0)|},\  Z_3 = \frac{R^{-1}(Z_4-Z_2)}{|R^{-1}(Z_4-Z_2)|}.
$$
The symplectic billiard reflection law also implies that
$$
R(Z_2) = t (Z_3-Z_1) = t \left(\frac{R^{-1}(Z_4-Z_2)}{|R^{-1}(Z_4-Z_2)|} + \frac{R^{-1}(Z_0-Z_2)}{|R^{-1}(Z_0-Z_2)|}\right). 
$$
Set $R^{-1} (Z_j)=W_j,\ j=0,2,4$, and rewrite the last equation as
\begin{equation} \label{usrefl}
R^2 (W_2) =t \left(\frac{W_0 - W_2}{|W_0 - W_2|} + \frac{W_4 - W_2}{|W_4 - W_2|}\right).
\end{equation}
Note that the vector $R^2 (W_2)$ is normal to the ellipsoid $E$ given by (\ref{newell}). Therefore equation (\ref{usrefl}) describes the billiard reflection in $E$ at point $W_2$ that takes  $W_0 W_2$ to $W_2 W_4$, as claimed.

Conversely, given a segment of a billiard trajectory $W_0,W_2,W_4$ in $E$, one defines $Z_j = R(W_j),\ j=0,2,4$, 
$$
Z_1=\frac{W_2-W_0}{|W_2-W_0|},\ Z_3=\frac{W_4-W_2}{|W_4-W_2|}.
$$
Then equation (\ref{usrefl}) implies that $Z_0,\ldots, Z_4$ is a segment of a symplectic billiard trajectory. 
\end{proof}
\subsubsection{Continuous version} \label{contvers}

Let us also present a continuous version of Theorem \ref{hodo}.

\begin{proposition} \label{conhodo}
 Let $A = {\rm diag} (a_1,a_2,\ldots,a_n)$ be a diagonal matrix with real positive entries, and let
 $x(t)$ be a characteristic curve on the ellipsoid $\langle Ax,x\rangle=1$. The linear map $A^{-1/2}: \C^n \to \C^n$ takes this curve to a geodesic curve on the ellipsoid $\langle A^2 y,y\rangle=1$.
\end{proposition}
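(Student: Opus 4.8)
The plan is to reduce the whole statement to one short linear computation that exploits the fact that $A$ commutes with the complex structure $J$. First I would make the characteristic direction on the ellipsoid $\langle Ax,x\rangle=1$ explicit. With defining function $H(x)=\langle Ax,x\rangle$ the tangent space is $T_x=\{v:\langle Ax,v\rangle=0\}$, and since $\omega(u,v)=\langle Ju,v\rangle$, the characteristic line $\ker\omega|_{T_x}$ is spanned by $JAx$; one checks $\langle Ax,JAx\rangle=0$, so this vector is indeed tangent. Hence I parameterize the characteristic curve by the linear flow $x'(t)=JAx(t)$, which stays on the ellipsoid automatically because $\tfrac{d}{dt}\langle Ax,x\rangle=2\langle Ax,JAx\rangle=0$.

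The crucial algebraic observation, which I would isolate before any differentiation, is that $A$ — being a complex-linear operator, i.e.\ multiplication by the positive reals $a_j$ on each coordinate line $\C$ — commutes with $J$, and therefore so does every real power $A^{s}$, in particular $A^{-1/2}$. I would also record that $A^{-1/2}$ is self-adjoint and that the substitution $y=A^{-1/2}x$ carries $\langle Ax,x\rangle=1$ to $\langle A^2y,y\rangle=1$ (using $A^{-1/2}A^2A^{-1/2}=A$), so the map lands on the correct ellipsoid.

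Then I carry out the computation. Writing $x=A^{1/2}y$ and differentiating gives $A^{1/2}y'=x'=JAx=JA^{3/2}y$, and pushing $A^{-1/2}$ through $J$ yields $y'=JAy$. Differentiating once more and again using $AJ=JA$ gives $y''=JAy'=JAJAy=J^2A^2y=-A^2y$. Since $-A^2y$ is a multiple of the Euclidean gradient $2A^2y$ of $\langle A^2y,y\rangle$, the acceleration $y''$ is everywhere normal to the target ellipsoid. Moreover $\langle y',y''\rangle=-\langle JAy,A^2y\rangle=0$ (because $\langle Jw,Aw\rangle=0$ for $w=Ay$, by the same complex-linearity of $A$), so $|y'|$ is constant; a curve parameterized proportionally to arc length whose acceleration is normal to the surface is exactly a geodesic, which completes the proof.

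I do not expect a genuine obstacle: the argument is a one-line linear computation. The single point that must be handled with care — and the reason the statement holds at all — is the commutativity $[A^{1/2},J]=0$; for a general positive symmetric operator on $\R^{2n}$ that is not $J$-complex-linear, the identity $y''=-A^2y$ would pick up tangential terms and the image would fail to be a geodesic. A secondary, purely cosmetic, point concerns the parameterization: if one insists on the unparameterized characteristic foliation and writes $x'=\mu(t)JAx$, the same steps give $y''=(\mu'/\mu)\,y'-\mu^2A^2y$, whose surface-tangential part is still parallel to $y'$, so $y$ remains a geodesic up to reparameterization.
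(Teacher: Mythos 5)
Your proposal is correct and follows essentially the same route as the paper: identify the characteristic direction as $\R\cdot JAx$, use the commutativity $AJ=JA$ to push $A^{-1/2}$ through $J$, and observe that $y''$ lands in the span of $y'$ and the normal $A^2y$ to the target ellipsoid (the paper works directly with the general parameterization $x'=fJAx$, which is exactly your closing remark). The only difference is cosmetic: you first fix the affine parameterization $x'=JAx$ to get the cleaner identity $y''=-A^2y$ before treating the reparameterized case.
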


\begin{proof}If $y=A^{-1/2}x$ and $\langle Ax,x\rangle=1$, then $\langle A^2 y,y\rangle=1$. Note that $Ax$ is a normal to the ellipsoid $\langle Ax,x\rangle=1$ at the point $x$.
If $x(t)$ is a characteristic, then $x' = f JA x$ where $f(t)$ is a non-vanishing function. The matricies $A$ and $J$ commute. Differentiate:
$$
x'' =  f' JA x + f JA x' = \frac{f'}{f} x' - f^2 A^2 x,
$$
hence
$$
y'' = \frac{f'}{f} y' - f^2 A^2 y.
$$
Since $A^2 y$ is a normal to the ellipsoid $\langle A^2 y,y\rangle=1$ at the point $y$, we conclude that $y'' \in {\rm Span} (y',\nu_y)$, that is, $y(t)$ is a geodesic.
\end{proof}
One can reverse the argument:  start with a geodesic on the ellipsoid $\langle A^2 y,y\rangle=1$ and construct a pair of curves on the ellipsoid $\langle Ax,x\rangle=1$ in the relation (\ref{double}). We use the same notations as before.

\begin{proposition} \label{reverse}
Let $y(t)$ be a geodesic on the ellipsoid $\langle A^2 y,y\rangle=1$ satisfying 
$$
y''(t) = \frac{f'(t)}{f(t)} y'(t) +g(t) A^2 y(t),
$$
where $f$ and $g$ are some functions with $f(t)\neq0$ for all $t$. Set 
$$
x(t)=A^{1/2} y(t),\  \bar x(t)= \frac{c}{f(t)} J A^{-1} x'(t).
$$
Then, for a suitable choice of the constant $c$, the pair of curves $(x,\bar x)$ lie on the ellipsoid $\langle Ax,x\rangle=1$ and satisfy (\ref{double}).
\end{proposition}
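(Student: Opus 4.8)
The plan is to verify directly that the pair $(x(t),\bar x(t))$ defined in the statement lies on the ellipsoid $\langle Ax,x\rangle=1$ and satisfies the two proportionality relations in \eqref{double}, namely $R(\bar x)\sim x'$ and $R(x)\sim \bar x'$, where $R$ is the characteristic direction operator on the target ellipsoid. Since this is essentially the reverse computation of Proposition~\ref{conhodo}, I expect the argument to be a sequence of differentiations and substitutions using the given geodesic equation for $y$, together with the fact that $A$ and $J$ commute and $J^2=-\mathrm{Id}$.

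**First I would** check that both curves lie on $\langle Ax,x\rangle=1$. For $x=A^{1/2}y$ this is immediate: $\langle A x,x\rangle = \langle A\cdot A^{1/2}y, A^{1/2}y\rangle = \langle A^2 y,y\rangle =1$. For $\bar x = \tfrac{c}{f}JA^{-1}x'$ I would compute $x' = A^{1/2}y'$, so $\bar x = \tfrac{c}{f}JA^{-1/2}y'$, and then evaluate $\langle A\bar x,\bar x\rangle$. Using that $J$ is orthogonal and $A$-commuting, this becomes $\tfrac{c^2}{f^2}\langle A^{-1}y',y'\rangle$ up to the $J$-factors canceling; differentiating the geodesic constraint $\langle A^2 y,y\rangle=1$ gives $\langle A^2 y,y'\rangle=0$, and a further differentiation combined with the geodesic equation should produce a relation that fixes $\langle A^{-1}y',y'\rangle$ as a constant multiple of $f^2$, so that choosing $c$ appropriately normalizes $\bar x$ onto the ellipsoid. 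This normalization step is where the constant $c$ earns its keep.

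**Next I would** establish the two relations in \eqref{double}. The characteristic direction at a point $p$ of $\langle Ax,x\rangle=1$ is $R(p)\sim J(Ap)$, since the normal is $Ap$ and the characteristic line is its image under $J$. So $R(\bar x)\sim JA\bar x = \tfrac{c}{f}JAJA^{-1}x' = -\tfrac{c}{f}x'$ (using $J A J = -A$ via commutativity and $J^2=-\mathrm{Id}$), which is exactly $R(\bar x)\sim x'$. For the second relation $R(x)\sim \bar x'$, I would compute $\bar x'$ by differentiating $\bar x=\tfrac{c}{f}JA^{-1/2}y'$: the $\tfrac{c}{f}$ factor differentiates to produce a term proportional to $\bar x$ itself (hence to $y'$, hence to $x'$, which is tangent and harmless up to the proportionality), while $JA^{-1/2}y''$ is rewritten using the geodesic equation $y''=\tfrac{f'}{f}y'+gA^2 y$. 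The $\tfrac{f'}{f}y'$ piece cancels against the derivative of the prefactor, leaving $\bar x' \sim JA^{-1/2}\cdot A^2 y = JA^{3/2}y = JA\cdot A^{1/2}y = JAx \sim R(x)$, as required.

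**The main obstacle** I anticipate is not conceptual but bookkeeping: ensuring the terms arising from differentiating the non-constant prefactor $\tfrac{c}{f(t)}$ combine correctly with the $\tfrac{f'}{f}y'$ term of the geodesic equation so that only the characteristic-direction piece survives, and verifying that all leftover tangential contributions are genuinely proportional (not merely ``in the span'') as \eqref{double} demands. The role of the hypothesis $f(t)\neq 0$ is to keep $\bar x$ and these proportionality factors well-defined throughout, and the commutativity of $A$ and $J$ is what makes the operator identities $JAJ=-A$ and $JA^{-1}J^{-1}=A^{-1}$ available at each step.
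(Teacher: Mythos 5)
Your proposal is correct and takes essentially the same route as the paper: one shows that $\langle A\bar x,\bar x\rangle=\tfrac{c^2}{f^2}\,h$ with $h=\langle x',A^{-1}x'\rangle$, that $h'=2\tfrac{f'}{f}h$ (using the geodesic equation and $\langle Ax,x'\rangle=0$) so $h=Cf^2$ and $c=C^{-1/2}$ works, and then verifies $R(\bar x)\sim x'$ from $JAJA^{-1}=-\,\mathrm{Id}$ and $R(x)\sim\bar x'$ from the exact cancellation of the $f'/f$ terms, exactly as you describe. The one slip is in naming the normalizing quantity: it is $\langle x',A^{-1}x'\rangle=\langle y',y'\rangle$, not $\langle A^{-1}y',y'\rangle$ --- the latter is not a constant multiple of $f^2$ in general, since the constraint $\langle A^2y,y\rangle=1$ only gives $\langle A^2y,y'\rangle=0$, not $\langle Ay,y'\rangle=0$.
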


\begin{proof}We know from the previous proof that $x(t)$ lies on the ellipsoid $\langle Ax,x\rangle=1$. Also
\begin{equation} \label{geoeq}
x''(t) = \frac{f'(t)}{f(t)} x'(t) +g(t) A^2 x(t).
\end{equation} 
Let $h(t)=\langle x'(t),A^{-1} x'(t)\rangle$. Then 
$$
h' = 2 \langle x'',A^{-1} x'\rangle = 2 \frac{f'}{f} \langle x',A^{-1} x'\rangle + 2g \langle A^2 x,x'\rangle = 2  \frac{f'}{f} h,
$$
where we used (\ref{geoeq}) and the fact that $Ax$ is orthogonal to $x'$. It follows that $h=C f^2$ where $C$ is a constant. Then
$$
\langle A \bar x, \bar x \rangle = \frac{c^2}{f^2(t)}   \langle x'(t),A^{-1} x'(t)\rangle = c^2 C,
$$
hence, if $c=C^{-1/2}$, we have $\langle A \bar x, \bar x \rangle =1$.

It remains to check (\ref{double}). That $R(\bar x) \sim x$ is clear, since $R = JA$. Finally,
$$
\bar x' = -\frac{cf'}{f^2} J A^{-1} x' + \frac{c}{f(t)} J A^{-1} x'' = -\frac{cf'}{f^2} J A^{-1} x' + \frac{c}{f} J A^{-1} 
\left( \frac{f'}{f} x' +g A^2 x \right) \sim JAx,
$$
as needed.
\end{proof}
Proposition \ref{reverse} seems to indicate that the correct continuous limit of symplectic billiard is a pair of curves satisfying (\ref{double}), as discussed in Section \ref{contlim}.

\begin{remark}
{\rm The curve $\bar y = A^{-1/2} \bar x$ is also a geodesic on the ellipsoid $\langle A^2 y,y\rangle=1$. The  geodesics $y$ and $\bar y$ are related by the composition of $J$ and the skew-hodograph transformation, see section 3 in \cite{MV}. 
}
\end{remark}

\subsubsection{Symplectic billiards and the discrete Neumann system} \label{Neumann}

The discrete Neumann system $(Z_0,Z_1) \mapsto (Z_1,Z_2)$ is a Lagrangian map on the Cartesian square of the unit sphere given by the equation
$$
Z_0 + Z_2 = \lambda A(Z_1),
$$
where $A$ is a self-adjoint linear map and $\lambda$ is a factor  determined by the normalization $|Z_2|=1$, see \cite{MV,Ve1,Ve2}. Symplectic billiard inside the unit sphere is given by a similar equation
$$
Z_2 - Z_0 = t R(Z_1),
$$
where $R$ is an anti self-adjoint linear map and $t$ is a suitable factor. 

Our Theorem \ref{hodo} is an analog of Theorem 6 in \cite{MV} that relates, in a similar way, the discrete Neumann system and the billiard inside an ellipsoid. Thus, similarly to the Neumann system, the symplectic billiard is ``a square root" of the billiard system in an ellipsoid (cf. \cite{Ve2}). Note however that the latter ellipsoid is not generic: its axes are equal pairwise.

\subsubsection{Integrals} \label{ellint}

The symplectic billiard map possesses  a collection of particularly simple integrals described in the following proposition.
For a point $Z_j \in S^{2n-1} \subset \R^{2n}$, write $Z_j=(x_{1j},y_{1j},\ldots,x_{nj},y_{nj})$.

\begin{proposition} \label{easyint}
The following functions are integrals of the symplectic billiard map $(Z_0,Z_1) \mapsto (Z_1,Z_2)$:
$$
I_k (Z_0,Z_1) = x_{k0} x_{k1} + y_{k0} y_{k1},\ k=1,\ldots,n,\ \  {\rm and}\ \ J(Z_0,Z_1) = \langle R(Z_0), Z_1\rangle.
$$
\end{proposition}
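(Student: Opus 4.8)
The plan is to verify both families of conservation laws by a direct computation from the reflection law $Z_2-Z_0 = t\,R(Z_1)$ with $t>0$, exploiting the algebraic features of $R$. Since $R$ is the complex-diagonal operator with purely imaginary entries $ia_1^{-1},\dots,ia_n^{-1}$, it is skew-adjoint for the standard inner product, $\langle Ru,v\rangle=-\langle u,Rv\rangle$, and it is block-diagonal with respect to the splitting $\C^n=\bigoplus_k\C$. First I would record these two facts, together with the normalization $|Z_0|=|Z_1|=|Z_2|=1$ coming from the fact that all three points lie on $S^{2n-1}$. These are the only ingredients needed.

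For the integrals $I_k$, I would write $I_k(Z_0,Z_1)=\langle P_k Z_0,Z_1\rangle$, where $P_k$ denotes the orthogonal projection onto the $k$-th complex line; note that $P_k$ is self-adjoint and idempotent and commutes with $R$ (both are diagonal in the complex coordinates). Substituting $Z_2=Z_0+t\,R(Z_1)$ gives $I_k(Z_1,Z_2)=\langle P_kZ_1,Z_0\rangle+t\,\langle P_kZ_1,R(Z_1)\rangle$. The first summand equals $I_k(Z_0,Z_1)$ by self-adjointness of $P_k$. For the second, setting $w=P_kZ_1$ and using $[P_k,R]=0$ together with $P_k^2=P_k=P_k^{*}$ identifies $\langle P_kZ_1,R(Z_1)\rangle$ with $\langle w,Rw\rangle$, which vanishes by the skew-adjointness of $R$. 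Hence $I_k(Z_1,Z_2)=I_k(Z_0,Z_1)$; I would emphasize that this argument never uses the particular value of $t$.

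For $J$, substituting the reflection law gives $J(Z_1,Z_2)=\langle R(Z_1),Z_2\rangle=\langle R(Z_1),Z_0\rangle+t\,|R(Z_1)|^2$, and skew-adjointness rewrites $\langle R(Z_1),Z_0\rangle=-\langle R(Z_0),Z_1\rangle=-J(Z_0,Z_1)$. Thus the claim reduces to showing $t\,|R(Z_1)|^2=2J(Z_0,Z_1)$. This is where the normalization enters, and it is the one genuinely delicate point: expanding $1=|Z_2|^2=|Z_0+t\,R(Z_1)|^2$ and using $|Z_0|=1$ yields $2t\,\langle Z_0,R(Z_1)\rangle+t^2|R(Z_1)|^2=0$, so (for $t\neq 0$) $t\,|R(Z_1)|^2=-2\langle Z_0,R(Z_1)\rangle$, which equals $2J(Z_0,Z_1)$ after one further use of skew-adjointness. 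Combining the two computations gives $J(Z_1,Z_2)=-J(Z_0,Z_1)+2J(Z_0,Z_1)=J(Z_0,Z_1)$, as desired. The main obstacle is precisely this last step: unlike the $I_k$, the invariance of $J$ hinges on the factor $t$ being pinned down by the constraint $Z_2\in S^{2n-1}$, and a computation that ignores this normalization is left with an uncancelled term.
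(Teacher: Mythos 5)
Your proof is correct and follows essentially the same route as the paper: substitute the reflection law $Z_2=Z_0+t\,R(Z_1)$, use the skew-adjointness/diagonal structure of $R$ to kill the extra term for $I_k$, and use the explicit value of $t$ to verify $J$. The only difference is cosmetic — the paper states the coefficient $t=2\langle R(Z_0),Z_1\rangle/|R(Z_1)|^2$ outright, whereas you rederive it from the normalization $|Z_2|=1$, which is a worthwhile detail to include.
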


\begin{proof}One has
$$
Z_2 = Z_0 + \frac{2 \langle R(Z_0),Z_1\rangle}{|R(Z_1)|^2} R(Z_1).
$$
Substitute this $Z_2$ to $J(Z_1,Z_2)$ and simplify to obtain $J(Z_0,Z_1)$.

Likewise, to show that $I_k$ is an integral it suffices to check that $I_k(R(Z_1),Z_1) =0$. This follows from the fact that $R$ is a diagonal map of $\C^n$ that, up to a factor, multiplies each  coordinate by $i$. 
\end{proof}
In particular, a trajectory $(\ldots, Z_0, Z_1, Z_2, \ldots)$ of the symplectic billiard in a sphere is an equilateral polygonal line.

The integrals $I_k$ occur due to the symmetry of the ellipsoid (or, equivalently, of the operator $R$): $I_k$ corresponds to the rotational symmetry in $k$th coordinate complex line via E. Noether's theorem. 

 Let us explain the billiard origin of the integral $J$.

Consider the billiard system in an ellipsoid $E$ given by the equation $\langle Aw,w\rangle = 1$. The  phase space of the billiard consists of the tangent vectors $(w,v)$ with the foot point $w\in E$ and a unit inward vector $v$. It is known that the function $\langle Aw,v\rangle$ is an integral of the billiard transformation, see, e.g., \cite{Ta}.

Consider the pull-back of the integral $\langle Aw,v\rangle$ under the map $R^{-1}: \C^n \to \C^n$. This is a function of $(Z_0,Z_2)$, and since $Z_2$ is determined by $Z_0$ and $Z_1$ via the symplectic billiard reflection, this is also a function of $(Z_0,Z_1)$, a phase point of the symplectic billiard.

\begin{proposition} \label{intJ}
This function equals $\langle Z_0, R(Z_1)\rangle$.
\end{proposition}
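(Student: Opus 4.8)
\textbf{Proof proposal for Proposition \ref{intJ}.}

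The plan is to express the pull-back integral $\langle A w, v\rangle$ explicitly in terms of the symplectic billiard data $(Z_0, Z_1)$ and recognize the result as $\langle Z_0, R(Z_1)\rangle$. Recall that the map is $R^{-1}$, where $R$ is diagonal over $\C^n$ with entries $ia_1^{-1},\ldots,ia_n^{-1}$, so that $R^2 = -A^2$ as a real linear map (here $A = \mathrm{diag}(a_1^{-1},\ldots,a_n^{-1})$, matching the normalization used in Theorem \ref{hodo}, where $R^2(W) = -A^2 W$ is the normal to $E$). The foot point on $E$ corresponding to the symplectic billiard phase point is $W_0 = R^{-1}(Z_0)$, and the inward unit velocity $v$ is the direction of $W_2 - W_0$, which by the reflection analysis in the proof of Theorem \ref{hodo} equals the unit vector $Z_1 = (W_2 - W_0)/|W_2 - W_0|$. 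Thus $v = Z_1$ and $w = W_0 = R^{-1}(Z_0)$.

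First I would substitute these identifications into the billiard integral. With $A$ now denoting the matrix defining $E$ via $\langle A^2 w, w\rangle = 1$ (so the normal at $w$ is $A^2 w = -R^2 w$), the relevant integral of the billiard in $E$ is $\langle A^2 w, v\rangle$ up to the fixed normalization conventions. Substituting $w = R^{-1}(Z_0)$ and $v = Z_1$ gives
\begin{equation*}
\langle A^2 R^{-1}(Z_0), Z_1\rangle = \langle -R^2 R^{-1}(Z_0), Z_1\rangle = -\langle R(Z_0), Z_1\rangle.
\end{equation*}
Then I would use that $R$ is anti-self-adjoint (being $J$ composed with a positive diagonal real operator, and $J$ is anti-self-adjoint while $A$ is self-adjoint and commutes with $J$), so $\langle R(Z_0), Z_1\rangle = -\langle Z_0, R(Z_1)\rangle$. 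Combining the two sign flips yields exactly $\langle Z_0, R(Z_1)\rangle$, as claimed.

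The main obstacle is bookkeeping rather than conceptual: one must track which operator ($R$, $R^{-1}$, $A$, $A^2$) plays the role of the ``metric'' defining the ellipsoid $E$ versus the characteristic operator on the sphere, and pin down all the normalization constants and signs so that the integral $\langle A w, v\rangle$ from the standard billiard theory maps cleanly onto $\langle Z_0, R(Z_1)\rangle$. In particular one must verify that the overall positive scalar factors coming from the normalizations $|Z_1| = 1$ and $|v|=1$ do not spoil the identification of the two integrals as functions on phase space (they only rescale by a constant, which for an integral is harmless, but one should confirm the constant is $1$ given the stated form). Once the correspondence $v = Z_1$, $w = R^{-1}(Z_0)$ from Theorem \ref{hodo} is invoked, the remainder is the short computation above using anti-self-adjointness of $R$, and the result follows.
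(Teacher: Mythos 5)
Your proposal is correct and follows essentially the same route as the paper: identify $w=R^{-1}(Z_0)$ and $v=Z_1$ via the correspondence of Theorem \ref{hodo}, then use $A=-R^2$ (in the paper's normalization; your $A^2$) together with the anti-self-adjointness of $R$ to get $\langle Aw,v\rangle=\langle R w,R v\rangle=\langle Z_0,R(Z_1)\rangle$. The only difference is notational bookkeeping about which power of the diagonal matrix defines $E$, which you resolve consistently.
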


\begin{proof} 
One has $R^{-1} (Z_0) = w$, that is, $R(w) = Z_0$. Likewise, $R(v)$ is positive-proportional to $Z_2-Z_0$ which, by the symplectic billiard reflection law, is positive-proportional to $R(Z_1)$. Since both $v$ and $Z_1$ are unit, $v=Z_1$. Since $A= -R^2$ and $R$ is anti self-adjoint, we have
$$
\langle Aw,v\rangle = \langle Rw,Rv\rangle = \langle Z_0,R(Z_1)\rangle,
$$
as claimed.
\end{proof}
\subsubsection{Low-period orbits} \label{lowper} 

Let us also mention a property of low-period orbits of the symplectic billiard in an ellipsoid. 

By a coordinate subspace in $\C^n$ we mean a subspace spanned by any number of the complex coordinate lines. As before, we consider symplectic billiard inside the unit sphere, with the characteristic vector  given by a diagonal complex linear operator $R$ with the entries $ia_1^{-1},\ldots, i a_n^{-1}$. Assume that $R$ is generic in the sense that $a_1<a_2<\ldots < a_n$.

\begin{proposition}\label{small}
Let $Z_1,\ldots,Z_k$ be a $k$-periodic symplectic billiard orbit with $k<2n$. If $k$ is even, then the orbit is contained in a coordinate subspace of dimension at most $k$, and if $k$ is odd, in a coordinate subspace of dimension at most $k-1$.
\end{proposition}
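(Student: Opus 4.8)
The plan is to analyze the structure of a $k$-periodic orbit $Z_1,\ldots,Z_k$ using the recursion coming from the symplectic billiard reflection law, and to argue that a short orbit cannot ``fill up'' all the complex coordinate directions. The key relation, established in the proof of Proposition \ref{easyint}, is
\begin{equation*}
Z_{j+1} = Z_{j-1} + \frac{2\langle R(Z_{j-1}),Z_j\rangle}{|R(Z_j)|^2}\,R(Z_j),
\end{equation*}
so each $Z_{j+1}$ lies in the span of $Z_{j-1}$ and $R(Z_j)$. The natural object to control is the complex-linear span $V = \mathrm{Span}_{\C}(Z_1,\ldots,Z_k)$ (or perhaps the real span of the $Z_j$ together with the $R(Z_j)$). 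Since $R$ is diagonal with distinct eigenvalues $ia_1^{-1},\ldots,ia_n^{-1}$, a subspace is a coordinate subspace precisely when it is $R$-invariant; so the first step is to understand how $R$ interacts with this span.

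First I would set up a dimension count. The recursion shows $Z_{j+1}\in\mathrm{Span}(Z_{j-1},R(Z_j))$; unwinding this around the periodic orbit, the real span $W$ of all vectors $Z_j$ and $R(Z_j)$ is built from few generators. The heart of the matter is to show that $W$ is $R$-invariant (hence a coordinate subspace, by the spectral genericity of $R$), and then to bound its dimension in terms of $k$. For the parity distinction, the relevant feature is that the recursion links $Z_{j+1}$ to $Z_{j-1}$, so it naturally propagates within the even-indexed and odd-indexed subsequences separately. When $k$ is even these two subsequences close up independently after $k/2$ steps each; when $k$ is odd they interleave into a single cyclic sequence. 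I expect this is exactly what produces the ``at most $k$'' bound in the even case versus ``at most $k-1$'' in the odd case: in the odd case the single interleaved cycle of length $k$ forces one linear dependence that trims the dimension by one.

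Concretely, I would proceed as follows. Let $V=\mathrm{Span}_{\C}(Z_1,\ldots,Z_k)$. Using $R(Z_j)=(Z_{j+1}-Z_{j-1})/t_j$ from the reflection law, each $R(Z_j)$ lies in $V$, so $V$ is $R$-invariant and therefore a coordinate subspace. It then remains to bound $\dim_{\C} V$. Since there are $k$ vectors $Z_1,\ldots,Z_k$ spanning $V$, trivially $\dim_{\C} V\le k$; the content is the improvement to $k-1$ when $k$ is odd. For that I would look for a linear relation forced by periodicity: summing or alternating the defining relations $Z_{j+1}-Z_{j-1}=t_j R(Z_j)$ around the full cycle, the telescoping $\sum_j (Z_{j+1}-Z_{j-1})$ vanishes, giving $\sum_j t_j R(Z_j)=0$, hence $R(\sum_j t_j Z_j)=0$; as $R$ is invertible this yields $\sum_j t_j Z_j=0$, a nontrivial dependence (the $t_j$ are positive) that drops the dimension by one. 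I would then check that in the even case this same relation splits according to parity into two relations that are already accounted for, so it does not further reduce the bound below $k$, whereas in the odd case it is genuinely new.

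The main obstacle I anticipate is making the parity bookkeeping rigorous: precisely establishing that the decoupling of even and odd subsequences (when $k$ is even) prevents any dimension drop below $k$, while the single global cycle (when $k$ is odd) forces exactly one drop to $k-1$, and confirming that the dependence $\sum_j t_j Z_j=0$ is the \emph{only} constraint one may exploit and that it is nondegenerate (i.e.\ not already implied by $k<2n$ generic position). Handling the boundary/degenerate configurations where some $t_j$ could vanish, and verifying that $V$ is genuinely $R$-invariant rather than merely $R$-stable up to the span technicalities, will require care; everything else should reduce to the linear-algebra consequences of $R$ having distinct eigenvalues.
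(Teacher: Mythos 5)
Your skeleton matches the paper's: take the span $L$ of the orbit, use $Z_{j+1}-Z_{j-1}\in\R\cdot R(Z_j)$ to see that $L$ is $R$-invariant, conclude that $L$ is a coordinate subspace, and count dimensions. But there are two genuine gaps. First, the dimension in the statement is the \emph{real} dimension (that is the only reading under which ``a coordinate subspace is always even-dimensional'' and the $k$ versus $k-1$ dichotomy make sense), so you must work with the real span of $Z_1,\ldots,Z_k$: the complex span of $k$ vectors only bounds the real dimension by $2k$, which is useless here. Once you pass to the real span, your one-line claim ``$R$-invariant $\Rightarrow$ coordinate subspace'' is no longer the standard distinct-eigenvalue fact, because that fact applies to complex subspaces and a real $R$-invariant subspace is not obviously complex. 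This is exactly the step the paper spends effort on: it shows $a_1^{4m}R^{4m}(Z)\to(1,0,\ldots,0)$ to extract the coordinate lines one at a time. (Equivalently: $R^2$ is a real diagonal operator whose eigenspaces are the coordinate complex lines $\C_j$, so $L=\bigoplus_j(L\cap\C_j)$, and each $L\cap\C_j$ is invariant under the rotation $R|_{\C_j}$, hence is $0$ or all of $\C_j$.) Without some such argument your proof does not establish that $L$ is a coordinate subspace, and the genericity $a_1<\cdots<a_n$ is never actually used.

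Second, your parity mechanism is not the right one and is internally confused. The paper's even/odd distinction comes solely from the fact that a coordinate subspace has even real dimension: the real span of $k$ vectors has dimension at most $k$, and if $k$ is odd this automatically improves to $k-1$. You never invoke even-dimensionality. Your telescoping identity is, as it happens, correct: summing $Z_{j+1}-Z_{j-1}=t_jR(Z_j)$ around the cycle gives $R\bigl(\sum_j t_jZ_j\bigr)=0$ and hence $\sum_j t_jZ_j=0$ with all $t_j>0$, a genuine dependence. But this is a single relation among all $k$ vertices and it lowers the span dimension by one for \emph{every} $k$; it does not ``split by parity'' in the even case, and your claim that it is ``already accounted for'' when $k$ is even is not meaningful. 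If you insist on this route, the honest conclusion is $\dim_\R L\le k-1$ for all $k$ (which, combined with the coordinate-subspace structure, actually yields a slightly stronger statement than the proposition), but you would still owe the real-span/coordinate-subspace argument above before any of these dimension counts applies.
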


\begin{proof}Let $L$ be the vector space spanned by $Z_j,\ j=1,\ldots,k$. The law of the symplectic billiard reflection implies that $R(Z_j)$ is proportional to $Z_{j+1}-Z_{j-1}$ for all $j=1,\ldots,k$ (the indices are understood cyclically mod $k$). Therefore $L$ is an invariant subspace of the linear map $R$.

We claim that $L$ is a coordinate subspace. Let $\C_1,\ldots,\C_n$ be the complex coordinate lines, and let $\pi_j$ be the projection of $\C^n$ on $\C_j$. 

Consider the projections $\pi_j(L)$. If $\pi_j(L)=0$ for some $j$, then we may ignore the $j$th coordinate in what follows. In other words, assume that $\pi_j(L)\neq 0$ for all $j$. The claim now is that $L$ is the whole space $\C^n$.

Let $Z=(1,z_2, \ldots,z_n) \in L$. Then $a_1^{4m} R^{4m}(Z) \in L$. In the limit $m\to\infty$, we obtain the basic vector $(1,0,\ldots,0)$, hence $\C_1 \subset L$. Factorize by $\C_1$ and repeat the argument. It implies that $\C_1 + \C_2 \subset L$, and so on. Thus $L = \C^n$, as claimed. 

To finish the proof, note that a coordinate subspace is always even-dimensional.
\end{proof}
Proposition \ref{small} is in perfect agreement with Proposition  7.16 of \cite{DR}: ``if the billiard trajectory within an ellipsoid in $d$-dimensional Euclidean space is periodic with period $n \le d$, then it is placed in one of the $(n - 1)$-dimensional planes of symmetry of the ellipsoid".

\subsubsection{Round sphere} \label{round}

The case of $a_1=a_2=\ldots =a_n=1$, that is, the case when $M=S^{2n-1}$ is the unit sphere, is special. In this section, we describe this case in further detail. 

Let $z_0 z_1$ be the initial segment of a billiard trajectory, $z_0,z_1 \in S^{2n-1}$. The symplectic billiard map $\Phi$ is given by the formula
\begin{equation} \label{form}
z_2 = z_0 + 2 \omega (z_0,z_1) J(z_1),
\end{equation}
where $J$ is multiplication by $i$. The quantity $\omega (z_0,z_1)$ is an integral: $\omega (z_0,z_1) = \omega (z_1,z_2)$, and we denote it simply by $\omega$.

In the phase space $\P$ we have $0 \le \omega$, and also $\omega \le 1$ because $M$ is the unit sphere. Set $\omega = \sin \alpha$ for $0\leq \alpha \leq \pi/2$,
and let 
\begin{equation} \label{roots}
\lambda_1 = e^{i\alpha},\ \lambda_2 = - e^{-i\alpha}.
\end{equation}

The case $\omega=0$ is special: this is the boundary of the phase space, and the orbits are 2-periodic.
The case $\omega=1$ is also special. In this case, the orbit of $\Phi$ is 4-periodic. Indeed, if $\omega (z_0,z_1)=1$ then $z_1=J(z_0)$ and hence one obtains the sequence of points 
$$
z_0 \;\;\;\mapsto\;\;\; z_1= J(z_0) \;\;\;\mapsto\;\;\; z_2= -z_0 \;\;\;\mapsto\;\;\; z_3=- J(z_0) \;\;\;\mapsto\;\;\; z_4=z_0 \;\;\;\mapsto\;\;\; \ldots
$$
The general case is described in the next proposition, where we assume that $\omega < 1$.

\begin{proposition} \label{superint}
One has
\begin{equation} \label{explicit}
z_n = \frac{\lambda_1^{n-1} - \lambda_2^{n-1}}{\lambda_1 - \lambda_2} z_0 + 
\frac{\lambda_1^{n} - \lambda_2^{n}}{\lambda_1 - \lambda_2} z_1.
\end{equation}
The $\Phi$-orbit of a point lies on the union of two circles. The orbit is periodic if $\alpha$ is $\pi$-rational and dense on the two circles otherwise. If $\alpha= 2\pi (p/q)$, where $p/q$ is in the lowest terms, then the period equals $q$ for even $q$, and $2q$ for odd $q$.
\end{proposition}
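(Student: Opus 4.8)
The plan is to recognize the dynamics as a single scalar linear recurrence and then read everything off from its two characteristic roots. Applying \eqref{form} along the orbit and using that $\omega=\omega(z_{n-1},z_n)$ is an integral, every orbit satisfies
\[
z_{n+1}=z_{n-1}+2\omega\,J(z_n),\qquad \omega=\sin\alpha .
\]
Since $J$ is multiplication by $i$ on $\C^n$, this is a constant-coefficient recurrence that decouples coordinatewise, with characteristic equation $\lambda^2-2i\omega\lambda-1=0$. Its roots are $\lambda=i\sin\alpha\pm\cos\alpha$, i.e. exactly $\lambda_1=e^{i\alpha}$ and $\lambda_2=-e^{-i\alpha}$ of \eqref{roots}, and they satisfy the two identities I will use, $\lambda_1\lambda_2=-1$ and $\lambda_1-\lambda_2=2\cos\alpha$. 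Writing $z_n=\lambda_1^nA+\lambda_2^nB$ and imposing the initial conditions gives $A=(z_1-\lambda_2z_0)/(\lambda_1-\lambda_2)$ and $B=(\lambda_1z_0-z_1)/(\lambda_1-\lambda_2)$; collecting the coefficients of $z_0$ and $z_1$ and replacing $\lambda_1\lambda_2$ by $-1$ yields \eqref{explicit}, proving the first assertion.

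For the geometry, the key point is that $A$ and $B$ are Hermitian orthogonal, $h(A,B)=0$, where $h(u,v)=\langle u,v\rangle-i\,\omega(u,v)$; this follows from a short expansion using $|z_0|=|z_1|=1$ and $\operatorname{Im}h(z_0,z_1)=-\omega$ (equivalently, by forcing $|z_{2m}|=1$ for all $m$). Since $|\lambda_1|=|\lambda_2|=1$, the even subsequence is
\[
z_{2m}=e^{2im\alpha}A+e^{-2im\alpha}B=\cos(2m\alpha)\,u+\sin(2m\alpha)\,v,\qquad u=A+B,\ v=J(A-B).
\]
Using $h(A,B)=0$ one checks $\langle u,u\rangle=\langle v,v\rangle=|A|^2+|B|^2=1$ and $\langle u,v\rangle=0$, so the even points lie on a unit circle in $\operatorname{span}(u,v)$; replacing $B$ by $-B$ gives the analogous statement for the odd points, and hence the union of two circles.

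For periodicity, note that when $A,B\neq0$ they are $\C$-linearly independent (being orthogonal), so $z_{n+N}=z_n$ for all $n$ is equivalent to $\lambda_1^N=\lambda_2^N=1$. With $\lambda_1=e^{i\alpha}$ this forces $\alpha\in\pi\Q$; conversely, for $\alpha\notin\pi\Q$, $\lambda_1$ is not a root of unity and Weyl equidistribution of the points $2m\alpha$ modulo $2\pi$ makes the even and odd points dense on their circles. If $\alpha=2\pi p/q$ in lowest terms, then $\lambda_1^N=1\iff q\mid N$, and on such $N$ one has $\lambda_2^N=(-1)^N$, so one additionally needs $N$ even; the least admissible $N$ is $q$ for even $q$ and $2q$ for odd $q$, which is the stated period.

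The main obstacle I anticipate is the circle claim: a priori $e^{2im\alpha}A+e^{-2im\alpha}B$ looks like an ellipse, and the content is that the two counter-rotating complex components assemble into an honest circle precisely because $h(A,B)=0$ makes $(u,v)$ orthonormal with equal radii. One must also dispose of the degenerate stratum where $z_1=e^{\pm i\alpha}z_0$ (so one of $A,B$ vanishes): there the orbit lies on a single circle inside one complex line and its period is $q$, so the period formula is to be read for the generic orbits with $A,B\neq0$.
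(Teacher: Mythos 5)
Your proposal is correct, and its core --- the scalar recurrence $z_{n+1}=z_{n-1}+2\omega J(z_n)$, the characteristic roots (\ref{roots}), the derivation of (\ref{explicit}) via $\lambda_1\lambda_2=-1$, and the period count from $\lambda_1^N=\lambda_2^N=1$ with $\lambda_2^N=(-1)^N$ once $\lambda_1^N=1$ --- is the same as the paper's. Where you genuinely diverge is the two-circles claim. The paper reads it in phase space: it diagonalizes the linear map $\Phi$ on $\C^{2n}$, getting $(u,v)\mapsto(e^{i\alpha}u,-e^{-i\alpha}v)$ in eigencoordinates, and the two circles are the curves $(e^{it}u,\pm e^{-it}v)$ there; no orthogonality is needed because the two eigencomponents sit in different factors. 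You instead prove the stronger, more geometric statement that the even- and odd-indexed impact points themselves lie on round circles in $\C^n$, and for that the Hermitian orthogonality $h(A,B)=0$ is exactly the extra input needed to rule out an ellipse. I checked this identity: it reduces to $e^{-i\alpha}\cdot 2i\sin\alpha-1+e^{-2i\alpha}=0$, independent of $\langle z_0,z_1\rangle$, and then $u=A+B=z_0$, $v=J(A-B)$ are indeed orthonormal. So your version buys a cleaner picture of the actual trajectory in the table at the cost of one computation, while the paper's eigenspace argument is shorter. One small caveat on your closing remark about the degenerate stratum: when $B=0$ the period is the order of $e^{i\alpha}$, namely $q$, as you say; but when $A=0$ the relevant root is $-e^{-i\alpha}$, whose order is $2q$ for odd $q$ and can drop below $q$ for even $q$ (e.g.\ $q=6$, $p=1$ gives order $3$), so the parenthetical ``its period is $q$'' is not literally correct on that half of the stratum. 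The paper likewise only assumes $v\neq0$ and does not treat these exceptional orbits, so this does not affect the comparison of the main arguments.
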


\begin{proof}Equation (\ref{form}) is a second order linear recurrence with constant coefficients that generates the sequence $z_0,z_1,z_2,\ldots$ Its solution is a linear combination of two geometric progressions whose denominators are the roots of the characteristic equation $\lambda^2 - 2i\omega \lambda -1=0$. These roots are distinct, and they are given by the formula
$$
\lambda_{1,2} = i \omega \pm \sqrt{1-\omega^2},
$$
coinciding with  (\ref{roots}). Choosing the coefficients of the two geometric progressions to satisfy the initial conditions, one obtains formula (\ref{explicit}). 

The map $\Phi: (z_0,z_1) \mapsto (z_1,z_2)$ is a complex linear self-map of $\C^{2n}$, it has the eigen-values $\lambda_1$ and $\lambda_2$, each with multiplicity $n$. Writing $(z_0,z_1)$ as a column vector, $\Phi$ has the matrix
$$
\left(
\begin{matrix}
0&E\\
E & 2i\omega E\\
\end{matrix}
\right),
 $$
where each entry is an $n\times n$ block.

Decompose $\C^{2n}$ into the direct sum of the eigen-spaces. Specifically, 
$$
\left(
\begin{matrix}
a \\
b  \\
\end{matrix}
 \right) = 
 \frac{1}{\lambda_2-\lambda_1} \left(
\begin{matrix}
b - \lambda_1 a \\
\lambda_2 b + a  \\
\end{matrix}
 \right) +
 \frac{1}{\lambda_1-\lambda_2} \left(
\begin{matrix}
b - \lambda_2 a \\
\lambda_1 b + a  \\
\end{matrix}
 \right),
$$
where $a$ and $b$ are column vectors in $\C^n$.

Writing a vector accordingly as $(u,v)$, one has
\begin{equation} \label{Phieq}
\Phi: (u,v) \mapsto (e^{i\alpha} u , - e^{-i\alpha} v).
\end{equation} 
The orbit of $(u,v)$ lies on the union of two circles $(e^{it} u ,  e^{-it} v)$ and $(e^{it} u , - e^{-it} v)$, where $t\in \R$. The orbit is finite if $\alpha$ is $\pi$-rational, and dense on the two circles otherwise. Let
$$
\alpha = 2\pi \frac{p}{q},
$$
where $p$ and $q$ are coprime. Assume that $v\neq 0$. If $q$ is even then the orbit closes up after $q$ iterations, but if $q$ is odd, one needs twice as many, due to the alternating sign of the second component in (\ref{Phieq}). 
\end{proof}

\subsection{Periodic orbits} \label{periodic}

Let $M \subset \R^{2n}$ be a smooth, strictly convex, closed hypersurface. In this section we discuss periodic trajectories of the symplectic billiard map in $M$. Given a $k$-periodic trajectory, one can cyclically permute its vertices or reverse their order; accordingly, we  count the orbits of this dihedral group $D_k$ action.

\subsubsection{Existence of periodic orbits of any period} \label{exist}

\begin{theorem} \label{weak}
For every $k\ge 2$, the symplectic billiard map has a $k$-periodic trajectory.
\end{theorem}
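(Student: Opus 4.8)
The plan is to obtain $k$-periodic orbits as critical points of the discrete action given by the total symplectic area. Define
$$
A:M^k\to\R,\qquad A(z_1,\dots,z_k)=\sum_{i=1}^k\omega(z_i,z_{i+1}),
$$
with indices cyclic mod $k$; this is the sum of the generating function $S(z_i,z_{i+1})=\omega(z_i,z_{i+1})$ along the closed polygon, and it equals the (translation-invariant) signed symplectic area of that polygon. For a configuration constrained to $M$, the Lagrange condition for criticality is that the $z_i$-gradient of $A$ be conormal to $M$ at $z_i$; since the differential of $\omega(z_{i-1},z_i)+\omega(z_i,z_{i+1})$ in a direction $v$ is $\omega(z_{i-1}-z_{i+1},v)$, criticality reads $\omega(z_{i-1}-z_{i+1},v)=0$ for all $v\in T_{z_i}M$, i.e. $z_{i+1}-z_{i-1}\in R(z_i)$. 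This is exactly the symplectic billiard reflection law, compare \eqref{def_phi}. Thus any critical point of $A$ at which consecutive vertices are distinct and the reflection is the forward one is a genuine $k$-periodic trajectory.

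The case $k=2$ is degenerate for this functional, since $A\equiv 0$ on $M^2$; here I would instead invoke the involution $z\mapsto z^*$ provided by strict convexity, noting that each pair $(x,x^*)$ satisfies $\Phi(x,x^*)=(x^*,x)$ and is a $2$-periodic orbit, settling that case directly. For $k\ge 3$ I would maximize $A$ over the compact manifold $M^k$; a maximizer exists, and it remains to see that $A_{\max}>0$, so that the maximizer is not the trivial diagonal configuration (on which $A=0$). To exhibit a configuration with $A>0$, choose a generic affine $2$-plane $\Pi$ through an interior point of the body bounded by $M$, on which $\omega$ restricts to a nondegenerate area form; then $\Pi\cap M$ is a strictly convex closed curve, and inscribing in it a convex $k$-gon traversed in the positive sense gives $A$ equal to twice the enclosed $\omega|_\Pi$-area, which is strictly positive. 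Hence $A_{\max}>0$ and the maximizer lies off the diagonal.

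The main obstacle is to upgrade ``critical point of $A$ with $A>0$'' to ``genuine $k$-periodic billiard orbit,'' that is, to exclude degenerate critical configurations in which vertices collapse. Two phenomena must be ruled out: $z_i=z_{i+1}$, which turns the polygon into an effectively shorter orbit, and $z_{i+1}=z_{i-1}$, a backtracking step where the reflection law holds vacuously but $\Phi$ is not applicable. I expect to handle these by a local perturbation argument using strict convexity of $M$: at a maximizer, if two vertices coincide one can split them apart along $M$ in a direction that strictly increases the symplectic area, contradicting maximality. Concretely, if $z_{i-1}=z_{i+1}=p$ then $A$ is independent of $z_i$ and the criticality conditions force both $z_{i-2}$ and $z_{i+2}$ onto the single chord $z_i+R(p)$, which by strict convexity meets $M$ in only two points; such a configuration is not maximal. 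Once all vertices are distinct, Lemma \ref{lem:basic_def_sympl_case} (the line $z_{i-1}+R(z_i)$ meets $M$ in exactly the two points $z_{i-1}$ and $z_{i+1}$) guarantees that each step is the actual billiard reflection, while positivity of $A$ pins down the consistent forward orientation. Verifying this nondegeneracy cleanly and uniformly in $k$ is the part I expect to require the most care.
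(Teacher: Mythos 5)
Your overall strategy coincides with the paper's: maximize the total symplectic area $\sum_i\omega(z_i,z_{i+1})$ over inscribed $k$-gons, read off the reflection law $z_{i+1}-z_{i-1}\in R(z_i)$ from the constrained criticality condition, and rule out degenerate maximizers. The Euler--Lagrange computation, the separate treatment of $k=2$ via the involution $z\mapsto z^*$, and the positivity of the maximum (via a symplectic $2$-plane section) are all fine. The gap is exactly where you flag it: the exclusion of the backtracking degeneracy $z_{i+1}=z_{i-1}$ is not actually proved. Your ``concrete'' argument only shows that criticality places $z_{i-2}$ and $z_{i+2}$ on the line $z_i+R(p)$, hence in the two-point set $(z_i+R(p))\cap M$; no contradiction with maximality follows from that, and the general claim that coincident vertices ``can be split apart in a direction that strictly increases the area'' is asserted rather than established. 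Since this is the only step separating ``critical point of the area functional'' from ``genuine orbit,'' the proof is incomplete as written.

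The paper closes this with a short global argument you could adopt verbatim. If $z_{i+1}=z_{i-1}$, the two adjacent terms $\omega(z_{i-1},z_i)+\omega(z_i,z_{i+1})$ cancel, so the value of the functional equals the symplectic area of the $(k-2)$-gon obtained by deleting $z_i$ and $z_{i+1}$. It therefore suffices to show that the maximum over $k$-gons strictly exceeds the area of every inscribed polygon with fewer vertices, and this follows from an insertion lemma: given any side $ac$ of an inscribed polygon, one can choose $b\in M$ with $\omega(a,b)+\omega(b,c)>\omega(a,c)$, i.e.\ with the triangle $abc$ of positive symplectic area, by intersecting $M$ with an affine symplectic $2$-plane through the chord $ac$ and picking $b$ appropriately on the resulting convex curve. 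This is the same device you already use to show the maximum is positive, applied once more; with it, every degenerate configuration is strictly beaten by an honest $k$-gon, so the maximizer is nondegenerate and hence a genuine $k$-periodic trajectory. The same insertion argument also disposes of the coincidence $z_i=z_{i+1}$, since such a polygon again has the area of one with fewer vertices.
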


If $k$ is not prime, we do not exclude the case that this trajectory may be multiple, that is, a lower-periodic trajectory, traversed several times.

\begin{proof}
A periodic trajectory is a critical point of the symplectic area function $F(z_1,\ldots,z_k)=\sum_{i=1}^k \omega(z_i,z_{i+1})$ on $k$-gons inscribed in $M$. Due to compactness, this function attains maximum. Let us show that the respective critical point is a genuine periodic trajectory of the symplectic billiard map.

Let $z_1,\ldots,z_k$ be a critical polygon for $F(z_1,\ldots,z_k)=\sum_{i=1}^k \omega(z_i,z_{i+1})$. Then
$
\omega(z_{i-1},v)+\omega(v,z_{i+1}) =0\ {\rm for\ all}\ v\in T_{z_i} M,\ i=1,\ldots,n,
$
that is, $\omega(v,z_{i+1}-z_{i-1}) =0$. 
If $z_{i+1} \neq z_{i-1}$ then $z_{i+1}-z_{i-1} \in R(z_i)$, as the symplectic billiard map requires.
 A problem arises if $z_{i+1} = z_{i-1}$.

Let us show that if $z_{i+1} = z_{i-1}$ then the value of $F(z_1,\ldots,z_k)$ is not maximal. Indeed, in this case the two terms, $\omega(z_{i-1},z_i)$ and $\omega(z_i,z_{i+1})$ cancel each other, and the function $F$ is the symplectic area of the $(k-2)$-gon $z_1,\ldots,z_{i-1},z_{i+2},\ldots,z_k$. Thus it suffices to show that the maximum of the symplectic area $F$ is attained on non-degenerate $k$-gons (and not on polygons with fewer sides).

To do this we show that, given an inscribed polygon $P$, one can add to it one vertex  (and thus two vertices) so that the symplectic area  increases. Indeed, let $ac$ be a side of $P$. We want to find a point $b\in M$ so that $\omega (a,b) + \omega(b,c) > \omega (a,c)$. This is equivalent to saying that the symplectic area of the triangle $abc$ is positive. To achieve this, take an affine symplectic plane through $ac$ and choose point $b$ appropriately on its  intersection curve with $M$.
\end{proof}

\subsubsection{Periods three and four} \label{threeand4}

The result of Theorem \ref{weak} is quite weak: we believe, the actual number of periodic orbits is much larger (see, e.g., \cite{FT} for the usual multi-dimensional billiards). The next theorem concerns small periods.

\begin{theorem} \label{smallper}
For every $M$ as above, the number of 3-periodic symplectic billiard trajectories is not less than $2n$.  The same lower bound holds for the number of 4-periodic trajectories.
\end{theorem}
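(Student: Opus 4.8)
The plan is to count critical points of the symplectic area functional $F(z_1,\ldots,z_k)=\sum_{i=1}^k\omega(z_i,z_{i+1})$ on the space of inscribed $k$-gons, for $k=3$ and $k=4$, using Lusternik--Schnirelmann / Morse-theoretic lower bounds. By Theorem \ref{weak} the critical points of $F$ are genuine billiard trajectories (the degenerate configurations with $z_{i+1}=z_{i-1}$ are not maxima, and one expects the same degeneracy-avoidance to hold at all the critical points we need). The key point is that the true domain of the functional, once we quotient by the $D_k$-action, is a nice closed manifold, and the lower bound $2n$ will come from its topology. First I would use the Gauss map to identify $M$ with $S^{2n-1}$ and replace the configuration space of inscribed $k$-gons by $(S^{2n-1})^k$ (modulo the diagonal and the dihedral symmetry). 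The aim is to reduce the problem to a symmetric functional on a sphere-like space whose equivariant topology forces at least $2n$ critical orbits.

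The cleanest route exploits the complex-linear structure. For $k=3$, consider the functional on triples, and for $k=4$ on quadruples, of unit vectors in $\C^n$; the antisymmetric form $\omega$ and the operator $R=J$ (or its diagonal analog) make $F$ into a Hermitian-type quadratic form in the configuration. The next step is to diagonalize: writing the $k$-gon through a discrete Fourier transform in the cyclic index $i$, the functional $F$ decouples into a sum of Hermitian forms acting on the $\C^n$-valued Fourier modes, and the critical manifolds correspond to eigenspaces of $R$ paired against the $k$-th roots of unity. Because $R$ acts on $\C^n$ and the relevant constrained critical set is built from complex lines, the natural symmetry group acting on the critical set contains $S^1$ (phase rotation) and the weighted torus coming from the eigenvalues $ia_j^{-1}$. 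This is exactly the mechanism that produces the factor $2n=\dim_\R\C^n$: an $S^1$-invariant functional on (a space fibered by) $\C^n\setminus\{0\}$, i.e. effectively on $\CP^{n-1}$, has cup-length forcing $n$ critical orbits, and the reversal part of $D_k$ together with the real dimension count doubles this to $2n$.

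Concretely, I would (i) set up $F$ as an equivariant functional on the compact manifold of non-degenerate inscribed $k$-gons modulo $D_k$; (ii) show non-degeneracy of the relevant critical submanifolds, or at least that $F$ is a $D_k\times S^1$-invariant Morse--Bott function; (iii) invoke the Lusternik--Schnirelmann estimate that the number of critical orbits is bounded below by the cup-length of the quotient, computing that this cup-length is $2n$ for the complex-projective-type quotient arising here. For the round sphere case the explicit dynamics of Proposition \ref{superint} gives a model: the $3$- and $4$-periodic orbits there live in complex lines, confirming both the count and the coordinate-subspace phenomenon of Proposition \ref{small}. The strict-convexity and genericity can be handled by a perturbation argument, deforming $M$ to the sphere and using that the Morse inequalities / LS category bound are stable under homotopy.

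\textbf{Main obstacle.} The hard part will be controlling the degenerate and multiple critical points: the functional $F$ is only Morse--Bott (whole critical circles appear because of the $S^1$-symmetry), and one must rule out that the $2n$ topologically-forced critical orbits collapse onto degenerate polygons (those with a repeated or antipodal vertex) where $F$ fails to encode an honest billiard trajectory. Equivalently, I expect the delicate step to be verifying that the equivariant cup-length lower bound is realized by \emph{geometric} $3$- and $4$-periodic orbits rather than by spurious critical configurations, and that strict convexity (as opposed to roundness) does not destroy the nondegeneracy needed to keep these critical orbits isolated after passing to the $D_k$-quotient.
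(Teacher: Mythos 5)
Your proposal identifies the right general strategy (variational characterization plus a Lusternik--Schnirelmann count), but the two ideas that actually make the paper's argument work are missing, and several of your concrete steps would fail. For $k=4$ the paper does \emph{not} work on the space of all inscribed quadrilaterals with the cyclic functional $F$. It first proves the structural fact that any 4-periodic orbit satisfies $z_3=z_1^*$ and $z_4=z_2^*$ (opposite points of the involution given by parallel tangent hyperplanes), and accordingly replaces $F$ by the function $\omega(z_1-z_1^*,z_2-z_2^*)$ on \emph{pairs of oriented affine diameters}. Via the diffeomorphism $z\mapsto (z-z^*)/|z-z^*|$ (Lemma \ref{affdiff}) this space is identified with $S^{2n-1}\times S^{2n-1}$; the sublevel set $\{F\ge\eps\}$ excises exactly the degenerate configurations $D_1=\pm D_2$ (your ``main obstacle''), carries a free $\Z_4$-action, has inward-pointing gradient on its boundary, and retracts equivariantly onto the set of complex 2-frames $(e_1,Je_1)$, i.e.\ onto $S^{2n-1}$ with the $\Z_4$-action generated by $J$. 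The bound $2n$ is then ${\rm cat}\bigl(S^{2n-1}/\Z_4\bigr)=2n$ (Krasnoselski, or via the covering $\RP^{2n-1}\to S^{2n-1}/\Z_4$). Your proposed mechanism --- cup-length of $\CP^{n-1}$ giving $n$, ``doubled by the reversal part of $D_k$'' --- is not a valid argument and does not even name the right space: the relevant quotient is a lens space, not a projective space, and the factor $2n$ comes from its category, not from counting symmetries.

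Second, the Fourier diagonalization and the $S^1$-equivariance you invoke exist only for the round sphere (where $F$ is a quadratic form and Proposition \ref{superint} applies); for a general strictly convex $M$ there is no $S^1$-action, no decoupling into Hermitian modes, and no reason for $F$ to be Morse--Bott. The suggested repair --- deform $M$ to the sphere and use ``stability of the Morse/LS bounds under homotopy'' --- does not work: the number of critical points is not invariant under deformation of the functional, and the LS bound must be applied to the configuration space of the \emph{given} $M$ with the degenerate polygons removed; establishing that this space has the homotopy type of the lens space is precisely what the affine-diameter reduction accomplishes. Finally, for $k=3$ the paper does not redo the topology at all: it observes that the functional coincides with the one already analyzed in \cite{Ta2} for 3-periodic outer billiard orbits (the midpoints of such a triangle form a 3-periodic symplectic billiard orbit), and imports the bound from there.
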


\begin{proof}The case $k=3$ is contained in \cite{Ta2} where 3-periodic trajectories of the outer billiard are studied. The function whose critical points are these 3-periodic trajectories is the same: it is the symplectic area of an inscribed triangle. This is also clear geometrically: if $ABC$ is a 3-periodic trajectory of the outer billiard, then the midpoints of the sides of the triangle $ABC$ form a 3-periodic trajectory of the symplectic billiard. 

Let us consider the case $k=4$. 

Let $z_1,z_2,z_3,z_4$ be a 4-periodic orbit. Then $z_3-z_1 \in R(z_2)$ and $z_3-z_1 \in R(z_4)$, and likewise, $z_4-z_2 \in R(z_3)$ and $z_4-z_2 \in R(z_1)$. It follows that $R(z_2)=R(z_4)$ and $R(z_1) = R(z_3)$. Using strict convexity of $M$, we conclude that $z_4=z_2^*$ and $z_3=z_1^*$, where the involution $z \mapsto z^*$ is as before: the tangent hyperplanes to $M$ at $z$ and $z^*$ are parallel.

The (oriented) chords $z^* z$ are called affine diameters of $M$. The observation made in the preceding paragraph suggests to consider the following function of a pair of oriented affine diameters
\begin{equation} \label{newfunct}
\omega(z_1-z_1^*,z_2-z_2^*) = \omega(z_1,z_2) + \omega(z_2,z_1^*) + \omega (z_1^*,z_2^*) + \omega (z_2^*,z_1).
\end{equation}
We shall show that the critical points of this function are 4-periodic orbits of the symplectic billiard. We start with a technical statement.

\begin{lemma} \label{affdiff}
The map $\varphi: M \to S^{2n-1}$ to the unit sphere, given by the formula
$$
z \mapsto \frac{z-z^*}{|z-z^*|},
$$
is a diffeomorphism. 
\end{lemma}

\begin{proof}One has the following characterization of affine diameters: a chord of a convex body is its affine diameter if and only if it is a longest chord of the body in a given direction, see \cite{So}. Since $M$ is strictly convex, for every direction $v \in S^{2n-1}$, there is a unique affine diameter $z z^*$. The map $v \mapsto z$ is inverse of the map $\varphi$, and this map is smooth. Thus the smooth map $\varphi$ has a smooth inverse map, that is, $\varphi$ is a diffeomorphism.
\end{proof}





Next we consider critical points of the function (\ref{newfunct}).

\begin{lemma} \label{crit}
The critical points of the function $\omega(z_1-z_1^*,z_2-z_2^*)$ are 4-periodic orbits of the symplectic billiard.
\end{lemma}

\begin{proof}Let $z\in M$ and let $v\in T_z M$ be a tangent vector. Denote by $v^* \in T_{z^*} M$ the image of $v$ under the differential of the involution $z \mapsto z^*$.

Let $(z_1,z_2)$ be a critical point of the function $\omega(z_1-z_1^*,z_2-z_2^*)$. Then, for every $v\in T_{z_1} M$, one has $\omega(v-v^*,z_2-z_2^*)=0$. 

Note that $\psi$, composed with normalization to unit vectors, is the map $\varphi$ of Lemma \ref{affdiff}, that is, a diffeomorphism. Hence the map $d\psi(z)v=v-v^*:T_zM\to\R^{2n}$ is an injection.


It follows that the vector $z_2-z_2^*$ is symplectically orthogonal to the hyperplane $T_{z_1} M$, that is, $z_2-z_2^* \in R(z_1)=R(z_1^*)$. The same argument shows that $z_1-z_1^* \in R(z_2)=R(z_2^*)$. Hence $z_1 z_2 z_1^* z_2^*$ is an orbit of the symplectic billiard in $M$.
\end{proof}

Let ${\mathcal D}$ be the set of pairs of oriented affine diameters of $M$. One has an action of the group $\Z_4$ on this set:
$$
(D_1,D_2) \mapsto (D_2,-D_1) \mapsto (-D_1,-D_2) \mapsto (-D_2, D_1) \mapsto (D_1,D_2).
$$
Let $F(D_1,D_2)$ be the function (\ref{newfunct}). This function is $\Z_4$-invariant. 

Let $U \subset {\mathcal D}$ be the manifold with boundary given by the inequality $F(D_1,D_2) \ge \eps$ for a sufficiently small generic positive $\eps$. The gradient of the function $F$ has the inward direction on the boundary of $U$, therefore the usual Morse-Lusternik-Schnirelman inequalities for the number of critical points apply. 

Note that if $(D_1,D_2) \in U$, then $D_1 \neq \pm D_2$, and the action of $\Z_4$ on $U$ is free. 
We need to describe  the topology of the quotient space $U/\Z_4$. We claim that it is homotopically equivalent to the lens space $L = S^{2n-1}/\Z_4$, where $\Z_4$ acts on the unit sphere in $\C^n$ by
$(z_1,\ldots,z_n) \mapsto (iz_1,\ldots,iz_n)$.

Let $V$ be the set of pairs of unit vectors $(e_1,e_2)$ with $\omega(e_1,e_2) >0$. Using Lemma \ref{affdiff}, we normalize $D_1$ and $D_2$ to unit vectors. Thus, at the first step, we get a $\Z_4$-equivariant retraction of $U$ to $V$.

Next we want to retract $V$ to the set of complex 2-frames $(e_1,e_2)$ satisfying $e_2=J e_1$. To this end, consider the function $\omega(e_1,e_2)$ on $V$. 

We claim that the critical points of this function are complex  frames with $e_2= J e_1$. Indeed, let $(e_1,e_2)$ be a critical point. Then, for every $v \in T_{e_1} S^{2n-1}$, one has $\omega(v,e_2) =0$. Hence $e_2 = \pm J e_1$. The condition $\omega(e_1,e_2) > 0$ excludes the minus sign. 

Thus the gradient of the function $\omega(e_1,e_2)$ retracts $V$   to the set of complex 2-frames. This set is $S^{2n-1}$ with the action of $\Z_4$ given by $J$.

It remains to use the Lusternik-Schnirelman lower bound for the number of critical points given by the category of the lens space $L$. It is known that ${\rm cat}(L)=2n$ (Kransnoselski  \cite{Kr}; alternatively, one can use the 2-fold covering $\RP^{2n-1} \to L$ that implies
$
{\rm cat}(L) \ge {\rm cat} (\RP^{2n-1}) = 2n,
$
where the inequality follows from the homotopy lifting property, see \cite{Ja}). 
This completes the proof of the theorem.
\end{proof}

\end{document}